\numberwithin{equation}{section}
 \newtheorem{thm}{Theorem}[section]
 \newtheorem{lem}[thm]{Lemma}
 \newtheorem{exam}[thm]{Example}
 \newtheorem{prop}[thm]{Proposition}
 \newtheorem{cor}[thm]{Corollary}
 \newtheorem{rem}[thm]{Remark}
 \def\Hol{\mathop{\rm Hol}\nolimits}
 \def\Id{\mathop{\rm Id}\nolimits}
\newcommand{\C}{{\mathbb C}}   
 \newcommand{\R}{{\mathbb R}}           
\newcommand{\N}{{\mathbb N}}        
\newcommand{\Z}{{\mathbb Z}}          
\newcommand{\Q}{{\mathbb Q}}     
\newcommand{\D}{{\mathbb D}}   
\newcommand{\T}{{\mathbb T}}
\author{W. Arendt}
\address{Wolfgang Arendt, Institute of Applied Analysis, University of Ulm. Helmholtzstr. 18, D-89069 Ulm (Germany)} 
\email{wolfgang.arendt@uni-ulm.de}
\author{E. Bernard}
\address{Eddy Bernard,  Universit\'e Gustave Eiffel, LAMA, (UMR 8050), UPEM, UPEC, CNRS, F-77454, Marne-la-Vallée (France)}
\email{eddy.bernard@univ-eiffel.fr}
\author{B. C\'elari\`es }
\address{Benjamin C\'elari\`es,  Universit\'e Gustave Eiffel, LAMA, (UMR 8050), UPEM, UPEC, CNRS, F-77454, Marne-la-Vallée (France)}
\email{benjamin.celaries@univ-eiffel.fr}
\author{I. Chalendar}
\address{Isabelle Chalendar,  Universit\'e Gustave Eiffel, LAMA, (UMR 8050), UPEM, UPEC, CNRS, F-77454, Marne-la-Vallée (France)}
\email{isabelle.chalendar@univ-eiffel.fr}
\title[Weighted composition operators induced by rotations]{Spectral properties of weighted composition operators on $\Hol(\D)$ induced by rotations }
\keywords{weighted composition operator, rotation, spectrum, holomorphic functions, Hardy space, disc algebra, Waelbroeck spectrum, diophantine numbers}
\subjclass[2010]{47A10, 30H05, 30J99, 11Jxx}
\begin{document}	
\begin{abstract} 
	In this article we study the spectrum $\sigma(T)$ and \textit{Waelbroeck spectrum} $\sigma_W(T)$ of a weighted composition operator $T$ induced by a rotation  on $\Hol(\D)$ and given by
	$$Tf(z)=m(z)f(\beta z) \ \ \ (z\in \D)$$
	\\
	where $m\in \Hol(\D)$, $\beta\in \C$, $|\beta | = 1$. If $\beta^n\neq 1$ for all $n\in \N$ we show that $\sigma_W(T)$ is a disc if $m(z_0)=0$ for some $z_0\in \D$ and it is the circle $\{\lambda\in \C :  |\lambda |=|m(0)|\}$ if $m(z)\neq 0$ for all $z\in \D$. We find examples of $m\in A(\D)$ (the disc algebra) such that $\lambda\Id-T$ is invertible in $\Hol(\D)$ (the Fr\'echet space of all holomorphic functions on $\D$), but $(\lambda\Id-T)^{-1}A(\D)\not\subset A(\D)$. Inspired by Bonet \cite{Bonet} we show that $\{\beta^n  :  n\in \N\}\subset \sigma(T)\neq \T$ when the weight is $m\equiv 1$ and $\beta$ a diophantine number. This shows that the spectrum  is not closed in general.
 \end{abstract}	  
\maketitle
    
\section{Introduction}\label{sec:1}
A most popular and successful subject in operator theory is the spectral theory of composition operators. We refer to the monographs \cite{CM95} by Cowen and Mc Cluer and \cite{Sh93} by Shapiro for a comprehensive presentation. In these two books as well as in the vast literature on this subject, these operators are considered on Banach spaces of analytic functions such as the disc algebra $A(\D)$ or the Hardy spaces $H^p(\D)$. More generally, one may multiply by a weight  and consider weighted composition operators as in the classical article \cite{Ka78} by Kamowitz or more recent contributions by Bourdon \cite{Bo12}, Bonet et al. \cite{BGL08}, Chalendar et al. \cite{CGP15}, Galindo et al. \cite{GLW20}, Hyv\"arinen et al. \cite{HLN13}.

They all consider these operators on a Banach space $X$ which is continuously injected in the Fr\'echet space $\Hol(\D)$ of all holomorphic functions on the open unit disc $\D$. The subject of this paper is to study the spectrum directly on the Fr\'echet space $\Hol(\D)$.    In a previous article \cite{ACC20} this had already been done for the pure composition operator $f\mapsto f\circ \varphi$, where $\varphi:\D\to\D$ has a fixed point in the interior and is not an automorphism. Here we consider the case where $\varphi$ is an elliptic automorphism with a  fixed point  in the interior. Due to a similarity transform (see Section~\ref{sec:8} below) we may take a rotation as $\varphi$. Thus our setting is the following. We take $\beta\in\C$ such that $|\beta|=1$ and $m\in\Hol(\D)$ to study spectral properties of the operator 
\[  T:\Hol(\D)  \to \Hol(\D) \mbox{ given by }Tf(z)=m(z)f(\beta z),\,\,\,\, z\in\D.   \]
It turns out that this leads to quite a variety of results depending on the case where $\beta$ is \emph{periodic} (i.e. $\beta^N=1$ for some integer $N\geq 2$) or \emph{aperiodic} (i.e. $\beta^n\neq 1$ for all $n\in\N$), and on properties of $m$. At first we study the eigenvalues (Section~\ref{sec:3}).  We can describe those completely in the periodic as well as the aperiodic case and also determine the eigenspaces.

Following an idea of Bonet \cite{Bonet}  we then consider an aperiodic $\beta$ of the form $\beta=e^{2i\pi\xi}$ where $\xi$ is a diophantine number. For the weight $m\equiv 1$ we show that 
\[  \{  \beta^n:n\in\N_0\}   \subset \sigma(T)\subset \T\setminus \{  e^{2i\pi r}:r\in\Q, e^{2i\pi r}\neq 1 \}, \]
where $\sigma(T):=\{   \lambda\in\C:\lambda\Id -T\mbox{ is not bijective}\}$ denotes the spectrum of $T$ and where $\N_0=\N\cup\{0\}$. Thus $\overline{\sigma(T)}=\T:=\{\lambda\in\C:|\lambda|=1\}$ and $\overline{\T\setminus \sigma(T)}=\T$. This case already demonstrates how bad spectral behaviour in Fr\'echet space can be: the spectrum may not be closed and the resolvent may not be continuous. For this reason, as in the specialized literature (see e.g. the monograph \cite{Va82} by Vasilescu), we consider the \textit{Waelbroeck spectrum} $\sigma_W(T)$ and its complement, the \textit{Waelbroeck resolvent set} $\rho_W(T)$ of $T$. It can be defined in arbitrary Fr\'echet spaces but we prefer to work only in $\Hol(\D)$ using advantages of this special space. Here we may define 
\begin{eqnarray*}
	   \rho_W(T) &  = &  \{  \lambda\in\rho(T):\exists \delta>0\mbox{ such that }D(\lambda,\delta)\subset \rho(T) \\
 &  & 	   \mbox { and }\sup_{\mu\in D(\lambda,\delta)}\sup_{|z|\leq r}| ((\mu \Id-T)^{-1}f)(z)|<\infty \\
  &  & \mbox{ for all }f\in\Hol (\D),r<1  \}
\end{eqnarray*}	             

This set is open, and so its complement $\sigma_W(T)$ is closed. The most interesting is the aperiodic case. There we consider two cases.   If $m(z)\neq 0$ for all $z\in\D$, then 
\[\sigma_W(T)=\{\lambda\in\C :|\lambda|=|m(0)|\}.\]  
Much more difficult is the case where $m$ has a zero in $\D$. Our main result, Theorem~\ref{th:7.5}, shows that then $\sigma_W(T)$ is a disc. In fact, let
\[    M_r =\exp \left(   \frac{1}{2\pi}\int_0^{2\pi} \log |m(re^{it})|dt   \right)   \]
for $0<r<1$, and $M_1:=\sup_{0<r<1}M_r\in[0,\infty]$. Then 
\[    \sigma_W (T)=\{    \lambda \in\C:|\lambda|\leq M_1\}\]
if $M_1<\infty$ and $\sigma_W(T)=\C$ otherwise. 

Before proving this result, we turn to Banach spaces first. In Section~\ref{sec:5} we consider the restriction $T_X$ of $T$ to $X=A(\D)$ (requiring $m\in A(\D)$) and also $X=H^p(\D)$, for example. 
We show that in the aperiodic case, if $m\in A(\D)$ has a zero in $\overline{\D}$, then 
\[  \sigma(T_{A(\D)}) =\{  \lambda\in\C : |\lambda|\leq M^*\} ,\]
where $M^*=\frac{1}{2\pi}\int_0^{2\pi} \log |m(e^{it})|dt .$ This result is given by Kamowitz \cite[Theorem 4.8 and 4.9]{Ka78} with quite complicated, and actually delicate arguments. Here we give a very simple proof 
based on spectral decomposition. And it is also this spectral decomposition which leads to a proof in the case of $\Hol(\D)$ in Section~\ref{sec:7}. 

We devote a special section, Section~\ref{sec:6}, to establish the spectral decomposition theorem in $\Hol(\D)$ and also to describe the \textit{Waelbroeck spectrum} for arbitrary linear operators on the space $\Hol(\D)$. 

It is worth it to compare the results on $\Hol(\D)$ and the spaces $X=A(\D),H^p(\D)$. In fact, it can happen that for $m\in A(\D)$, $M_1<M^*$. Then for $M_1<|\lambda|\leq M^*$, $\lambda\in \rho_W(T)$ but $\lambda\in\sigma(T_X)$ for $X=A(\D)$ or $H^p(\D)$, $1\leq p\leq \infty$.  This means that $(\lambda \Id - T)^{-1} X\not\subset X $ for any such $\lambda$ and any such space.            
\section{Rotational invariance of the spectrum}\label{sec:2}
Let $\beta\in\C$, $|\beta|=1$ and let $m\in\Hol(\D)$, $m\neq 0$. We consider the operator 
\[   T:\Hol(\D)\to \Hol(\D), \mbox{ given by  }  
 (Tf)(z)=m(z)f(\beta z).\]
By $\rho(T):=\{\lambda\in\C: (\lambda\Id -T)\mbox{ is bijective}\}$ we denote the \emph{resolvent set} of $T$. 
We let \[R_\lambda:=(\lambda\Id -T)^{-1}\in {\mathcal L}(\Hol(\D)), \;\;\;\;\lambda\in \rho(T).\] 
The continuity of $R_\lambda$ is a consequence of the closed graph theorem. Denote by 
\[ \sigma(T):=\C\setminus \rho(T)\] the \emph{spectrum} of $T$.  

The spectrum is invariant by rotation by $\beta$. More precisely we have the following.
\begin{thm}\label{th:2.1}
	\[\sigma(T)=\beta \sigma(T)\cup \{m(0)\}.\]
\end{thm}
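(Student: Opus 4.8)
The statement is an identity between subsets of $\C$, so I would prove the two inclusions $\sigma(T)\subseteq \beta\sigma(T)\cup\{m(0)\}$ and $\beta\sigma(T)\cup\{m(0)\}\subseteq\sigma(T)$ separately. Both rest on a single computational device: conjugating $T$ by the rotation operator. Define $V\colon\Hol(\D)\to\Hol(\D)$ by $(Vf)(z)=f(\beta z)$; this is a linear bijection with continuous inverse $(V^{-1}f)(z)=f(\beta^{-1}z)$. A direct calculation gives $VT V^{-1}f(z) = (Tf)(\beta^{-1}z)\cdot\bigl(\text{adjusted argument}\bigr)$; more precisely $(VTV^{-1}f)(z) = m(\beta z) f(\beta z)$, so $VTV^{-1}$ is the weighted composition operator with the same symbol $\beta$ but weight $z\mapsto m(\beta z)$. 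That is not quite $T$ again, so the cleaner move is to compare $\lambda\Id - T$ with $\beta(\lambda'\Id - T)$ for suitable $\lambda'$. I would instead exploit the factorization through the Taylor coefficients: $T$ maps the "finite-type" filtration nicely, or — more robustly — observe that if $g=(\lambda\Id-T)f$ then evaluating the defining relation and using $f(\beta z)$ versus $f(z)$ lets one relate solvability at $\lambda$ to solvability at $\beta^{-1}\lambda$.

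\medskip

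\textbf{Main argument for the nontrivial inclusion.} Suppose $\lambda\in\rho(T)$ and $\lambda\neq m(0)$; I want $\beta^{-1}\lambda\in\rho(T)$, i.e. $\lambda\in\beta\rho(T)$, which (by symmetry of the roles of $\beta$ and $\beta^{-1}$, since $|\beta|=1$) is what is needed. Given $h\in\Hol(\D)$, I must solve $(\beta^{-1}\lambda)f(z)-m(z)f(\beta z)=h(z)$ uniquely. Apply $V$ (replace $z$ by $\beta z$) to convert this into an equation whose weight is $m(\beta z)$; since $m\neq m(0)$ at worst differs from $T$ by the invertible multiplication-type adjustment $m(\beta z)/m(z)$ near points where $m$ is nonzero, but that ratio need not extend holomorphically. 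The correct bookkeeping is: the equation $(\beta^{-1}\lambda - T)f = h$ is equivalent, after the substitution $z\mapsto\beta z$ and multiplying through, to $(\lambda - T)(\text{something built from }f) = (\text{something built from }h)$, provided $\lambda\neq m(0)$ guarantees the zeroth Taylor coefficient can be matched (the constant term of $(\lambda\Id-T)f$ is $(\lambda-m(0))f(0)$, which is where the exceptional value $m(0)$ enters). So the plan is: (1) reduce to the Taylor-coefficient recursion for $(\lambda\Id-T)f=h$, which is a triangular system $\lambda f_n - \sum_{k}m_{n-k}\beta^{k}f_k = h_n$ with diagonal entries $\lambda - m_0\beta^n$; (2) observe bijectivity on $\Hol(\D)$ is equivalent to this system being solvable with the solution having the right growth (radius of convergence $\geq 1$) for every admissible right-hand side; (3) note the substitution $\lambda\mapsto\beta^{-1}\lambda$ simply shifts $f_n\mapsto\beta^{-n}f_n$ (up to the $n=0$ term), an operation that preserves membership in $\Hol(\D)$ precisely because $|\beta|=1$; (4) the $n=0$ row degenerates exactly when $\lambda=m_0=m(0)$, which accounts for why $m(0)$ must be adjoined on the right.

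\medskip

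\textbf{The reverse inclusion.} For $\beta\sigma(T)\cup\{m(0)\}\subseteq\sigma(T)$: the inclusion $\beta\sigma(T)\subseteq\sigma(T)$ follows from the same equivalence run the other way (replace $\beta$ by $\beta^{-1}$, legitimate since $|\beta^{-1}|=1$ too). For $m(0)\in\sigma(T)$: show $m(0)\Id - T$ is not surjective — the constant term of $(m(0)\Id-T)f$ is $(m(0)-m_0)f(0)=0$, so no $h$ with $h(0)\neq 0$ lies in the range; hence $m(0)\in\sigma(T)$ unconditionally.

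\medskip

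\textbf{Expected obstacle.} The delicate point is step (2): verifying that bijectivity of $\lambda\Id - T$ on the Fréchet space $\Hol(\D)$ is faithfully captured by solvability of the coefficient recursion \emph{together with} the growth condition, and that the coefficient rescaling $f_n\mapsto\beta^{-n}f_n$ induced by $\lambda\mapsto\beta^{-1}\lambda$ genuinely preserves the relevant growth class (it does, since $|\beta^{-n}|=1$, but one must also check continuity of the inverse, which the closed graph theorem handles). Care is also needed to track the $n=0$ row separately throughout, since that is the sole source of the extra point $\{m(0)\}$; conflating it with the generic rows is the easy mistake to make.
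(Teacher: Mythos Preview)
Your proposal has a genuine gap at the crucial step (3). You claim that the substitution $\lambda\mapsto\beta^{-1}\lambda$ in the coefficient recursion
\[
\lambda f_n-\sum_{k=0}^{n}m_{n-k}\,\beta^{k}f_k=h_n
\]
corresponds to the rescaling $f_n\mapsto\beta^{-n}f_n$. It does not. If you set $f_k=\beta^{-k}g_k$ and multiply the $n$-th equation by $\beta^{n}$, you obtain
\[
\lambda g_n-\beta^{n}\sum_{k=0}^{n}m_{n-k}\,g_k=\beta^{n}h_n,
\]
which is not the $(\beta^{-1}\lambda)$-equation: the off-diagonal terms acquire the wrong powers of $\beta$. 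In fact no diagonal rescaling of the coefficients converts the $\lambda$-system into the $\beta^{-1}\lambda$-system unless $m$ is constant. What \emph{is} true is the identity $\beta^{-1}\lambda-m(0)\beta^{n}=\beta^{-1}\bigl(\lambda-m(0)\beta^{n+1}\bigr)$ for the diagonal entries, and this points to an \emph{index shift}, not a coefficient rescaling.

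The paper exploits exactly that shift. It introduces $\Phi\colon\Hol(\D)\to\Hol_0(\D)$, $(\Phi f)(z)=zf(z)$, and checks directly that $\Phi^{-1}T_0\Phi=\beta T$, where $T_0$ is the restriction of $T$ to $\Hol_0(\D)=\{f\in\Hol(\D):f(0)=0\}$. This similarity yields $\sigma(T_0)=\beta\,\sigma(T)$. The theorem then follows from the elementary comparison $\rho(T)\subset\rho(T_0)\subset\rho(T)\cup\{m(0)\}$, which isolates the zeroth coefficient in precisely the way you anticipated when you singled out the $n=0$ row. Your argument that $m(0)\in\sigma(T)$ (because every function in the range of $m(0)\Id-T$ vanishes at $0$) is correct and matches the paper's.
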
 
Straightforward  properties of invariance of the spectrum are the following.

\begin{cor}\label{cor:2.2}
 \,\\ \vspace{-0,5cm}
\begin{itemize}
\item[a)] If $\lambda\in \sigma(T)$ then $\lambda^n\in\sigma(T)$ for all $n\in\N$.  
\item[b)] $\{ \beta^n m(0):n\in\N_0  \}\subset \sigma(T)$. 
\end{itemize}
\end{cor}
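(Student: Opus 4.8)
My plan is to read off from Theorem~\ref{th:2.1} the two ingredients $\beta\sigma(T)\subseteq\sigma(T)$ and $m(0)\in\sigma(T)$, and to base both assertions on them. Part~b) then follows immediately by induction on $n$: the case $n=0$ is the statement $m(0)\in\sigma(T)$, and the inductive step is $\beta^{n+1}m(0)=\beta\cdot(\beta^n m(0))\in\beta\sigma(T)\subseteq\sigma(T)$. Hence $\{\beta^n m(0):n\in\N_0\}\subseteq\sigma(T)$.

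For part~a) I would invoke the elementary inclusion of the spectral mapping theorem, which uses only the algebra of $T$ and is insensitive to the Fr\'echet topology (recall that on $\Hol(\D)$ the spectrum is defined through bijectivity, continuity of the inverse being automatic by the closed graph theorem). Fixing $\lambda\in\sigma(T)$ and $n\in\N$, I set $q(t)=\sum_{j=0}^{n-1}\lambda^{\,n-1-j}t^{\,j}$, so that $\lambda^n-t^n=(\lambda-t)q(t)$ and therefore
\[
\lambda^n\Id-T^n=(\lambda\Id-T)\,q(T)=q(T)\,(\lambda\Id-T).
\]
Were $\lambda^n\Id-T^n$ bijective, the left factorisation would make $\lambda\Id-T$ surjective and the right one would make it injective, so $\lambda\Id-T$ would be bijective, contradicting $\lambda\in\sigma(T)$. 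This yields $\lambda^n\in\sigma(T^n)$.

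The remaining --- and, I expect, the genuinely delicate --- step is to pass from $\sigma(T^n)$ back to $\sigma(T)$, i.e.\ to establish the inclusion $\sigma(T^n)\subseteq\sigma(T)$ that upgrades $\lambda^n\in\sigma(T^n)$ to $\lambda^n\in\sigma(T)$. The useful structural fact is that $T^n$ is once more an operator of the present kind: a short computation gives $T^nf(z)=m_n(z)f(\beta^nz)$ with $m_n(z)=\prod_{k=0}^{n-1}m(\beta^k z)$, a weighted composition induced by the rotation $\beta^n$. In the model case $m\equiv 1$, where $T$ is diagonalised by the monomials with eigenvalues $\beta^k$, the inclusion reduces to a clean comparison: membership of $\lambda^n$ in $\sigma(T^n)$ is governed by how well $\lambda^n$ is approximated along the sub-orbit $\{\beta^{nk}:k\in\N_0\}$ (or by $\lambda^n$ being an eigenvalue), and any such approximation along the sub-orbit is a fortiori one along the full orbit $\{\beta^{m}:m\in\N_0\}$ that governs $\sigma(T)$, read off along the subsequence $m=nk$. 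It is precisely this comparison --- where the aperiodicity and the Diophantine nature of $\beta$ enter --- that I regard as the heart of the matter; the algebraic bookkeeping of the first two paragraphs is routine by contrast, and for a general weight $m$ the same transfer must be carried out through the spectral decomposition of $T$ rather than by bare diagonalisation.
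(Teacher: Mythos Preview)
Your argument for b) is exactly what the paper has in mind; there is nothing to add.

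For a), however, you have been misled by what is almost certainly a misprint in the statement. The intended assertion is
\[
\lambda\in\sigma(T)\ \Longrightarrow\ \beta^{\,n}\lambda\in\sigma(T)\quad(n\in\N),
\]
not $\lambda^{\,n}\in\sigma(T)$. Two pieces of internal evidence confirm this: first, the paper later invokes ``Corollary~\ref{cor:2.2}~a)'' precisely to deduce $\{\beta^{\,n}\lambda:n\in\N\}\subset\sigma(T)$ (see the proof of Lemma~\ref{lem:7.4}); second, the literal claim $\lambda^{\,n}\in\sigma(T)$ is false in general. Take $\beta=-1$ and $m\equiv 2$: then $Te_k=2(-1)^k e_k$, so $\sigma(T)=\{2,-2\}$, yet $2\in\sigma(T)$ while $2^2=4\notin\sigma(T)$.

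The intended statement is immediate from Theorem~\ref{th:2.1}: since $\beta\sigma(T)\subset\sigma(T)$, an induction gives $\beta^{\,n}\lambda\in\sigma(T)$ for every $n$. This is why the paper labels both parts ``straightforward properties of invariance''.

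Consequently your spectral-mapping detour via $\sigma(T^n)$ is aimed at the wrong target, and the step you rightly flag as ``genuinely delicate'' --- the inclusion $\sigma(T^n)\subset\sigma(T)$ --- is not merely delicate but false in general (the periodic example above already shows $\sigma(T^2)=\{4\}\not\subset\{2,-2\}=\sigma(T)$). So that line of argument should be abandoned rather than completed.
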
   
For the proof of Theorem~\ref{th:2.1} we use the restriction $T_0$ of $T$ to the space 
\[  \Hol_0(\D)  :=\{   f\in\Hol(\D):f(0)=0\}.\]
Since $(Tf)(0)=m(0)f(0)$, it follows that $T\Hol_0(\D)\subset \Hol_0(\D)$, and then $T_0\in{\mathcal L}(\Hol_0(\D))$. The next lemma 
shows the link between the spectrum of $T_0$ and the one of $T$.  
\begin{lem}\label{lem:2.3}
\[  \sigma(T_0)=\beta\sigma (T)\mbox{ and }\rho(T_0)=\beta\rho(T).  \]
\end{lem}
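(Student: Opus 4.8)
Since $\Hol(\D) = \C\cdot\mathbf{1} \oplus \Hol_0(\D)$ (decomposing $f = f(0)\mathbf{1} + (f - f(0)\mathbf{1})$), I will compute the action of $\lambda\Id - T$ on this decomposition and read off bijectivity. The key observation is that $(Tf)(0) = m(0)f(0)$, so $T$ does not respect this direct sum (it maps $\mathbf{1}$ to the function $m(\cdot)$, which generally has a nonzero constant term $m(0)$ plus something in $\Hol_0(\D)$), but the quotient action is clean: modulo $\Hol_0(\D)$, $T$ acts as multiplication by $m(0)$.

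\medskip

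First I would establish the inclusion $\beta\rho(T) \subset \rho(T_0)$. Suppose $\mu \in \rho(T)$ and set $\lambda = \beta\mu$; I want to show $\lambda\Id - T_0$ is bijective on $\Hol_0(\D)$. The natural device is to relate $T_0$ to $T$ via the shift in the power series: if $f(z) = \sum_{n\geq 1} a_n z^n \in \Hol_0(\D)$, write $f(z) = z g(z)$ with $g \in \Hol(\D)$. Then $(T_0 f)(z) = m(z) f(\beta z) = m(z)\beta z\, g(\beta z) = \beta z\, (T g)(z)$. In other words, conjugating by the isomorphism $g \mapsto z g$ from $\Hol(\D)$ onto $\Hol_0(\D)$, the operator $T_0$ becomes $\beta T$. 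Hence $\sigma(T_0) = \beta\,\sigma(T)$ and $\rho(T_0) = \beta\,\rho(T)$ follow immediately from this similarity, since multiplying an operator by a scalar $\beta$ multiplies its spectrum by $\beta$.

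\medskip

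So the real content is the identity $T_0 = M_z^{-1}(\beta T) M_z$ where $M_z : \Hol(\D) \to \Hol_0(\D)$, $M_z g = z g$, is a linear isomorphism (its inverse $f \mapsto f/z$ is well-defined on $\Hol_0(\D)$ and continuous, e.g. by the Cauchy estimates or the closed graph theorem). I would verify directly that $M_z$ and $M_z^{-1}$ are continuous, then compute $(M_z^{-1} T_0 M_z g)(z) = \frac{1}{z}\bigl(m(z)\,\beta z\, g(\beta z)\bigr) = \beta m(z) g(\beta z) = \beta (Tg)(z)$, valid first for $z \neq 0$ and then for all $z$ by continuity. Consequently $\lambda\Id - T_0 = M_z^{-1}\bigl(\lambda\Id - \beta T\bigr) M_z = \beta\, M_z^{-1}\bigl(\tfrac{\lambda}{\beta}\Id - T\bigr) M_z$, which is bijective on $\Hol_0(\D)$ if and only if $\tfrac{\lambda}{\beta}\Id - T$ is bijective on $\Hol(\D)$, i.e. iff $\lambda/\beta \in \rho(T)$, i.e. iff $\lambda \in \beta\rho(T)$. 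Taking complements gives $\sigma(T_0) = \beta\sigma(T)$.

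\medskip

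I do not expect a serious obstacle here; the main point requiring a little care is the continuity of the map $f \mapsto f/z$ on $\Hol_0(\D)$ in the Fr\'echet topology, which I would dispatch either by the closed graph theorem for Fr\'echet spaces or by the elementary estimate $\sup_{|z|\leq r}|f(z)/z| \leq \frac{1}{r'}\sup_{|z|\leq r'}|f(z)|$... (actually more simply, by the maximum principle applied to $f(z)/z$, $\sup_{|z|\leq r}|f(z)/z| = \sup_{|z|=r}|f(z)|/r$), giving continuity of the seminorms directly.
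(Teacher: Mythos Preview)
Your proof is correct and takes essentially the same approach as the paper: you conjugate $T_0$ by the multiplication-by-$z$ isomorphism $M_z:\Hol(\D)\to\Hol_0(\D)$ to obtain $M_z^{-1}T_0M_z=\beta T$, and then read off $\sigma(T_0)=\beta\sigma(T)$ from similarity. The paper's argument is identical (it calls the map $\Phi$ rather than $M_z$), though it does not bother to verify continuity of $\Phi^{-1}$ as you do.
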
 
\begin{proof}
We consider the bijective and linear map $\Phi:\Hol(\D)\to\Hol_0(\D)$ given by $(\Phi f)(z)=zf(z)$ whose inverse is defined by 
$(\Phi^{-1}g)(z)=\frac{1}{z}g(z),\,\, z\neq 0$.  Then $\Phi^{-1} T_0 \Phi=\beta T$. Since $T_0$ and $\beta T$ are similar, the assertions of the lemma follow.  	
	
\end{proof}	
A further step in the proof of Theorem~\ref{th:2.1} is the following. 
\begin{lem}\label{lem:2.4}
 \,\\ \vspace{-0,5cm}
\begin{itemize}
	\item[a)]  $m(0)\in\sigma(T)$,  
	\item[b)]  $\rho(T)\subset \rho(T_0)$ and 
	\item[c)] $\rho(T_0)\subset \rho(T)\cup \{  m(0)\}$. 
\end{itemize}
\end{lem}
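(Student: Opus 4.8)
The plan is to exploit the commutative diagram $\Phi^{-1}T_0\Phi=\beta T$ from Lemma~\ref{lem:2.3} together with a direct analysis of how the equation $(\lambda\Id-T)f=g$ behaves at the origin. For part a), the point is that $m(0)$ is an eigenvalue-like obstruction: if $\lambda=m(0)$, then evaluating $((\lambda\Id-T)f)(0)=\lambda f(0)-m(0)f(0)=0$ for every $f\in\Hol(\D)$, so the range of $\lambda\Id-T$ is contained in $\Hol_0(\D)$ and in particular $\lambda\Id-T$ is not surjective (the constant function $1$ is not in the range). Hence $m(0)\in\sigma(T)$. (Depending on whether $m$ vanishes somewhere, $m(0)$ may or may not be an eigenvalue, but non-surjectivity alone suffices.)

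For part b), suppose $\lambda\in\rho(T)$; I want to show $\lambda\Id-T_0$ is bijective on $\Hol_0(\D)$. Injectivity is immediate since $T_0$ is a restriction of $T$. For surjectivity, take $g\in\Hol_0(\D)\subset\Hol(\D)$; then there is $f\in\Hol(\D)$ with $(\lambda\Id-T)f=g$, and I must check $f\in\Hol_0(\D)$, i.e. $f(0)=0$. Evaluating at $0$: $\lambda f(0)-m(0)f(0)=g(0)=0$, so $(\lambda-m(0))f(0)=0$. If $\lambda\neq m(0)$ this forces $f(0)=0$ and we are done. The case $\lambda=m(0)$ is excluded because $m(0)\in\sigma(T)$ by part a), so it is not in $\rho(T)$. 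Thus $\rho(T)\subset\rho(T_0)$.

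For part c), suppose $\lambda\in\rho(T_0)$ and $\lambda\neq m(0)$; I want $\lambda\in\rho(T)$, i.e. $\lambda\Id-T$ bijective on all of $\Hol(\D)$. Here the natural move is to split $f=f(0)\cdot 1+f_0$ with $f_0\in\Hol_0(\D)$, and likewise $g=g(0)\cdot 1+g_0$. Writing out $(\lambda\Id-T)f=g$ and using $(T\mathbf{1})(z)=m(z)$, one gets a triangular-type system: the constant part reads $(\lambda-m(0))f(0)=g(0)$, which (since $\lambda\neq m(0)$) determines $f(0)$ uniquely; substituting back, the remaining equation is of the form $(\lambda\Id-T_0)f_0=g_0-f(0)(\text{something in }\Hol_0(\D))$, which has a unique solution $f_0$ because $\lambda\in\rho(T_0)$. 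This yields bijectivity of $\lambda\Id-T$ on $\Hol(\D)$. The one technical point to be careful about is that $T\mathbf{1}=m$ need not lie in $\Hol_0(\D)$, so one should write $m(z)=m(0)\mathbf{1}+(m-m(0))$ and absorb $m(0)\mathbf{1}$ into the constant-part equation and $m-m(0)\in\Hol_0(\D)$ into the $\Hol_0$-part equation; nothing deeper is needed. I expect this bookkeeping of the constant term to be the only mildly delicate step; everything else is a routine consequence of Lemma~\ref{lem:2.3} and the closed graph theorem.
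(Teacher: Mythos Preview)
Your proposal is correct and follows essentially the same approach as the paper: part~a) is identical (non-surjectivity via evaluation at $0$), part~b) is identical (show $R_\lambda\Hol_0(\D)\subset\Hol_0(\D)$ by evaluating at $0$ and using $\lambda\neq m(0)$ from a)), and part~c) is the same decomposition $\Hol(\D)=\C e_0\oplus\Hol_0(\D)$ with the splitting $m=m(0)e_0+(m-m(0)e_0)$. The only cosmetic difference is that the paper handles injectivity and surjectivity in c) separately (reducing surjectivity to the single target $g=e_0$), whereas you solve $(\lambda\Id-T)f=g$ for a general $g$ in one pass; the content is the same. Your mention of the similarity $\Phi^{-1}T_0\Phi=\beta T$ from Lemma~\ref{lem:2.3} is not actually used in your argument (nor in the paper's proof of this lemma), so you can drop that remark.
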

\begin{proof}
a) Since $((m(0)\Id -T)f)(0)=0$ for all $f\in\Hol(\D)$, the operator $m(0)\Id -T$ is not surjective. \\
b) Let $\lambda\in\rho(T)$. Then $m(0)\neq \lambda$ by a).  Let $g\in\Hol_0(\D)$, $f= R_\lambda g$. Then 
\[\lambda f(z)-m(z)f(\beta z)=g(z)\mbox{ for all }z\in\D .\]
Hence $(\lambda -m(0))f(0)=g(0)=0$. Thus $f(0)=0$, which proves that $R_\lambda \Hol_0(\D)\subset \Hol_0(\D)$ and proves the desired inclusion. \\
c) Let $m(0)\neq \lambda\in\rho(T_0)$. Then $\lambda\Id - T$ is injective. In fact, if $(\lambda\Id-T)f=0$, since $\lambda \neq m(0)$, evaluating in $0$ show that $f\in \Hol_0(\D)$. Hence, $f=0$ since $\lambda \in \rho(T_0)$. In order to show surjectivity, since $\Hol(\D) =\Hol_0(\D)\oplus \C$, it suffices to show  that  there exists $f\in \Hol(\D)$ such that $(\lambda\Id -T)f={e_0}$, 
 where $e_0$ is the function equal to 1 on $\D$.  
Evaluating in $0$ shows that $(\lambda -m(0))f(0)=1$, and then $f(0)=\frac{1}{\lambda -m(0)}$.  Moreover
\begin{equation}\label{eq:2.1}
(\lambda \Id -T)f(0)e_0 +(\lambda\Id -T)(f-f(0))e_0 =e_0.
\end{equation}     
Since $(\lambda\Id -T)f(0)e_0 + (\lambda\Id -T)(f-f(0)e_0)-f(0)(m -m(0)e_0)$, (\ref{eq:2.1}) is equivalent to 
\[  (\lambda \Id -T)(f-f(0)e_0)= f(0) (m-m(0)e_0) .\]
Since $\lambda\in \rho(T_0)$, there exists a unique $f_1\in\Hol_0(\D)$ such that 
\[(\lambda\Id -T)f_1=\frac{m-m(0)e_0}{\lambda -m(0)}.\]   
Thus $f:= f_1 + \frac{1}{\lambda -m(0)}e_0$  is the unique function in $\Hol(\D)$ satisfying \\
$\lambda f-Tf=e_0$.   
\end{proof}
If $\beta=e^{2i\pi\alpha}$, $m(z)=1$ for all $z\in\D$ and $\alpha$ is a diophantine number, it was shown by J. Bonet 
\cite[Corollary 3]{Bonet} that 
$m(0)=1\in \rho(T_0)$. So Lemma~\ref{lem:2.4} is optimal.  \\

Theorem~\ref{th:2.1} is an immediate consequence of Lemma~\ref{lem:2.3} and \ref{lem:2.4}. 

 We will see in Section~\ref{sec:4} that $\sigma(T)$ is not closed in general. 
 We add a spectral result, which is more precise than Corollary~\ref{cor:2.2} and will be useful later. 
 
 By $e_k$ we denote the function $e_k(z)=z^k$ $(z\in\D)$, where $k\in\N_0$.   

\begin{lem}\label{lem:2.5}
Let $m\in \Hol(\D)$ such that $m(0)\neq 0$, $\beta\in\T$, $n\in\N$ such that $\beta^k\neq 1$ for $k=1,\cdots, n$. Then 
\[ e_k\not\in (\beta^k m(0)e_0-T)\Hol(\D)\mbox{ for }k=1,\cdots,n.  \] 	
\end{lem}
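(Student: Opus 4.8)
The plan is to argue by contradiction, fixing some $k\in\{1,\dots,n\}$ and supposing there exists $f\in\Hol(\D)$ with
\[
(\beta^k m(0)e_0 - T)f = e_k,
\]
i.e.\ $\beta^k m(0)f(z) - m(z)f(\beta z) = z^k$ for all $z\in\D$. I would then compare Taylor coefficients at $0$. Writing $f(z)=\sum_{j\ge 0}a_j z^j$ and $m(z)=\sum_{j\ge 0}c_j z^j$ with $c_0=m(0)\neq 0$, the functional equation becomes, for each $j\ge 0$,
\[
\beta^k c_0 a_j - \sum_{i=0}^{j} c_i\,\beta^{\,j-i} a_{j-i} = \delta_{jk}.
\]
The key observation is that the $j$-th equation expresses $(\beta^k - \beta^j)\,c_0\,a_j$ in terms of $a_0,\dots,a_{j-1}$ (plus the source term $\delta_{jk}$), because the $i=0$ term on the left is $c_0\beta^j a_j$. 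For $j=0,\dots,k-1$ the coefficient $(\beta^k-\beta^j)c_0$ is nonzero by the hypothesis $\beta^i\neq 1$ for $i=1,\dots,n$ (so $\beta^j\neq\beta^k$ whenever $0\le j<k\le n$), hence these equations determine $a_0,\dots,a_{k-1}$ uniquely by recursion. But then the $j=k$ equation reads
\[
(\beta^k - \beta^k)\,c_0\,a_k - \sum_{i=1}^{k} c_i\,\beta^{\,k-i} a_{k-i} = 1,
\]
that is, $-\sum_{i=1}^{k} c_i\beta^{k-i}a_{k-i} = 1$, which is a constraint on the already-determined quantities $a_0,\dots,a_{k-1}$ — the coefficient $a_k$ has dropped out.

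To reach the contradiction I would show this constraint forces $1=0$, i.e.\ that $\sum_{i=1}^{k}c_i\beta^{k-i}a_{k-i}=0$. The cleanest way is to use Lemma~\ref{lem:2.3} (or rather its proof): passing to $\Hol_0(\D)$ via $\Phi$, or more directly using that $\beta^k m(0)=\beta^k\cdot m(0)$ and iterating the relation $Tf(z)=m(z)f(\beta z)$, one sees that the homogeneous equation $(\beta^k m(0)e_0 - T)g = 0$ has a solution space that, combined with the $j<k$ recursion above, pins down the values $a_0,\dots,a_{k-1}$ to be exactly those of a scalar multiple of a specific "eigenfunction-type" solution; substituting back into the $j=k$ line gives $0$ on the left. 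Alternatively, and perhaps more transparently, one substitutes $z\mapsto\beta z$ repeatedly: applying the functional equation to $z,\beta z,\beta^2 z,\dots$ and combining yields, after dividing by $\beta^k m(0)$ and telescoping, that $f$ would have to satisfy an identity whose constant term on the $k$-th coefficient level is incompatible with the right-hand side $z^k$ unless $\beta^k=1$; since $\beta^k\neq 1$ this is the contradiction.

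The main obstacle I anticipate is making the "the constraint forces $1=0$" step rigorous without circular reasoning: the recursion determines $a_0,\dots,a_{k-1}$, but one still has to verify that those particular values make $\sum_{i=1}^{k}c_i\beta^{k-i}a_{k-i}$ vanish rather than equal some nonzero number. I expect this is where one genuinely needs the structure of $T$ — specifically that $e_0$ is (up to scalar) the eigenfunction of the "constant-coefficient part" with eigenvalue $m(0)$, and that the hypothesis $\beta^i\neq 1$ guarantees the only obstruction to solving $(\beta^k m(0)e_0-T)f=e_k$ lives precisely in the $k$-th slot. Once the recursion is set up, identifying that the $j=k$ equation is the unique obstruction and that its left side is structurally zero should follow by a short bookkeeping argument, possibly cleanest if phrased in terms of the operator $T_0$ and the decomposition $\Hol(\D)=\Hol_0(\D)\oplus\C e_0$ already exploited in Lemma~\ref{lem:2.4}.
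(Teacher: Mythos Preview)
Your power-series approach is correct in spirit, but you have overcomplicated the crucial step and in doing so missed the one-line observation that closes the argument. You set up the recursion
\[
(\beta^k-\beta^j)\,c_0\,a_j \;=\; \sum_{i=1}^{j} c_i\,\beta^{\,j-i} a_{j-i}\;+\;\delta_{jk},
\]
and then worry about showing that the determined values $a_0,\dots,a_{k-1}$ make $\sum_{i=1}^{k} c_i\beta^{k-i}a_{k-i}$ vanish. But look at the recursion itself: for $j=0$ it reads $(\beta^k-1)c_0 a_0=0$, hence $a_0=0$. By straightforward induction, if $a_0=\cdots=a_{j-1}=0$ and $j<k$, the right-hand side is zero, while $\beta^k-\beta^j=\beta^j(\beta^{k-j}-1)\neq 0$ by hypothesis, so $a_j=0$. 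Thus $a_0=\cdots=a_{k-1}=0$, and your $j=k$ equation collapses to $0=1$. No ``eigenfunction-type'' considerations, no iterated substitutions, no appeal to $\Hol_0(\D)$ are needed; the obstacle you anticipate simply is not there.

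Once this is seen, your argument and the paper's are really the same. The paper argues by descent: evaluating at $0$ gives $f(0)=0$ (your $a_0=0$), so $f=e_1 f_1$; substituting and dividing by $z$ turns the equation for $e_k$ at level $\beta^k$ into the equation for $e_{k-1}$ at level $\beta^{k-1}$, contradicting minimality of $k$. Each descent step is exactly one step of your induction $a_{j-1}=0\Rightarrow a_j=0$. The paper's formulation is slightly slicker because it avoids writing out the Cauchy product for $m(z)f(\beta z)$, but the content is identical.
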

\begin{proof}
 First note that $e_0\not\in (m(0)e_0-T)\Hol(\D)$	since for $f\in\Hol(\D)$, $((m(0)e_0-T)f)(0)=0$. 
Now  assume that there exists $k\in \{1,\cdots,n\} $ such that  \[e_k\in (\beta^k m(0)e_0-T)\Hol(\D).\]
 Choose $k$ minimal with this property. So there exists $f\in\Hol(\D)$ such that 
\[\beta^k m(0)f(z)-m(z)f(\beta z)=z^k,\,\,\, z\in\D.\]
Then $\beta^k m(0)f(0)-m(0)f(0)=0$. Thus $f(0)=0$ and and there exists $f_1\in\Hol(\D)$ such that $f=e_1f_1$. It follows that 
\[ \beta^k m(0)zf_1(z) -m(z)\beta zf_1(\beta z)=z^k,\,\,\, z\in\D .  \]  
Then \[ \beta^{k-1} m(0)(\beta f_1)(z) -m(z)(\beta f_1)(\beta z)=z^{k-1},\,\,\, z\in\D ,  \]
which contradicts the minimality of $k$. 
\end{proof}
\section{The point spectrum of weighted composition operators induced by periodic and aperiodic  rotations}\label{sec:3}
The main purpose of this section is to determine the point spectrum. At first we consider the periodic case, where it is easy to determine 
the entire spectrum, which we do first. The point spectrum demands more efforts. We let $\beta\in \T$, $m\in\Hol(\D)$, $m\neq 0$, and consider 
the operator $T$ on $\Hol(\D)$ by 
\[   (Tf)(z)= m(z)f(\beta z),\,\,\, z\in\D.\]
       We first show the following, which holds for the periodic and aperiodic case. By 
       \[\sigma_p(T):=\{ \lambda\in\C :\lambda \Id -T\mbox{ is not injective} \} \]
       	we denote the \emph{point spectrum} of $T$.  As in the previous section, by $e_k$ we denote the function $e_k(z)=z^k$ $(z\in\D)$, where $k\in\N_0$.   
 \begin{prop}\label{prop:3.1}
 \,\\ \vspace{-0,5cm}
 \begin{itemize}
 	\item[a)]   $0\not\in\sigma_p(T)$.  
 	\item[b)]   If $\lambda\in\sigma_p(T)$, then $\beta^n\lambda\in\sigma_p(T)$ for all $n\in\N$. 
 \end{itemize}	
 \end{prop}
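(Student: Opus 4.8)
The plan is to treat the two parts separately, both being short.

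For part (a) I would show directly that $T$ is injective on $\Hol(\D)$. Take $f\in\Hol(\D)$ with $Tf=0$, so that $m(z)f(\beta z)=0$ for all $z\in\D$. Since $m\not\equiv 0$ is holomorphic, its zero set is discrete in $\D$; hence $f(\beta z)=0$ off a discrete set, and as $z\mapsto \beta z$ is a biholomorphism of $\D$, the identity theorem forces $f\equiv 0$. Thus $\ker T=\{0\}$ and $0\notin\sigma_p(T)$.

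For part (b) I would exhibit an explicit eigenvector. Suppose $\lambda\in\sigma_p(T)$ and pick $f\in\Hol(\D)$, $f\neq 0$, with $Tf=\lambda f$, i.e. $m(z)f(\beta z)=\lambda f(z)$ for all $z\in\D$. Set $g:=e_1 f$, that is $g(z)=zf(z)$. Then
$$(Tg)(z)=m(z)g(\beta z)=m(z)\,\beta z\,f(\beta z)=\beta z\,\bigl(m(z)f(\beta z)\bigr)=\beta z\,\lambda f(z)=\beta\lambda\,g(z),$$
and $g\neq 0$ since multiplication by $e_1$ is injective on $\Hol(\D)$; hence $\beta\lambda\in\sigma_p(T)$. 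The same computation with $e_n$ in place of $e_1$ gives $T(e_n f)=\beta^n\lambda\,(e_n f)$, so $\beta^n\lambda\in\sigma_p(T)$ for every $n\in\N$; alternatively one simply iterates the case $n=1$.

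I do not expect a genuine obstacle: part (a) is a one-line application of the identity theorem, and part (b) is a one-line computation once one guesses the eigenvector $e_n f$. The only subtlety worth flagging is that $m$ is allowed to vanish somewhere in $\D$, which is precisely why in (a) one argues via the discreteness of the zero set of $m$ rather than assuming $m$ zero-free.
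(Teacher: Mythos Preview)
Your proof is correct and follows essentially the same approach as the paper: for (a) the paper also uses that $m\not\equiv 0$ forces $f(\beta z)=0$ on an open nonempty set (equivalently, off a discrete set) and invokes the identity theorem, and for (b) the paper likewise takes the eigenvector $f_k:=e_k f$ and checks $Tf_k=\beta^k\lambda f_k$.
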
      	
\begin{proof}
a) Let $f\in\Hol(\D)$ such that $(Tf)(z)=m(z)f(\beta z)=0$ for all $z\in\D$. Then $f(w)=0$ for all $w\in \beta\{ z:m(z)\neq 0\}=:\Omega$. Since $m\neq 0$, 
$\Omega$ is open and non-empty. It follows that $f=0$.\\
b) Let $0\neq f\in \Hol(\D)$ such that $Tf=\lambda f$. Then $f_k:=e_kf\neq 0$ and $Tf_k=\beta^k \lambda f_k$.  
\end{proof}
\subsection{The periodic case}
The spectrum $\sigma(M)$ of the multiplication operator $M\in {\mathcal L}(\Hol(\D))$ given by $Mf=mf$ is clearly  equal to $m(\D)$. Thus the spectrum of $M$ is open unless 
$m$ is constant. Now, let $\beta\in\C$ be a root of unity different from $1$.  In other words there exists $N\geq 2$ such that  $\beta^N=1$, $\beta^k\neq 1$ for $k=1,\cdots,N-1$. 
Then $T^N$ is the multiplication operator given by 
\[T^Nf=m_N f \mbox{ where }m_N(z)=m(z)m(\beta z)\cdots m(\beta^{N-1}z).\]
\begin{prop}\label{prop:3.2}
	One has $\sigma(T)=\{    \lambda \in\C :\lambda^N\in m_N(\D)\}$. 
\end{prop}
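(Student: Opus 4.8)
The plan is to reduce everything to the spectrum of the multiplication operator $M_N$ given by $M_Nf=m_Nf$, which we already know equals $m_N(\D)$. Since $\beta^N=1$, we have $T^N=M_N$, so the spectral mapping inclusion $\sigma(T)^N\subseteq\sigma(T^N)$ would immediately give the inclusion $\sigma(T)\subseteq\{\lambda:\lambda^N\in m_N(\D)\}$ — but in a Fréchet space the spectral mapping theorem is not automatic, so I would prove directly that if $\lambda^N\notin m_N(\D)$ then $\lambda\Id-T$ is bijective. The standard device is the factorization
\[
\lambda^N\Id-T^N=(\lambda\Id-T)\,q(T)=q(T)\,(\lambda\Id-T),\qquad q(T)=\sum_{j=0}^{N-1}\lambda^{N-1-j}T^j.
\]
If $\lambda^N\notin m_N(\D)$, then $\lambda^N\Id-T^N=\lambda^N\Id-M_N$ is invertible in $\mathcal L(\Hol(\D))$ (its inverse being multiplication by $(\lambda^N-m_N)^{-1}\in\Hol(\D)$), and the factorization then forces $\lambda\Id-T$ to be both injective (it has a left inverse $q(T)(\lambda^N\Id-M_N)^{-1}$) and surjective (it has a right inverse), hence $\lambda\in\rho(T)$. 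Note this argument also handles $\lambda=0$: if $0\notin m_N(\D)$ then $m_N$ has no zero, and the above gives $0\in\rho(T)$, consistent with $m\neq 0$ and Proposition~\ref{prop:3.1}a).

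For the reverse inclusion I must show that $\lambda^N\in m_N(\D)$ implies $\lambda\in\sigma(T)$. Write $\lambda^N=m_N(w)$ for some $w\in\D$. If $\lambda=0$, then $m_N(w)=0$, so $m(\beta^jw)=0$ for some $j$, i.e. $m$ vanishes somewhere in $\D$; but then $0\in\sigma_p(M)\subseteq\sigma(T)$? — more cleanly: if $m$ has a zero $z_0\in\D$, then $T$ is not surjective onto $\Hol(\D)$ because every function in the range vanishes at $\beta^{-1}z_0$, so $0\in\sigma(T)$. (And if $\lambda=0$ but $m$ has no zero, then $m_N$ has no zero and the case $\lambda^N\in m_N(\D)$ does not arise.) For $\lambda\neq 0$: since $\lambda^N\in m_N(\D)=\sigma(M_N)=\sigma(T^N)$, we get $\lambda^N\in\sigma(T^N)$, and I want to conclude $\lambda\in\sigma(T)$. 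The contrapositive is exactly the first part: if $\lambda\in\rho(T)$ then $T$ commutes with the invertible $\lambda\Id-T$, and from $\lambda^N\Id-T^N=(\lambda\Id-T)q(T)=q(T)(\lambda\Id-T)$ one sees $\lambda^N\Id-T^N$ is invertible (its inverse is $(\lambda\Id-T)^{-1}$ times the inverse of $q(T)$ restricted appropriately — more carefully, one shows $q(T)$ is invertible by observing that $\lambda^N\Id-T^N=\prod_{\zeta^N=1}(\lambda\Id-\zeta T)$ over all $N$-th roots of unity, and each factor $\lambda\Id-\zeta T$ is conjugate to a scalar multiple of $\lambda\zeta^{-1}\Id-T$... ). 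This is the point that needs care.

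The cleanest route around that subtlety: use $T^N=M_N$ together with the fact that $T$ commutes with $T^N=M_N$, and exploit that the $N$ operators $\lambda\Id-\zeta T$, $\zeta\in\mu_N$, pairwise commute and multiply to $\lambda^N\Id-M_N$. If $\lambda^N\in m_N(\D)$, pick $w$ with $m_N(w)=\lambda^N$; I claim at least one factor $\lambda\Id-\zeta T$ fails to be surjective. Indeed if all were surjective, their product would be surjective, but $(\lambda^N\Id-M_N)$ is not surjective when $m_N(w)=\lambda^N$ (range functions all vanish at $w$) — wait, that requires $\lambda^N-m_N$ to vanish at $w$, which it does, so $\lambda^N\Id-M_N$ indeed is not surjective; hence some $\lambda\Id-\zeta_0 T$ is not surjective, so $\lambda\zeta_0^{-1}\in\sigma(T)$; but then by Proposition~\ref{prop:3.1}b), or rather by $\sigma(T)=\beta\sigma(T)\cup\{m(0)\}$ of Theorem~\ref{th:2.1} iterated (note $\zeta_0$ is a power of $\beta$ since $\beta$ is a primitive... no — $\beta$ need only satisfy $\beta^N=1$, and if $\beta$ is a primitive $N$-th root then $\{\beta^k\}=\mu_N$), we get $\lambda=\beta^k\cdot(\lambda\zeta_0^{-1})\in\sigma(T)$ for the appropriate $k$. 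The main obstacle is precisely bookkeeping this commutative-product argument cleanly and handling the case where $\beta$ is a non-primitive root of unity; I expect one reduces to the primitive case by replacing $N$ with the true order of $\beta$, after checking $m_N$ with the stated (larger) $N$ still has the same zero set issue, or simply notes $N$ in the statement is taken minimal as written ("$\beta^k\neq 1$ for $k=1,\dots,N-1$"), so $\beta$ is a primitive $N$-th root and $\{\beta^k:k=0,\dots,N-1\}=\mu_N$, which removes the difficulty entirely.
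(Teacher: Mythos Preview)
Your proposal is correct and follows essentially the same route as the paper. The paper's proof simply asserts the polynomial spectral mapping $\sigma(T)^N=\sigma(T^N)=m_N(\D)$ and then, for the reverse inclusion, picks $\mu\in\sigma(T)$ with $\mu^N=\lambda^N$ and uses $\beta\sigma(T)=\sigma(T)$ (from Theorem~\ref{th:2.1} together with $\beta^N=1$) to conclude $\lambda=\beta^k\mu\in\sigma(T)$; your final paragraph is exactly this argument with the spectral mapping step unpacked via the factorization $\lambda^N\Id-T^N=\prod_{\zeta^N=1}(\lambda\Id-\zeta T)$, which is a welcome bit of extra care in the Fr\'echet setting. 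Your separate treatment of $\lambda=0$ and the abandoned attempt in the middle paragraph are unnecessary detours, and you correctly note that the hypothesis ``$\beta^k\neq 1$ for $k=1,\dots,N-1$'' makes $\beta$ a primitive $N$-th root, so $\{\beta^k\}=\mu_N$ and the bookkeeping goes through.
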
  
\begin{proof}
Since $\sigma(T)^N=\sigma(T^N)=m_N(\D)$, one has $\lambda^N\in m_N(\D)$ for all $\lambda\in \sigma(T)$. For the converse, note that, by Theorem~\ref{th:2.1}, $\beta \sigma(T)\subset \sigma(T)$. 
Since $\beta$ is an $N$-th root of unity, this implies that $\beta \sigma(T)=\sigma(T)$. Now let $\lambda\in\C$ such that $\lambda^N\in m_N(\D)$. Then $\lambda^N\in\sigma(T^N)$.
So there exists $\mu\in \sigma(T)$ such that $\lambda^N=\mu^N$. Hence $\lambda=\beta^k\mu$ for some $k\in\N$ and so $\lambda\in\sigma(T)$.  	
\end{proof}	
\begin{prop}\label{prop:3.3}
	Assume that $m(z_0)=0$ for some $z_0\in\D$. Then $\sigma_p(T)=\emptyset$. 
\end{prop}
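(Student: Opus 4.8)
The plan is to argue by contradiction, using the fact recorded just before Proposition~\ref{prop:3.2} that in the periodic case $T^N$ is the multiplication operator $f\mapsto m_N f$, where $m_N(z)=m(z)m(\beta z)\cdots m(\beta^{N-1}z)$. Suppose $\lambda\in\sigma_p(T)$ and choose $f\in\Hol(\D)$, $f\neq 0$, with $Tf=\lambda f$. By Proposition~\ref{prop:3.1}\,a) we already know $\lambda\neq 0$.

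Applying $T$ to the eigenvalue equation $N$ times gives $T^Nf=\lambda^N f$, that is,
\[   (m_N(z)-\lambda^N)\,f(z)=0\qquad (z\in\D).\]
Since $f$ is holomorphic and not identically $0$ on $\D$, its zero set has no accumulation point in $\D$; hence $m_N-\lambda^N$ vanishes on a set with an accumulation point in $\D$, and by the identity theorem $m_N\equiv\lambda^N$ on $\D$. This is impossible: $m(z_0)=0$ is one of the factors of $m_N(z_0)$, so $m_N(z_0)=0$, while $\lambda^N\neq 0$ because $\lambda\neq 0$. This contradiction yields $\sigma_p(T)=\emptyset$.

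There is essentially no real obstacle here; the only points that need (routine) care are the reduction to $\lambda\neq 0$, for which we quote Proposition~\ref{prop:3.1}\,a) instead of re-proving that $\lambda=0$ would force $f$ to vanish on the nonempty open set $\beta\{z:m(z)\neq 0\}$, and the passage from $(m_N-\lambda^N)f\equiv 0$ to $m_N\equiv\lambda^N$, which is just the identity theorem on the complement of the discrete zero set of the nonzero holomorphic function $f$. Note that the periodicity of $\beta$ enters in an essential way through the identity $T^N f=m_N f$; the aperiodic case cannot be handled this way and will be treated separately by different arguments.
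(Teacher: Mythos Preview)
Your proof is correct and follows essentially the same route as the paper: reduce to $\lambda\neq 0$ via Proposition~\ref{prop:3.1}\,a), pass to the multiplication operator $T^N=m_N\cdot$, use the identity theorem on the complement of the isolated zeros of $f$ to conclude $m_N\equiv\lambda^N$, and derive a contradiction from $m_N(z_0)=0$. The paper's argument is organized in exactly this way.
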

\begin{proof}
We know from Proposition~\ref{prop:3.1}	that $0\not\in\sigma_p(T)$. Assume that $0\neq \lambda\in\sigma_p(T)$. Then $\lambda^N\in \sigma_p(T^N)$. There exists $0\neq f\in\Hol(\D)$ such that 
\[ m_N(z)f(z)=\lambda^N f(z)\mbox{ for all }z\in\D.  \]
This implies that $m_N(z)=\lambda^N$ if $f(z)\neq 0$. Since $f\neq 0$, $f$ has merely isolated zeros. Thus $m_N(z)=\lambda^N$ for all $z\in\D$. Consequently \[\lambda^N=m_N(z_0)=0,\] a contradiction.      
\end{proof}	
Now we can describe the point spectrum and the eigenspaces of $T$ in the periodic case. 
\begin{thm}\label{th:3.4}
	Let $\beta\in\C$, $N\in\N$, $N\geq 2$ such that $\beta^N=1$, $\beta^k\neq 1$ for $k=1,\cdots, N-1$. Let $m\in\Hol(\D)$, $m\neq 0$ and consider the operator 
	\[    T:\Hol(\D)\to \Hol(\D)\mbox{ given by }  (Tf)(z)=m(z)f(\beta z)\,\,\, (z\in\D).\]
	The following are equivalent.
\begin{itemize}
	\item[a)] $\sigma_p(T)\neq \emptyset$;
	\item[b)] there exist $a_0\in\C$, $f_j\in\Hol(\D)$, $j=1,\cdots,N-1$ such that $m=\exp m_1$ where 
	\[m_1(z)=a_0 +zf_1(z^N) +z^2f_2(z^N)+\cdots + z^{N-1}f_{N-1}(z^N)\,\,\, (z\in\D).\]
\end{itemize}	
In that case \[T^N =e^{Na_0}\Id,\,\, \sigma_p(T)=\{e^{a_0}\beta^k : k=0,\cdots , N-1\}=\sigma(T)\]
and 
\begin{eqnarray*}
\ker (T-e^{a_0} \beta^k \Id) & = & \{ f(z)=z^k\exp{g_1(z)}: \widehat{g_1}(n)=\frac{\widehat{m_1}(n)}{1-\beta^n} \mbox{ if }n\not\in N\N_0 \mbox{ and }\\
  &  & \limsup_{n\to\infty}  |\widehat{g_1}(Nn)|^{1/n}\leq 1 \},
\end{eqnarray*}
showing that the dimension of the eigenspaces are infinite. 
\end{thm}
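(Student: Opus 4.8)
The plan is to work with $T^N = M_{m_N}$, the multiplication by $m_N(z)=m(z)m(\beta z)\cdots m(\beta^{N-1}z)$, and to connect nonemptiness of $\sigma_p(T)$ with $m_N$ being a nonvanishing constant. First I would prove $(a)\Rightarrow(b)$. If $\sigma_p(T)\neq\emptyset$, pick $\lambda\neq 0$ (Proposition~\ref{prop:3.1}a) and a nonzero eigenfunction $f$ with $Tf=\lambda f$; then $m_N f=\lambda^N f$, so since $f$ has only isolated zeros, $m_N\equiv\lambda^N$, a nonvanishing constant. In particular $m$ never vanishes on $\D$ (this also re-proves Proposition~\ref{prop:3.3} in the contrapositive), so $m=\exp m_1$ for some $m_1\in\Hol(\D)$ (the disc is simply connected). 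The eigenvalue equation $m(z)f(\beta z)=\lambda f(z)$ and $m_N\equiv\lambda^N$ force a rigidity on $m_1$: writing $\lambda=e^{a_0}\beta^{\ell}$ and decomposing $m_1$ along the $N$ residue classes of its Taylor coefficients, $m_1(z)=\sum_{j=0}^{N-1} z^j g_j(z^N)$, the condition $m_N\equiv Ne^{a_0}$ (taking logs of $m_N=m(z)\cdots m(\beta^{N-1}z)$ and using $\sum_{k=0}^{N-1}\beta^{kn}=0$ unless $N\mid n$) kills all coefficients of $m_1$ in the class $n\equiv 0\pmod N$ except the constant term $a_0=\widehat{m_1}(0)$; thus $g_0\equiv a_0$ and we get exactly the form in $(b)$ with $f_j=g_j$ for $j\geq 1$.

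For $(b)\Rightarrow(a)$ and the explicit description, suppose $m=\exp m_1$ with $m_1$ of the stated form. Then $\sum_{k=0}^{N-1}\beta^{kn}\widehat{m_1}(n)=0$ for every $n\geq 1$ (the only surviving class $n\equiv 0\pmod N$ has coefficient $0$ for $n\geq 1$ by hypothesis), so $m_N(z)=\exp\big(\sum_{k=0}^{N-1}m_1(\beta^k z)\big)=e^{Na_0}$, whence $T^N=e^{Na_0}\Id$. This immediately gives $\sigma(T)\subset\{e^{a_0}\beta^k:k=0,\dots,N-1\}$, and since these are $N$ distinct scalars whose $N$-th power $e^{Na_0}$ lies in $\sigma(T^N)=m_N(\D)$, Proposition~\ref{prop:3.2} gives equality. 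To produce eigenfunctions, I look for $f=e_k\exp g_1$: the equation $Tf=e^{a_0}\beta^k f$ becomes $m_1(z)-g_1(z)+g_1(\beta z)=a_0$ after taking logs and cancelling $e_k$, i.e. on Taylor coefficients $\widehat{m_1}(n)+(\beta^n-1)\widehat{g_1}(n)=0$ for $n\geq 1$ and the $n=0$ identity $a_0-\widehat{g_1}(0)+\widehat{g_1}(0)=a_0$ which is automatic. For $n\not\in N\N_0$ this forces $\widehat{g_1}(n)=\widehat{m_1}(n)/(1-\beta^n)$; for $n\in N\N_0$ we have $\beta^n=1$ so $\widehat{m_1}(n)$ must vanish (true for $n\geq 1$ by the form of $m_1$) and $\widehat{g_1}(Nn)$ is a free parameter. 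The only constraint for $f\in\Hol(\D)$ is that $g_1$ converge on $\D$; the coefficients in the fixed classes are dominated (in modulus) by those of $m_1$ divided by $|1-\beta^n|$, but the $|1-\beta^n|$ are not bounded below in the aperiodic—wait, here $\beta$ is periodic, so $\{1-\beta^n:N\nmid n\}$ takes only finitely many nonzero values and is bounded below; hence those coefficients decay like $\widehat{m_1}(n)$ and the series converges on $\D$. The free coefficients $\widehat{g_1}(Nn)$ just need $\limsup_n|\widehat{g_1}(Nn)|^{1/n}\le 1$, which is exactly the stated condition, and letting them range over all such sequences gives an infinite-dimensional eigenspace for each $k$.

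The main obstacle I anticipate is the bookkeeping in the coefficient argument: carefully justifying that the identity $m_N\equiv Ne^{a_0}$ (in the log picture) is \emph{equivalent} to $\widehat{m_1}(Nn)=0$ for all $n\geq 1$, i.e. that the averaging operator $m_1\mapsto\frac1N\sum_{k=0}^{N-1}m_1(\beta^k z)$ is precisely the projection onto the $N\N_0$-supported Taylor part, and that no branch-of-logarithm ambiguity spoils the passage between $m$ and $m_1$ (it doesn't, since adding a constant $2\pi i\ell$ to $m_1$ changes $a_0$ but not the conclusion, and $\D$ simply connected makes $m_1$ well-defined up to such a constant). Once that projection identity is in hand, both implications and the eigenspace description fall out by matching Taylor coefficients; the convergence claims are routine given that $\beta$ is a root of unity.
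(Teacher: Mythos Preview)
Your plan is essentially the paper's: reduce to $T^N=M_{m_N}$, use the averaging identity $\sum_{k=0}^{N-1} m_1(\beta^k z)=N\sum_{N\mid n}\widehat{m_1}(n)z^n$ to characterize when $m_N$ is a nonzero constant (the paper isolates this as a separate lemma, which you inline), deduce $T^N=e^{Na_0}\Id$, and then match Taylor coefficients after taking logarithms to describe the eigenfunctions. Two minor points to correct. First, the constant value is $m_N\equiv e^{Na_0}$, not $Ne^{a_0}$; you have conflated the sum of the logs (which is $Na_0$) with the product (which is $e^{Na_0}$). Second, your eigenspace argument only yields the inclusion $\supset$, since you start from the ansatz $f=e_k\exp g_1$ rather than from an arbitrary eigenfunction; to justify the stated equality the paper begins with a general nonzero eigenfunction $f$, writes $f=z^{n_0}g$ with $g(0)\neq 0$, takes a local logarithm $g=e^{g_1}$ near $0$, and then evaluates the resulting identity at $z=0$ to pin down $n_0$ and the forced coefficients $\widehat{g_1}(n)$ for $n\notin N\N_0$. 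You should add this direction to your plan.
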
 
For the proof we need to characterize when the function $m_N$ is constant. 
\begin{lem}\label{lem:3.5}
Let $\beta\in\C$ such that $\beta^N=1$, $\beta^k\neq 1$ for $k=1,\cdots, N-1$ where $N\geq 2$. Let $m\in\Hol(\D)$. The following assertions are equivalent.
\begin{itemize}
	\item[(i)] $\exists c\in\C\setminus\{0\}$ such that $m_N(z)=c$ for all $z\in\D$;
	\item[(ii)] there exist $a_0\in\C$, $f_1,\cdots, f_{N-1}\in\Hol(\D)$ such that 
	\[ m=\exp(m_1)\mbox{ and }m_1(z)=a_0+zf_1(z^N)+\cdots + z^{N-1}f_{N-1}(z^N). \]
\end{itemize}
\end{lem}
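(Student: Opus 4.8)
The plan is to prove the two implications separately; in both directions everything reduces to a power‑series computation once one records the elementary identity
\[
   \sum_{j=0}^{N-1}\beta^{jn}=
   \begin{cases} N, & N\mid n,\\ 0, & N\nmid n,\end{cases}
\]
which holds because $\beta$ is a primitive $N$-th root of unity, so $\beta^n=1$ exactly when $N\mid n$, and otherwise $\sum_{j=0}^{N-1}(\beta^n)^j=\frac{(\beta^n)^N-1}{\beta^n-1}=0$.

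First I would dispatch the easy direction (ii)$\Rightarrow$(i). Writing $m=\exp(m_1)$ gives $m_N(z)=\exp\bigl(\sum_{j=0}^{N-1}m_1(\beta^j z)\bigr)$, and since $\beta^N=1$ one has $m_1(\beta^j z)=a_0+\sum_{k=1}^{N-1}\beta^{jk}z^k f_k(z^N)$. Summing over $j$ and applying the identity with $n=k\in\{1,\dots,N-1\}$ (where $N\nmid k$) annihilates every $f_k$-term, so $\sum_{j=0}^{N-1}m_1(\beta^j z)=Na_0$ and $m_N\equiv e^{Na_0}=:c\neq0$.

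For the substantial direction (i)$\Rightarrow$(ii), suppose $m_N\equiv c\neq0$. Since $m_N$ is the product of the functions $z\mapsto m(\beta^j z)$, none of these vanishes, so $m$ has no zero in $\D$; as $\D$ is simply connected, $m=\exp g$ with $g\in\Hol(\D)$. Then $\exp\bigl(\sum_{j=0}^{N-1}g(\beta^j z)\bigr)\equiv c$, so $\sum_{j=0}^{N-1}g(\beta^j z)$ is constant, equal to its value $Ng(0)$ at the origin (and consistently $c=m(0)^N=e^{Ng(0)}$). Expanding $g(z)=\sum_{n\ge0}\widehat g(n)z^n$ and using the identity coefficientwise yields
\[
   Ng(0)=\sum_{j=0}^{N-1}g(\beta^j z)=N\sum_{n\in N\N_0}\widehat g(n)z^n,
\]
which forces $\widehat g(n)=0$ for all $n\in N\N_0$ with $n\ge N$; set $a_0:=\widehat g(0)$. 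Regrouping the surviving coefficients by residue class mod $N$, define $f_k(w):=\sum_{m\ge0}\widehat g(k+mN)\,w^m$ for $k=1,\dots,N-1$; then $z^k f_k(z^N)=\sum_{n\equiv k\ (\mathrm{mod}\ N)}\widehat g(n)z^n$, hence $g=a_0+\sum_{k=1}^{N-1}z^k f_k(z^N)=m_1$ and $m=\exp(m_1)$ — provided each $f_k$ lies in $\Hol(\D)$.

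The only genuine obstacle is that last proviso: one must verify that the power series defining $f_k$ has radius of convergence at least $1$. I would argue as follows: $z^k f_k(z^N)=\sum_{n\equiv k\ (\mathrm{mod}\ N)}\widehat g(n)z^n$ is a subseries of the convergent expansion of $g$, hence converges for every $z\in\D$; since the map $z\mapsto z^N$ is onto $\D$, picking for a given $w\in\D$ some $z$ with $z^N=w$ and $|z|=|w|^{1/N}<1$ shows that $\sum_{m\ge0}\widehat g(k+mN)w^m$ converges, as required. (Alternatively, from $\limsup_n|\widehat g(n)|^{1/n}\le1$ one gets, for each $\varepsilon>0$, $\limsup_m|\widehat g(k+mN)|^{1/m}\le\lim_m(1+\varepsilon)^{N+k/m}=(1+\varepsilon)^N$, hence $\le1$.) Everything else — the bookkeeping with residue classes and the identification of $a_0$ — is routine.
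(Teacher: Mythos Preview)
Your proof is correct and follows essentially the same approach as the paper's: both directions hinge on the identity $\sum_{j=0}^{N-1}\beta^{jn}=N$ or $0$ according as $N\mid n$ or not, applied to the power-series expansion of a holomorphic logarithm of $m$. Your write-up is in fact slightly more careful than the paper's, since you explicitly justify that each $f_k\in\Hol(\D)$ (the paper omits this, and its definition $f_k(z)=\sum_{m\ge0}a_{mN+k}z^{mN}$ even contains a typo in the exponent).
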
 
\begin{proof}
$(i)\Rightarrow (ii)$ By $(i)$ $m(z)\neq 0$ for all $z\in\D$. Thus there exists $m_1\in\Hol(\D)$ such that $m=\exp(m_1)$ and 
\[   c=\exp (m_1(z) + m_1(\beta z) +\cdots +m_1(\beta^{N-1}z))\,\,\, \mbox{ for all }z\in\D. \]
Thus 
\begin{equation}\label{eq:const}
m_1(z)+m_1(\beta z)+\cdots + m_1(\beta^{N-1}z)=e^{c_1} +2i\pi k(z),
\end{equation} 	
where $c=e^{c_1}$ and $k:\D\to \Z$ is continuous, hence constant.  Let $m_1(z)=\sum_{n\geq 0} a_n z^n$. Then 
\[ c_2=\sum_{n\geq 0}  a_n z^n (1+\beta^n +\beta^{2n}+\cdots +\beta^{(N-1)n})\,\,\, \mbox{for all }z\in\D\mbox{ and some }c_2\in\C.  \]
Thus if $\beta^n=1$, i.e. $n\in N\N$, then $a_n=0$, whereas for $\beta^n\neq 1$, 
 since \[1+\beta^n +\beta^{2n}+\cdots +\beta^{(N-1)n}=\frac{1-\beta^{nN}}{1-\beta^n}=0,\]
  $m_1$ satisfies (\ref{eq:const}) if and only if $m_1$ is of the form 
 \[  m_1(z)=\sum_{n\geq 0,n\not\in N\N} a_n z^n.\] 
 Let $f_k(z)=\sum_{m\geq 0} a_{mN+k}z^{mN}$ for $k=1,\cdots, N-1$. Then $(ii)$ is fulfilled.\\
 $(ii)\Rightarrow (i)$ We have to show that 
 \[  M(z):=m_1(z)+m_1(\beta z)+\cdots +m(\beta^{N-1}z)\mbox{ is constant for all }z\in\D .  \]
 We have $M(0)=Na_0$. Since $1+\beta +\cdots +\beta^{N-1}=0$, one has 
 \[    zf_1(z^N) +\beta zf_1(\beta^N z^N)+\cdots +\beta^{N-1}zf_1(\beta^N z^N) =(1+\beta+\cdots +\beta^{N-1} )zf_1(z^N)=0.  \]
 	Similarly $1+\beta^k +\cdots + (\beta^k)^{N-1}=0$ and so 
\[ z^kf_k(z^N) +(\beta z)^k f_k((\beta z)^N) +\cdots + (\beta^{N-1}z)^kf_k((\beta z)^N)=0,    \]
for $k=1,\cdots,N-1$. This proves the claim.
\end{proof}	
\begin{proof}[Proof of Theorem~\ref{th:3.4}]
$a)\Rightarrow b)$ Assume that $\sigma_p(T)\neq \emptyset$. Then by Proposition~\ref{prop:3.3}, $m(z)\neq 0$ for all $z\in\D$ and $\sigma_p(T^N)\neq \emptyset$. 
Thus there exist $0\neq f\in\Hol(\D)$, $0\neq \lambda\in\C$ such that $m_N(z)f(z)=\lambda f(z)$ $(z\in\D)$. This implies that $m_N(z)=\lambda$ for all $z\in\D$, and $b)$ follows 
from Lemma~\ref{lem:3.5}.\\
$b)\Rightarrow a)$ It follows from $b)$ that $T^N=e^{Na_0}\Id$. Thus $\sigma_p(T^N)=\sigma(T^N)=e^{Na_0}$, which implies that 
\[  \sigma_p(T)   = \{e^{a_0}  \beta^k :k=0,\cdots, N-1 \}=\sigma(T),\]
since by Proposition~\ref{prop:3.1}, $\beta\sigma_p(T)\subset\sigma_p(T)$ and hence $\beta \sigma_p(T)=\sigma_p(T)$ (using $\beta^N =1$). Similarly $\beta\sigma(T)=\sigma(T)$ by Theorem~\ref{th:2.1}.

Now  let us describe the functions $f\in\Hol(\D)$, $f\neq 0$, solutions of the equation 
\begin{equation}\label{eq:eigen1}
\exp (m_1(z))f(\beta z)=e^{a_0}\beta^k f(z) \mbox{ with }m_1(0)=a_0, k\in\{0,\cdots,N-1\}.
\end{equation}
Since $f\neq 0$, there exists $n_0\in\N_0$ such that $f(z)=z^{n_0} g(z)$  on $\D$, where $g\in \Hol(\D)$ and $g(0)\neq 0$. Let $0<r<1$ such that $g(z)\neq 0$ for all $z\in D(0,r)$. 
Then there exists $g_1\in\Hol(D(0,r))$ such that  $g(z)=e^{g_1(z)}$ on $D(0,r)$. Then (\ref{eq:eigen1}) becomes 
\begin{equation}\label{eq:eigen2}
\exp (m_1(z))z^{n_0}\beta^{n_0}\exp(g_1(\beta z))=e^{a_0}\beta^k z^{n_0}\exp (g_1(z)) \mbox{ if }|z|<r,  
\end{equation} 
that is 
\begin{equation}\label{eq:eigen3}
\exp (m_1(z) + g_1(\beta z))=e^{a_0}\beta^{k-n_0}  \exp (g_1(z)) \mbox{ if }|z|<r.  
\end{equation} 
Taking $z=0$ it follows that  $n_0=k$ and then $(\ref{eq:eigen3})$ becomes 
\begin{equation}\label{eq:eigen4}
\exp (m_1(z) + g_1(\beta z))=e^{a_0} \exp (g_1(z)) \mbox{ if }|z|<r.  
\end{equation} 
Since $\widehat{m_1}(n)=0$ for all $n\in N\N$ and $\beta^N=1$, the only condition on $\widehat{g_1}(n)$ with $n\in N\N$ is given by the Hadamard formula on the radius of convergence, 
namely \[\limsup_{n\to\infty}|\widehat{g_1}(nN)|^{1/(nN)}\leq 1,\]
 which is equivalent to   $\limsup_{n\to\infty}|\widehat{g_1}(nN)|^{1/n}\leq 1$. Moreover, (\ref{eq:eigen4}) implies that 
 \[\widehat{g_1}(n)=\frac{\widehat{m_1}(n)}{1-\beta^n} \mbox{ if }n\not\in N\N_0.\]
 Since $m_1\in\Hol(\D)$ and since there exists $\delta>0$ such that  $\delta\leq |1-\beta^n|\leq 2$ for all $n\not\in N\N_0$, a  function $g_1$ such that 
 \[  \limsup_{n\to\infty}|\widehat{g_1}(nN)|^{1/n}\leq 1\mbox{ and }\widehat{g_1}(n)=\frac{\widehat{m_1}(n)}{1-\beta^n} \mbox{ if }n\not\in N\N_0 \] is in $\Hol(\D)$. This proves the last assertion
  of the theorem.  
 \end{proof}	
\subsection{The aperiodic case}
Here we assume that $\beta\in\C$, $|\beta|=1$ and $\beta^n\neq 1$ for all $n\in\N$. Our aim is to determine the point spectrum $\sigma_p(T)$ of $T$. We first show that also in the aperiodic 
case, $\sigma_p(T)=\emptyset$ whenever $m$ has a zero. 
\begin{prop}\label{prop:3.6}
If there exists $z_0\in\D$ such that $m(z_0)=0$, then $\sigma_p(T)=\emptyset$.  
\end{prop}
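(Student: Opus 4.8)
The plan is to run essentially the same argument as in Proposition~\ref{prop:3.3}, but without the luxury of passing to $T^N$; instead I would exploit the fact that the functional equation $Tf=\lambda f$ propagates the zero $z_0$ of $m$ along the full orbit $\{\beta^n z_0 : n\in\N_0\}$, which is now infinite. Concretely, suppose $0\neq f\in\Hol(\D)$ satisfies $m(z)f(\beta z)=\lambda f(z)$ for all $z\in\D$; by Proposition~\ref{prop:3.1}a) we may assume $\lambda\neq 0$. Plugging in $z$ with $\beta z = z_0$, i.e.\ $z=\beta^{-1}z_0$, gives $\lambda f(\beta^{-1}z_0)=m(\beta^{-1}z_0)f(z_0)$, which is not immediately a contradiction; the cleaner direction is to iterate forward.

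First I would rewrite the relation as $f(\beta z)=\dfrac{\lambda}{m(z)}f(z)$ on the open set where $m\neq 0$, and iterate: for each $n$,
\[
f(\beta^n z)=\frac{\lambda^n}{m(z)m(\beta z)\cdots m(\beta^{n-1}z)}\,f(z)
\]
wherever the denominator is nonzero. The key point is that $f$ cannot vanish on a nonempty open set (since $f\neq 0$ and $f\in\Hol(\D)$), so $f$ has only isolated zeros; pick a point $w\in\D$ close to $z_0$ with $f(w)\neq 0$ and $m(\beta^k w)\neq 0$ for the finitely many relevant $k$. Then the displayed identity forces, reading it at a $z$ with $\beta^{n}z$ landing near $z_0$ where $f$ is small but not using smallness directly, the better move: since $m(z_0)=0$, evaluate the original equation at $z=\beta^{n-1}z$ chosen so that $\beta^{n}(\,\cdot\,)=z_0$. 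Let me instead argue by contradiction more carefully: from $m(z)f(\beta z)=\lambda f(z)$, if $z_0$ is a zero of $m$ of order $k\ge 1$, then $\lambda f(z_0)=0$, so $f(z_0)=0$; comparing orders of vanishing of both sides at $z_0$, the left side vanishes to order $\ge k+\mathrm{ord}_{\beta z_0}(f)\cdot 0$ — one must track the order of $f$ at $z_0$ versus at $\beta z_0$. Writing $a=\mathrm{ord}_{z_0}f\ge 1$, the right-hand side vanishes to order exactly $a$ at $z_0$, while the left-hand side $m(z)f(\beta z)$ vanishes to order $k + \mathrm{ord}_{z_0}(f(\beta\,\cdot\,))=k+\mathrm{ord}_{\beta z_0}(f)$. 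Hence $\mathrm{ord}_{\beta z_0}(f)=\mathrm{ord}_{z_0}(f)-k$. But now replace $z_0$ by $\beta^{-1}z_0$: applying the same equation at $z=\beta^{-1}z_0$ gives $m(\beta^{-1}z_0)f(z_0)=\lambda f(\beta^{-1}z_0)$. The productive iteration is forward along $\beta^{-n}z_0$: I would show $\mathrm{ord}_{\beta^{-1}z_0}(f)=\mathrm{ord}_{z_0}(f)+\mathrm{ord}_{\beta^{-1}z_0}(m)\ge \mathrm{ord}_{z_0}(f)$, and more generally along the backward orbit the order of $f$ is nondecreasing, while along the forward orbit it must satisfy $\mathrm{ord}_{\beta^{n+1}z_0}(f) = \mathrm{ord}_{\beta^{n}z_0}(f) - \mathrm{ord}_{\beta^n z_0}(m)$; since $\mathrm{ord}_{z_0}m=k\ge1$ the forward orders strictly decrease at least once, forcing a negative order somewhere, which is impossible for a holomorphic function.

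To make this airtight I would phrase it as: define $n_j=\mathrm{ord}_{\beta^j z_0}(f)\in\N_0$ for $j\in\Z$ (all finite since $f\not\equiv0$, all points distinct by aperiodicity, all in $\D$ since $|\beta|=1$); the functional equation evaluated near $\beta^j z_0$ yields $n_{j} = n_{j+1} + \mathrm{ord}_{\beta^j z_0}(m)$ for every $j\in\Z$, hence in particular $n_{j}\ge n_{j+1}$ for all $j$, so $(n_j)_{j\in\Z}$ is nonincreasing in $j$; but summing $\sum_{j} \mathrm{ord}_{\beta^j z_0}(m)$ over, say, $j=0,1,\dots,N-1$ gives $n_0 - n_N = \sum_{j=0}^{N-1}\mathrm{ord}_{\beta^j z_0}(m)\ge \mathrm{ord}_{z_0}(m)\ge 1$ for every $N$, and since $m$ has at most finitely many zeros in any compact set but the orbit may be infinite one only needs $n_0 - n_N\ge 1$ for all large $N$, contradicting $n_N\ge 0$ and $n_0$ fixed. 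Therefore no such $f$ exists and $\sigma_p(T)=\emptyset$.

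The main obstacle I anticipate is purely bookkeeping: correctly matching the orders of vanishing on the two sides of $m(z)f(\beta z)=\lambda f(z)$ at a point of the orbit — in particular being careful that $\mathrm{ord}_{z_0}\big(f(\beta\,\cdot\,)\big)=\mathrm{ord}_{\beta z_0}(f)$ and getting the direction of the inequality right — and ensuring all orbit points $\beta^j z_0$ genuinely lie in $\D$ (true since $|\beta|=1$ and $|z_0|<1$) and are pairwise distinct (true by aperiodicity), so that $f$ is defined and holomorphic at each of them with a well-defined finite order. Once the sign conventions are pinned down, the contradiction $n_N<0$ is immediate and the proof is short.
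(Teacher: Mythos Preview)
Your final step does not close. From the relation $n_j = n_{j+1} + \mathrm{ord}_{\beta^j z_0}(m)$ you correctly get $n_0 - n_N = \sum_{j=0}^{N-1}\mathrm{ord}_{\beta^j z_0}(m)$, but this sum is \emph{bounded}: the orbit $\{\beta^j z_0\}$ lies on the compact circle $|z|=|z_0|<1$, and $m\not\equiv 0$ has only finitely many zeros there, so only finitely many summands are positive. Hence $n_N$ merely stabilizes at some value $\le n_0-1$, which is perfectly compatible with $n_N\ge 0$; the inequality $n_0-n_N\ge 1$ is not a contradiction, and no $n_N<0$ is ever forced.

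Ironically, the direction you dismissed as ``not immediately a contradiction'' is exactly the paper's argument, and it works in one line. From $m(z)f(\beta z)=\lambda f(z)$ and $m(z_0)=0$, $\lambda\neq 0$, you get $f(z_0)=0$. Now evaluate at $z=\overline{\beta}z_0$: the left side is $m(\overline{\beta}z_0)f(z_0)=0$, so $f(\overline{\beta}z_0)=0$. Iterating gives $f(\overline{\beta}^k z_0)=0$ for all $k\in\N_0$; by aperiodicity this is an infinite set, dense in the circle $|z|=|z_0|$, so $f\equiv 0$ by the identity theorem. Your order-of-vanishing machinery is salvageable along these same lines --- your recursion gives $n_j\ge n_0\ge 1$ for all $j\le 0$, hence infinitely many zeros of $f$ --- but it is an unnecessary detour once you realize that the backward iteration propagates the single zero $f(z_0)=0$ along the whole backward orbit without any need to track multiplicities.
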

\begin{proof}
By Proposition~\ref{prop:3.1} a), $0\not\in\sigma_p(T)$. Let $0\neq \lambda\in\C$. 	Assume that $m(z_0)=0$ and let $f\in\Hol(\D)$ such that 
\[  m(z)f(\beta z)=\lambda f(z)\mbox{ for all }z\in\D. \]
Then $f(z_0)=0$ and thus $0=m(\overline{\beta}z_0)f(z_0)=\lambda f(\overline{\beta}z_0)$. Iterating the argument we see that $f(\overline{\beta}^kz_0)=0$ for all $k\in\N_0$. Thus $f$ vanishes on 
$z_0\T$ which is the closure of $\{\overline{\beta}^k z_0\ : \ k \in\N_0 \}$. This implies that $f=0$.  
\end{proof}
Now assume that $m(z)\neq 0$ for all $z\in\D$. Then there exists $m_1\in\Hol(\D)$ such that 
\[   m(z)=\exp(m_1(z))   \mbox{ for all }z\in\D. \]
Let $m_1(z)=\sum_{n\geq 0}a_n z^n$ for all $z\in\D$.  Then the following holds.
\begin{thm}\label{th:3.7}
 \begin{itemize}
	\item[a)] If \begin{equation}\label{eq:3.1}
	                        \limsup_{n\to\infty}\left(     \frac{|a_n|}{|1-\beta^n|}\right)^{1/n}\leq 1
               	\end{equation} 
      then $\sigma_p(T)=\{ m(0)\beta^n :n\in\N_0\}$ and $\ker (T-m(0)\beta^n\Id)$ is one-dimensional, generated by the function
      \begin{equation}\label{eq:eigen42}
          f_n(z) = z^n \exp \left( g_1(z) \right) \text{ where } g_1(z) = \sum_{n \geqslant 0} \frac{a_n}{1-\beta^n} z^n ~ \text{ for all } z \in \D .
      \end{equation}

    \item[b)]   if    \begin{equation}\label{eq:3.2}
    \limsup_{n\to\infty}\left(     \frac{|a_n|}{|1-\beta^n|}\right)^{1/n}>1 
    \end{equation}   	
    then $\sigma_p(T)=\emptyset$. 
\end{itemize}
\end{thm}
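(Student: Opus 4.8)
The plan is to reduce the eigenvalue equation $Tf=\lambda f$ to a functional equation that, after factoring out the natural power of $z$, becomes a pure identity among power series coefficients. First I would handle $\lambda=0$ by Proposition~\ref{prop:3.1}~a). For $\lambda\neq 0$, suppose $0\neq f\in\Hol(\D)$ satisfies $m(z)f(\beta z)=\lambda f(z)$ on $\D$. Since $m$ is zero-free, $f$ is zero-free on a disc $D(0,r)$ (after extracting the order-$n_0$ zero at the origin, write $f(z)=z^{n_0}g(z)$ with $g(0)\neq 0$, so $g=e^{g_1}$ on some $D(0,r)$). Substituting and evaluating at $z=0$ forces $\lambda = m(0)\beta^{n_0}$, exactly as in the periodic case; this already shows $\sigma_p(T)\subset\{m(0)\beta^n : n\in\N_0\}$ in both cases. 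After this normalization the equation reduces, on $D(0,r)$, to
\[
 m_1(z)+g_1(\beta z)=g_1(z),\qquad |z|<r,
\]
where I have absorbed the constant $m_1(0)=a_0$ correctly (the $n=0$ term is consistent because $\lambda=m(0)\beta^{n_0}=e^{a_0}\beta^{n_0}$ and $a_0$ cancels against the additive constant freedom in $g_1$). Comparing coefficients gives, for $n\geq 1$, $a_n + \beta^n\widehat{g_1}(n) = \widehat{g_1}(n)$, i.e. $\widehat{g_1}(n)=\dfrac{a_n}{1-\beta^n}$ for all $n\geq 1$, which is forced since $\beta^n\neq 1$ in the aperiodic case. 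The constant $\widehat{g_1}(0)$ is free, corresponding to the scalar multiple of the eigenfunction.

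Next I would observe that this coefficient recursion pins down $g_1$ \emph{uniquely} up to an additive constant, hence the candidate eigenfunction is unique up to a scalar, so each eigenspace is at most one-dimensional. The remaining issue is purely one of convergence: the formally determined $g_1(z)=\sum_{n\geq 0}\frac{a_n}{1-\beta^n}z^n$ (with $\widehat{g_1}(0)$ arbitrary, say $0$) defines an element of $\Hol(\D)$ if and only if its radius of convergence is at least $1$, which by the Hadamard formula is exactly condition~(\ref{eq:3.1}), $\limsup_{n\to\infty}\bigl(|a_n|/|1-\beta^n|\bigr)^{1/n}\leq 1$. In case~a), $g_1\in\Hol(\D)$, so $f_n(z)=z^n\exp(g_1(z))\in\Hol(\D)$ as in (\ref{eq:eigen42}); one checks directly that $Tf_n=m(0)\beta^n f_n$ on all of $\D$ (not just $D(0,r)$) by analytic continuation of the identity $m_1(z)+g_1(\beta z)=g_1(z)$, which holds on $D(0,r)$ and hence on $\D$. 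Thus $\sigma_p(T)=\{m(0)\beta^n:n\in\N_0\}$ with one-dimensional eigenspaces. In case~b), the series for $g_1$ has radius of convergence strictly less than $1$, so no holomorphic $g_1$ on $\D$ can satisfy the forced coefficient relations; since any eigenfunction would, on a neighbourhood of $0$, be of the form $z^{n_0}e^{g_1}$ with these very coefficients, and $e^{g_1}$ extends holomorphically to $\D$ exactly when $g_1$ does (the logarithm being determined up to $2\pi i\Z$, a locally constant and hence globally constant ambiguity on the disc), we get a contradiction, so $\sigma_p(T)=\emptyset$.

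The step I expect to require the most care is the passage from the local identity on $D(0,r)$ to a statement on all of $\D$, and in particular the argument in case~b) that nonexistence of a global holomorphic $g_1$ genuinely obstructs existence of a global holomorphic eigenfunction $f$. The subtle point is that an eigenfunction $f$ is assumed holomorphic and zero-free only near $0$ a priori; away from $0$, $f$ may have zeros, so $\log f$ need not extend. The clean way around this is to argue at the level of $g=f/z^{n_0}$: the relation $m(z)g(\beta z)=\lambda g(z)$ shows $g$ and hence $f$ is zero-free wherever $g\circ(\text{rotation})$ is, and iterating $\beta^{-k}$ shows $g$ is zero-free on all of $\D$ (its zero set would be $\beta$-invariant and, being discrete and $\T$-a-dense-orbit-invariant, empty) — so in fact $\log g=g_1$ does extend to $\Hol(\D)$, forcing the radius-of-convergence condition and completing case~b). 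I would present this zero-freeness propagation explicitly, as it is the linchpin of the dichotomy.
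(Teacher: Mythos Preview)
Your proposal is correct and follows essentially the same approach as the paper: reduce the eigenvalue equation to a coefficient identity for a logarithm of the zero-free part, forcing $\lambda=m(0)\beta^{n_0}$ and $\widehat{g_1}(n)=a_n/(1-\beta^n)$, then invoke Hadamard's formula for the dichotomy. The only organizational difference is that the paper establishes the zero-freeness of $f$ on $\D\setminus\{0\}$ \emph{first} (via the same orbit-propagation argument you give at the end), so that $f(z)=z^M\exp(f_1(z))$ with $f_1\in\Hol(\D)$ globally from the outset, avoiding your local-to-global passage entirely; your route is equivalent but slightly more roundabout, and your instinct that the zero-freeness propagation is the linchpin is exactly right.
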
  
\begin{proof}
Let $\lambda\in\sigma_p(T)$. Then $\lambda\neq 0$ by Proposition~\ref{prop:3.1} and there exists $0\neq f\in\Hol(\D)$ such that 
\begin{equation}\label{eq:3;3}
m(z)f(\beta z)=\lambda f(z)\,\,\, (z\in\D).
\end{equation}  	
This implies that $f(z)\neq 0$ for all $z\in\D\setminus \{0\}$. In fact, assume that $f(z_0)=0$ where $z_0\in\D\setminus\{0\}$. Then by (\ref{eq:eigen2}), 
$f(\beta z_0)=0$. Iterating this gives $f(\beta^k z_0)=0$ for all $k\in\N$. Since the closure of $\{ \beta^k z_0 :k\in\N_0 \}$ is a circle, $f=0$, a contradiction.  Consequently there exist $M\in\N_0$ and  $f_1\in\Hol(\D)$ such that 
\[  f(z)=z^M \exp( f_1(z)  )\,\,\, (z\in\D). \]
Choose  $\gamma\in\C$ such that $\lambda=e^{\gamma}$. Let $f_1(z)=\sum_{n\geq 0} b_n z^n$. Then for $\beta=e^{i\alpha} $,
\[e^{m_1(z)}~ z^M ~ e^{i \alpha M} \exp \left( f_1(\beta z) \right) = e^\gamma ~z^M \exp(f_1(z))   .\]
So, there exists $k(z) \in \Z$ such that
\begin{equation}\label{eq:3.9}
    m_1(z) + i \alpha M + f_1(\beta z) = \gamma + f_1(z) + 2i\pi k(z).
\end{equation}
By continuity, $k(z) = k(0) :=k_0$ for all $z \in \D$. Letting $z=0$ in (\ref{eq:3.9}) yields 
\[ a_0 + i \alpha M - \gamma = 2i \pi k_0 .\]
Hence, $\exp \left(a_0 + i \alpha M - \gamma \right) = 1 $. Recall that $\exp(a_0) = m(0)$. Thus, $m(0)e^{i \alpha M} = e^\gamma = \lambda$. Now, (\ref{eq:3.9}) implies that
\[ \sum_{n \geqslant 1} a_n z^n + \sum_{n \geqslant 1} b_n \beta^n z^n - \sum_{n \geqslant 1} b_n z^n = 0.\]
Hence, $b_n = \frac{a_n}{1-\beta^n}$ for $n \in \N$. Thus, $\sum_{n \geqslant 1} \frac{a_n}{1-\beta^n} z^n$ has a radius of convergence $\geqslant 1$, i.e. 
$\limsup_{n\to\infty}\left(     \frac{|a_n|}{|1-\beta^n|}\right)^{1/n} \leqslant 1$. Thus, (\ref{eq:eigen42}) holds and $\lambda = m(0) e^{i \alpha M}$.

Conversely, assume (\ref{eq:3.1}). Let $M \in \N_0$ and let 
\[ f(z) := z^M \exp\left\{ \sum_{n=1}^{+ \infty} \frac{a_n}{1-\beta^n} z^n \right\} . \]
Then in virtue of (\ref{eq:3.1}), $f \in \Hol(\D)$, $f \neq 0$ and
\begin{eqnarray*}
(TF)(z) & = & m(z) f(\beta z) \\
& = & \exp \left( \sum_{n=0}^{+ \infty} a_n z^n \right) \beta^M z^M \exp \left( \sum_{n=0}^{+ \infty} \frac{a_n}{1-\beta^n} z^n \right)\\
& = & m(0) \beta^M z^M \exp \left( \sum_{n=0}^{+ \infty} \left( 1+ \frac{a_n}{1-\beta^n} \right) z^n \right)\\
& = & m(0) \beta^M f(z).
\end{eqnarray*}
This shows that $m(0) \beta^M \in \sigma_p(T)$ for all $M \in \N_0$ if (\ref{eq:3.1}) holds.
\end{proof}	

\section{The spectrum of composition operators induced by aperiodic rotation on $\Hol(\D)$: diophantine arguments}\label{sec:4}
Let $\beta\in\C$, $|\beta|=1$, and assume that $\beta$ is not a root of unity, i.e. $\beta^n\neq 1$ for all $n\in\N$. 
Let $\varphi:\D\to\D$ be given by  $\varphi(z)=\beta z$. Such $\varphi$ is called an \emph{aperiodic rotation}. Then define 
\[  C_\varphi:\Hol(\D)\to \Hol(\D) \mbox{ given by }C_\varphi f=f\circ \varphi .\]
We first determine the point spectrum. 
\begin{prop}\label{prop:4.1}
$\sigma_p(C_\varphi)=\{\beta^n: n\in\N_0\}$	
\end{prop}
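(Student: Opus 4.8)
The plan is to compute the eigenvalues of $C_\varphi$ directly by working with power series, since $C_\varphi$ is just the special case $m \equiv 1$ (so $m(0) = 1$) of the operator $T$ studied in Section~\ref{sec:3}. First I would observe that $C_\varphi e_k = \beta^k e_k$ for every $k \in \N_0$, which immediately gives the inclusion $\{\beta^n : n \in \N_0\} \subset \sigma_p(C_\varphi)$; this is also a formal consequence of Corollary~\ref{cor:2.2}(b) together with the fact that each $\beta^n$ is actually an eigenvalue here.

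For the reverse inclusion, suppose $\lambda \in \sigma_p(C_\varphi)$, so there is $f \in \Hol(\D)$, $f \neq 0$, with $f(\beta z) = \lambda f(z)$ for all $z \in \D$. Writing $f(z) = \sum_{k \geq 0} c_k z^k$ and comparing coefficients gives $\beta^k c_k = \lambda c_k$ for every $k$. Since $f \neq 0$, some $c_k \neq 0$, and for that index $\lambda = \beta^k$. Hence $\lambda \in \{\beta^n : n \in \N_0\}$, and moreover (using that $\beta$ is aperiodic, so the $\beta^k$ are pairwise distinct) the eigenspace for $\beta^k$ is exactly $\C e_k$, one-dimensional. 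This matches the $m \equiv 1$ specialization of Theorem~\ref{th:3.7}(a): here $m_1 \equiv 0$, so $a_n = 0$ for all $n$, the Hadamard condition \eqref{eq:3.1} is trivially satisfied, $g_1 \equiv 0$, and the eigenfunction $f_n$ from \eqref{eq:eigen42} is just $e_n$, with eigenvalue $m(0)\beta^n = \beta^n$.

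There is essentially no obstacle here: the only point requiring any care is noting that aperiodicity of $\beta$ makes the powers $\beta^k$ distinct, which is what forces the eigenfunctions to be monomials and the eigenspaces to be one-dimensional. Everything reduces to the coefficient-matching argument above, or alternatively to a direct citation of Theorem~\ref{th:3.7}(a) in the case $m \equiv 1$. I would present the short self-contained coefficient argument rather than the citation, since it is cleaner and does not require unwinding the notation of the general theorem.
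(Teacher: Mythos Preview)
Your proposal is correct and is precisely the ``easy direct proof'' the paper alludes to: the paper itself does not spell out an argument but simply cites Theorem~\ref{th:3.7} (or Bonet's paper), whereas you both give the short coefficient-matching computation and note that it agrees with the $m\equiv 1$ specialization of Theorem~\ref{th:3.7}(a). There is nothing to add.
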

\begin{proof}
This a corollary of Theorem~\ref{th:3.7} or \cite[Proposition~1]{Bonet} 	
with an easy direct proof. 
\end{proof}	

We start by the following result from \cite{Bonet} and add a proof to keep the article self contained.

\begin{prop}\label{prop:4.1prime}
$\sigma (C_\varphi) \subset \T$.
\end{prop}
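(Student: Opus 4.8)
The plan is to show that if $|\lambda|\neq 1$ then $\lambda\in\rho(C_\varphi)$, by explicitly constructing the resolvent via a power series in $T=C_\varphi$ (or $T^{-1}$) and checking convergence in $\Hol(\D)$. First note that $C_\varphi$ is invertible on $\Hol(\D)$: its inverse is $C_{\varphi^{-1}}$, i.e. $(C_\varphi^{-1}g)(z)=g(\bar\beta z)$, since $\varphi(z)=\beta z$ is an automorphism of $\D$. Moreover $C_\varphi$ is a \emph{bounded} operator with respect to each seminorm $p_r(f)=\sup_{|z|\le r}|f(z)|$: indeed $|\beta|=1$ gives $p_r(C_\varphi f)=p_r(f)$ for every $r<1$, so $C_\varphi$ is an isometry for each seminorm, and the same holds for $C_\varphi^{-1}$.

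The key step is then the following Neumann-series argument, done seminorm by seminorm. If $|\lambda|>1$, write $\lambda\Id-C_\varphi=\lambda(\Id-\lambda^{-1}C_\varphi)$; since $p_r(\lambda^{-n}C_\varphi^n f)=|\lambda|^{-n}p_r(f)$ and $|\lambda|^{-1}<1$, the series $\sum_{n\ge0}\lambda^{-n-1}C_\varphi^n f$ converges in the Fréchet topology of $\Hol(\D)$ (it is Cauchy in every seminorm, and $\Hol(\D)$ is complete), and its sum is easily checked to be $(\lambda\Id-C_\varphi)^{-1}f$. If $0\neq|\lambda|<1$, instead factor $\lambda\Id-C_\varphi=-C_\varphi(\Id-\lambda C_\varphi^{-1})$ and use that $p_r(\lambda^n C_\varphi^{-n}f)=|\lambda|^n p_r(f)$ with $|\lambda|<1$, so $\sum_{n\ge0}\lambda^n C_\varphi^{-n}f$ converges in $\Hol(\D)$ and gives, after composing with $-C_\varphi^{-1}$, the resolvent. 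The case $\lambda=0$ is covered since $C_\varphi$ is bijective. Hence $\{\lambda:|\lambda|\neq1\}\subset\rho(C_\varphi)$, equivalently $\sigma(C_\varphi)\subset\T$.

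The main point requiring care is the interchange of the infinite sum with the operator $\lambda\Id-C_\varphi$: one must verify that $(\lambda\Id-C_\varphi)$ applied term-by-term telescopes, and that the resulting map $\lambda\mapsto R_\lambda$ lands in $\mathcal L(\Hol(\D))$ — but continuity of $R_\lambda$ is automatic from the closed graph theorem as already noted in Section~\ref{sec:2}, so it suffices to exhibit a two-sided algebraic inverse and the seminorm estimates then guarantee the series defines an element of $\Hol(\D)$. There is no real obstacle here beyond bookkeeping; the essential input is simply that $C_\varphi$ and $C_\varphi^{-1}$ are seminorm-isometries because $|\beta|=1$, which makes the spectral radius in each seminorm equal to $1$ and forces the spectrum into the unit circle.
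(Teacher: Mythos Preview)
Your proof is correct, but it takes a different route from the paper's. The paper argues coefficient-wise: given $g(z)=\sum_n b_n z^n\in\Hol(\D)$ and $|\lambda|\neq 1$, it solves $C_\varphi f-\lambda f=g$ by setting $a_n=b_n/(\beta^n-\lambda)$ and observing that $|\beta^n-\lambda|\geq \bigl|1-|\lambda|\bigr|>0$, so $\limsup_n|a_n|^{1/n}\leq 1$ and $f(z)=\sum_n a_n z^n$ lies in $\Hol(\D)$. Your argument is instead operator-theoretic: you exploit that $C_\varphi$ (and its inverse) is an isometry for every seminorm $p_r$ --- a consequence of $|\beta|=1$ --- and run a Neumann series in the Fr\'echet space, treating the two cases $|\lambda|>1$ and $|\lambda|<1$ separately. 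The paper's approach is more explicit and handles both cases at once via the uniform lower bound on $|\beta^n-\lambda|$; yours is more structural and would apply verbatim to any operator that is a simultaneous isometry for the defining seminorms, without needing the monomial eigenbasis. Both are short and elementary, and the underlying reason they work is the same fact that $|\beta|=1$.
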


\begin{proof}
    We shall prove that $\C - \T \subset \rho(C_\varphi)$. Let $\lambda \in \C$ such that $\vert \lambda \vert \neq 1$. Let $g \in \Hol(\D)$ and write $g(z) = \sum_{n \geqslant 0} b_n z^n$ with $\limsup_{n \rightarrow + \infty} \vert b_n \vert^{\frac{1}{n}} \leqslant 1$. We aim to solve
    \begin{equation}\label{eq:4.1'.1}
        C_\varphi f - \lambda f = g
    \end{equation}
    with $f \in \Hol(\D)$ given by $f(z) = \sum_{n \geqslant 0} a_n z^n$. From (\ref{eq:4.1'.1}), we get
    \[ \forall n \in \N_0, ~ a_n \beta^n - \lambda a_n = b_n \Leftrightarrow a_n = \frac{b_n}{\beta^n - \lambda} .\]
    However, for each $n \in \N_0$, $ \left\vert \beta^n - \lambda \right\vert \geqslant \left\vert 1 - \vert \lambda \vert \right\vert $. Thus, $\limsup_{n \rightarrow + \infty} \vert a_n \vert^{\frac{1}{n}} = \limsup_{n \rightarrow + \infty} \vert b_n \vert^{\frac{1}{n}} \leqslant 1$. We have proved that "$\forall g \in \Hol(\D), \exists ! f \in \Hol(\D)$ such that $C_\varphi f - \lambda f = g$". Hence, $\lambda \in \rho(T)$.
\end{proof}

Inspired by Bonet \cite{Bonet} we want to study the spectrum of $C_\varphi$ if $\beta=e^{2i\pi\xi}$ where $\xi$ is a diophantine number. Let $\tau>2$ and 
\[ {\mathcal D}(\tau):=\{   \xi\in\R:\exists   \gamma>0\mbox{ such that }|p/q -\xi|\geq \gamma q^{-\tau}\mbox{ for all }p\in\Z,q\in\N\}\] be 
the set of all \emph{diophantine numbers} of order $\tau$. It is well-known that $\R\setminus  \cup_{\tau>2}{\mathcal D}(\tau)$ has Lebesgue measure $0$. 
\begin{thm}\label{th:4.2}
	Let $\tau>2$, $\xi\in {\mathcal D}(\tau)$. Define $\varphi(z)=e^{2i\pi\xi} z $ and consider the operator $C_\varphi$ on $\Hol(\D)$ given
	 by $C_\varphi f=f\circ \varphi$. Then, 
	 \[ \left\{ \beta^n : n \in \N_0 \right\} \subset \sigma(C_\varphi) \subset \left\{ e^{2i\pi x} : x \notin \Q \right\} \cup \left\{ 1 \right\} . \]
\end{thm}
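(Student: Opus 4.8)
The plan is to prove the two inclusions separately. The left inclusion is essentially free: by Proposition~\ref{prop:4.1} one has $\sigma_p(C_\varphi)=\{\beta^n:n\in\N_0\}$, and every eigenvalue lies in the spectrum, so $\{\beta^n:n\in\N_0\}\subset\sigma(C_\varphi)$ (one could equally invoke Corollary~\ref{cor:2.2}b) with $m\equiv 1$, $m(0)=1$). For the right inclusion I would first rewrite the complement: $\C\setminus\bigl(\{e^{2i\pi x}:x\notin\Q\}\cup\{1\}\bigr)$ equals $(\C\setminus\T)\cup\{\lambda\in\T:\lambda^q=1\text{ for some }q\geq 2\}$, i.e. the non-unimodular numbers together with the roots of unity different from $1$. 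Since $\C\setminus\T\subset\rho(C_\varphi)$ by Proposition~\ref{prop:4.1prime}, it suffices to show that every $\lambda=e^{2i\pi p/q}$ with $p\in\Z$ and $q\geq 2$ belongs to $\rho(C_\varphi)$.

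So fix such a $\lambda$. Since $\xi$ is irrational, $n\xi\notin\Q$ for $n\geq 1$, hence $\beta^n=e^{2i\pi n\xi}\neq\lambda$ for every $n\in\N_0$ (the case $n=0$ uses $\lambda\neq 1$, which is where $q\geq 2$ enters). Given $g(z)=\sum_{n\geq 0}b_nz^n\in\Hol(\D)$, comparing Taylor coefficients in $C_\varphi f-\lambda f=g$ exactly as in the proof of Proposition~\ref{prop:4.1prime} forces the only possible solution to be $f(z)=\sum_{n\geq 0}a_nz^n$ with $a_n=b_n/(\beta^n-\lambda)$; this also yields injectivity of $\lambda\Id-C_\varphi$ at once, so the whole matter reduces to showing $f\in\Hol(\D)$, that is $\limsup_{n}|a_n|^{1/n}\leq 1$, which amounts to a lower bound on $|\beta^n-\lambda|$ decaying at most polynomially in $n$.

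This lower bound is the heart of the argument. Writing $\|x\|$ for the distance from $x$ to $\Z$, one has $|e^{2i\pi s}-e^{2i\pi t}|=2|\sin\pi(s-t)|\geq 4\|s-t\|$, hence $|\beta^n-\lambda|\geq 4\|n\xi-p/q\|$. Multiplying the argument by $q$ and using $q\Z\subset\Z$ gives $\|n\xi-p/q\|=\tfrac1q\dist(qn\xi-p,q\Z)\geq\tfrac1q\|qn\xi\|$; and the diophantine hypothesis $\xi\in{\mathcal D}(\tau)$, applied with denominator $qn$, yields $\|qn\xi\|\geq\gamma(qn)^{1-\tau}$ for all $n\geq 1$. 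Combining these, $|\beta^n-\lambda|\geq 4\gamma q^{-\tau}n^{1-\tau}$, so $|a_n|\leq(4\gamma)^{-1}q^{\tau}\,n^{\tau-1}|b_n|$ for $n\geq 1$. Since $n^{(\tau-1)/n}\to 1$, this forces $\limsup_n|a_n|^{1/n}\leq\limsup_n|b_n|^{1/n}\leq 1$, hence $f\in\Hol(\D)$, $\lambda\Id-C_\varphi$ is bijective, and $\lambda\in\rho(C_\varphi)$.

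The only genuine obstacle is precisely that chain of inequalities: the reduction $\|n\xi-p/q\|\geq\tfrac1q\|qn\xi\|$ is the crucial observation, after which the diophantine condition applies verbatim and the polynomial loss $n^{\tau-1}$ is harmless because it does not affect radii of convergence. Everything else---solvability of the coefficient recursion, uniqueness of the solution, and the a priori containment $\sigma(C_\varphi)\subset\T$---is already available, the last from Proposition~\ref{prop:4.1prime}.
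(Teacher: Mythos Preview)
Your proof is correct and follows essentially the same route as the paper: reduce to $\T$ via Proposition~\ref{prop:4.1prime}, use Proposition~\ref{prop:4.1} for the eigenvalue inclusion, and for a nontrivial root of unity $\lambda=e^{2i\pi p/q}$ derive the polynomial lower bound $|\beta^n-\lambda|\ge 4\gamma q^{-\tau}n^{1-\tau}$ from the diophantine hypothesis, which is harmless for radii of convergence. The only cosmetic difference is packaging: you work with the nearest-integer distance $\|\cdot\|$ and the clean inequality $\|n\xi-p/q\|\ge q^{-1}\|qn\xi\|$, whereas the paper explicitly picks the nearest integer and rewrites $|\xi k-r-p|$ as $k\,\bigl|\xi-\tfrac{p_0+pq_0}{q_0k}\bigr|$ before invoking the diophantine bound with denominator $q_0k$; the resulting constants and exponents are identical.
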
 
\begin{proof}Thanks to Propositions \ref{prop:4.1} and \ref{prop:4.1prime}, it suffices to show that
	 \[   \{\lambda\in\C\setminus\{1\}:\exists r\in\Q, \lambda=e^{2i\pi r}\}\subset \rho(C_\varphi) .  \]
	 Let $\lambda = e^{2 i \pi r} \neq 1$ with $r \in \Q$.

Let $\gamma>0$ such that $|\xi-p/q|\geq \gamma q^{-\tau}$ for all $p\in\Z$ and $q\in\N$. Let $r=\frac{p_0}{q_0}$, $p_0\in\Z$, $q_0\in\N$. We claim that 
\[  \frac{1}{|\lambda -\beta^k|}\leq \left( \frac{q_0^{\tau}}{4\gamma}      \right) k^{\tau-1}\,\,\, (k\in\N),    \]
where $\beta:=e^{2i\pi\xi}$. Indeed, since $\sin$ is concave on $[0,\pi/2]$, one has $|\sin x|\geq \frac{2}{\pi}|x|$ for $|x|\leq \frac{\pi}{2}$. Let $p \in \Z$ be such that $\vert (\xi k - r) - p \vert \leqslant \frac{1}{2}$ (we have $p = \lfloor \xi k - r \rfloor$, or  $p = \lfloor \xi k - r \rfloor +1$).

Using these estimates, we get
\begin{eqnarray*}
|\beta^k -\lambda| & = & |e^{2i\pi\xi k}-e^{2i\pi r}|= |e^{2i\pi\xi k}-e^{2i\pi( r +p)}|\\
   &=  & |e^{2i\pi(\xi k-r-p)}-1|=  |e^{i\pi(\xi k-r-p)}-e^{-i\pi(\xi k-r-p)}|\\
     & = & 2|\sin (\pi (\xi k-r-p))|\\
     & \geq & 2\frac{2}{\pi}\pi |\xi k -r-p|
\end{eqnarray*}
since $p \in \Z$ is chosen such that $\vert \xi k - r - p \vert \leqslant \frac{1}{2}$.
Consequently,
\begin{eqnarray*}
\frac{1}{|\beta^k-\lambda|} & \leq & \frac{1}{4}\frac{1}{|\xi k-r-p|} 
  =  	\frac{1}{4k}\frac{1}{\left| \xi -\frac{p_0 +pq_0}{q_0 k} \right|}\\
  & \leq & \frac{1}{4k} \frac{1}{\gamma} \frac{1}{(q_0k)^{-\tau}}=ck^{\tau-1},
\end{eqnarray*}
where $c:=\frac{1}{4\gamma} q_0^\tau$. 

Thus 
\[\limsup_{k\to\infty}\frac{1}{|\beta^k -\lambda|^{1/k}}\leq \limsup_{k\to\infty}c^{1/k}k^{(\tau -1)/k}\leq 1.\]
Let $g(z)=\sum_{n=0}^\infty a_n z^n$ such that $g\in\Hol (\D)$. Then $f(z)=\sum_{n\geq 0}^\infty \frac{a_n}{\beta^n-\lambda}z^n$ defines $f\in\Hol(\D)$ and $f\circ \varphi (z)-\lambda f(z)=g(z)$ for all $z\in\D$. 
Thus $C_\varphi -\lambda\Id$ is surjective. From Proposition~\ref{prop:4.1} we know that  $C_\varphi -\lambda\Id$ is
injective. The theorem is proved. 
\end{proof}	
Thus, in the situation of Theorem~\ref{th:4.2}, for $\beta=e^{2i\pi\xi}$, 
\[   \{  \beta^n \ : \ n\in\N_0\}\subset \sigma(C_\varphi) \subset \T\setminus \{ \lambda\neq 1:\exists r\in\Q,\lambda=e^{2i\pi r}   \}.  \]
In particular, the closure of $\sigma(C_\varphi)$ is $\T$ and the closure of $\T\setminus \sigma(C_\varphi)$ is also $\T$. 
\begin{rem}\label{rem:4.3}
	The space $\Hol_0(\D):=\{   f\in\Hol(\D):f(0)=0\}$ is invariant under $C_\varphi$. Bonet considered the restriction $T_0$ 
	 of $C_\varphi$ to $\Hol_0(\D)$ and showed that under the hypothesis of Theorem~\ref{th:4.2}, $1\not\in \sigma(T_0)$ (see \cite[Corollary 3]{Bonet}). By Lemma~\ref{lem:2.3} this implies that 
	 \begin{equation}\label{eq:4.1}
	 \overline{\beta}\not\in \sigma(C_\varphi).
	 \end{equation} 
\end{rem}
\begin{rem}\label{rem:4.4}
Let $\xi\in D(\tau)$, $\beta=e^{2i\pi\xi}$. Let $r_0\in\Q$ such that $\lambda_0:=e^{2i\pi r_0}\neq 1$. Then, by Theorem~\ref{th:4.2}, $\lambda_0\in\rho(C_\varphi)$. But the resolvent is not strongly continuous at $\lambda_0$. In fact, if $\lambda_n\not\in\T$ and $\lambda_n\to\lambda_0$ then for all $0<r<1$ there exists $f\in A(\D)$ such that 
\[   \sup_{n\in\N} \|R_{\lambda_n} f\|_{A(r\D)}=\infty. \]
Here $A(r\D)={\mathcal C}(r\overline{\D})\cap \Hol(r\D)$ and for $g\in \Hol(\D)$, $\|g\|_{A(r\D)}=\sup_{|z|\leq r}|g(z)|$.    
\end{rem}
\begin{proof}
Assume that there exists $r\in (0,1)$ with $\sup_{n\in\N} \|R_{\lambda_n}f\|_{A(r\D)}<\infty$ for all $f\in A(\D)$. 
Then by the uniform boundedness principle there exists $c>0$ such that 
\[  \| R_{\lambda_n}f\|_{A(r\D)}\leq c\|f\|_{A(\D)}  \]
for all $f\in A(\D)$ and all $n\in\N$. Choose $f=e_k$. Since $C_\varphi e_k=\beta^ke_k$, $R_{\lambda_n}e_k =\frac{1}{\lambda_n -\beta}e_k$. Hence 
\[    r\frac{1}{|\lambda_n -\beta^k|}=\| R_{\lambda_n} e_k\|_{A(r\D)}\leq c\|e_k\|_{A(\D)} =c\]
for all $n\in\N$ and  $k\in\N$. Hence 
\[  \frac{1}{|\lambda_n-\lambda_0|}\leq \sup_{|w|=1}\frac{1}{|\lambda_n-w|} =\sup_k \frac{1}{|\lambda_n-\beta^k|} \leq \frac{c}{r}.   \]
This is a contradiction since $|\lambda_n-\lambda_0|\to 0$ as $n\to\infty$. 
\end{proof}	
The phenomenon expressed in Theorem~\ref{th:4.2} persists even for a larger class of $\xi$ than the diophantine numbers, considered for instance by Lang in \cite{Lang71}.
\begin{thm}\label{th:4.5}
    Let $g~:~\Z \rightarrow \R^+$ such that $\forall q_0 \in \N$, $\liminf_{k \to \infty}g(q_0 k)^{1/k}\geqslant 1 $. Let $\beta=e^{2i\pi\xi}$, where $|\xi-p/q|\geq g(q)\geq 0$ for all $p\in\Z$, $q\in\N$.
	 Then, 
	 \[ \left\{ \beta^n : n \in \N_0 \right\} \subset \sigma(C_\varphi) \subset \left\{ e^{2i\pi x} : x \notin \Q \right\} \cup \left\{ 1 \right\} . \] 
\end{thm}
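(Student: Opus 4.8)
The plan is to follow the strategy of the proof of Theorem~\ref{th:4.2} verbatim, replacing the diophantine inequality $|\xi-p/q|\geq\gamma q^{-\tau}$ by the weaker bound $|\xi-p/q|\geq g(q)$ and pushing the control of $1/|\beta^k-\lambda|$ through the hypothesis on $g$. First I would record that $\xi$ is necessarily irrational: if $\xi=p_1/q_1$ were rational, then $0=|\xi-p_1k/(q_1k)|\geq g(q_1k)$ would force $g(q_1k)=0$ for every $k\in\N$, whence $\liminf_{k\to\infty}g(q_1k)^{1/k}=0<1$, contradicting the assumption. Hence $\beta=e^{2i\pi\xi}$ is not a root of unity, so $C_\varphi$ is genuinely induced by an aperiodic rotation; Proposition~\ref{prop:4.1} then gives $\sigma_p(C_\varphi)=\{\beta^n:n\in\N_0\}$, which yields the left-hand inclusion $\{\beta^n:n\in\N_0\}\subset\sigma(C_\varphi)$ (the point spectrum being contained in the spectrum) and the injectivity fact needed below. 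By Proposition~\ref{prop:4.1prime} we also have $\sigma(C_\varphi)\subset\T$, so all that remains is to show that every $\lambda=e^{2i\pi r}$ with $r\in\Q$ and $\lambda\neq1$ lies in $\rho(C_\varphi)$.

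Fix such a $\lambda$, write $r=p_0/q_0$ with $p_0\in\Z$, $q_0\in\N$, and bound $|\beta^k-\lambda|$ below for $k\in\N$. Choosing $p\in\Z$ with $|\xi k-r-p|\leq\tfrac12$ and using, exactly as in Theorem~\ref{th:4.2}, the concavity estimate $|\sin x|\geq\tfrac{2}{\pi}|x|$ on $[-\tfrac{\pi}{2},\tfrac{\pi}{2}]$, one obtains
\[ |\beta^k-\lambda|=2\,|\sin(\pi(\xi k-r-p))|\geq 4\,|\xi k-r-p|=4k\left|\xi-\frac{p_0+pq_0}{q_0k}\right|\geq 4k\,g(q_0k), \]
the last step being the hypothesis on $g$ applied to the integer $p_0+pq_0$ and the denominator $q_0k$. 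As noted above, $g(q_0k)>0$ for all large $k$, and since $(4k)^{1/k}\to1$ while $\liminf_{k\to\infty}g(q_0k)^{1/k}\geq1$ by assumption, this gives
\[ \limsup_{k\to\infty}\frac{1}{|\beta^k-\lambda|^{1/k}}\leq\limsup_{k\to\infty}\frac{1}{(4k)^{1/k}\,g(q_0k)^{1/k}}\leq1. \]

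To finish, given any $h\in\Hol(\D)$ with $h(z)=\sum_{n\geq0}a_nz^n$ (so $\limsup_n|a_n|^{1/n}\leq1$), the function $f(z):=\sum_{n\geq0}\frac{a_n}{\beta^n-\lambda}z^n$ satisfies $\limsup_n|a_n/(\beta^n-\lambda)|^{1/n}\leq\limsup_n|a_n|^{1/n}\cdot\limsup_n|\beta^n-\lambda|^{-1/n}\leq1$, hence $f\in\Hol(\D)$, and a term-by-term computation gives $C_\varphi f-\lambda f=h$. Thus $\lambda\Id-C_\varphi$ is surjective; it is injective because $\lambda\notin\sigma_p(C_\varphi)=\{\beta^n:n\in\N_0\}$ (an equality $e^{2i\pi r}=e^{2i\pi n\xi}$ would force $\xi\in\Q$ when $n\neq0$ and $\lambda=1$ when $n=0$, both excluded). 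Hence $\lambda\in\rho(C_\varphi)$, completing the proof. I do not anticipate a genuine obstacle: the argument is a routine adaptation of Theorem~\ref{th:4.2}. The only point needing care is the passage from $|\xi-p/q|\geq g(q)$ to the $k$-th-root estimate — precisely where the hypothesis ``$\liminf_{k\to\infty}g(q_0k)^{1/k}\geq1$ for every $q_0$'' intervenes — together with the preliminary observation that this hypothesis already rules out rational $\xi$.
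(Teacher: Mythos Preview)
Your proof is correct and follows essentially the same route as the paper's: reduce via Propositions~\ref{prop:4.1} and \ref{prop:4.1prime} to showing $e^{2i\pi r}\in\rho(C_\varphi)$ for rational $r\neq 0$, obtain the lower bound $|\beta^k-\lambda|\geq 4k\,g(q_0k)$ exactly as in Theorem~\ref{th:4.2}, and conclude by the $k$-th-root estimate using $\liminf_{k}g(q_0k)^{1/k}\geq 1$. Your explicit verification that the hypothesis on $g$ forces $\xi$ to be irrational is a welcome clarification that the paper leaves implicit.
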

This result is a generalization of Theorem \ref{th:4.2} : for $\tau > 2$, $\gamma >0$, and $g(q) = \gamma q^{- \tau}$, one gets the result stated in Theorem \ref{th:4.2}.
\begin{proof}
Thanks to Propositions \ref{prop:4.1} and \ref{prop:4.1prime}, it suffices to show that if $\lambda = e^{2i\pi r}$ with $r \in \Q$ and $\lambda \neq 1$, then $\lambda \in \rho(C_\varphi)$. Let $\lambda= e^{2i\pi r}$ be such a complex number. Let $p_0 \in \Z$ and $q_0 \in \N$ such that $r = \frac{p_0}{q_0}$. Let $k \in \N$. Let $p \in \Z$ be such that $\left\vert (\xi k - r) - p \right\vert \leqslant \frac{1}{2}$. Then, with the same computation as in the proof of Theorem \ref{th:4.2}, we get
\[ \left\vert \beta^k - \lambda \right\vert \geqslant 4 \left\vert \xi k - r - p \right\vert .  \]
Hence,
\begin{eqnarray*}
\frac{1}{\left\vert \beta^k - \lambda \right\vert} & \leqslant & \frac{1}{4k} \times \frac{1}{\left\vert \xi - \frac{p_0 + q_0 p }{q_0k}\right\vert} \\
& \leqslant & \frac{1}{4k} \times \frac{1}{g(q_0k)}.
\end{eqnarray*}
Thus,
\[ \left( \frac{1}{\left\vert \beta^k - \lambda \right\vert} \right)^{\frac{1}{k}}  \leqslant \frac{1}{4^{\frac{1}{k}}k^{\frac{1}{k}}} \times  \frac{1}{g(q_0k)^{\frac{1}{k}}},  \]
and so 
\[ \limsup_{k \to \infty}  \left( \frac{1}{\left\vert \beta^k - \lambda \right\vert} \right)^{\frac{1}{k}} \leqslant 1 \times \frac{1}{\liminf_{k \to \infty} g(q_0k)^{\frac{1}{k}}} \leqslant 1.\]
The conclusion of the proof is then identical to the proof of Theorem \ref{th:4.2}.
\end{proof}	
\section{The spectrum of weighted composition operators induced by  aperiodic rotations in Banach spaces}\label{sec:5}
We interrupt our study of  weighted composition operators induced by rotations on $\Hol(\D)$ and consider Banach spaces of holomorphic functions. One reason is that we will use the results on the disc algebra 
\[  A(\D)  :={\mathcal C}(\overline{\D})\cap \Hol(\D)   \] 
when we consider $\Hol(\D)$ again in Section~\ref{sec:7}. The other reason is that the technique we use 
(i.e. spectral decomposition) is simpler for Banach spaces. 

Let $X$ be a Banach space  such that 
   \[X\hookrightarrow \Hol(\D),\]
   i.e. $X$ is a  subspace of $\Hol(\D)$ and the injection is continuous, where $\Hol(\D)$ carries the topology of uniform convergence on compact subsets of $\D$. We let $e_n(z)=z^n$ for $n\in\N_0$ and $z\in\D$. We assume that 
 \begin{equation}\label{eq:5.1}
    e_0\in X;
 \end{equation}   
 \begin{equation}\label{eq:5.2}
    \mbox{if } f,f_1\in\Hol(\D)\mbox{ are such that } f=e_1f_1,\mbox{ then }f\in X\Longleftrightarrow f_1\in X;  
 \end{equation}   
   \begin{equation}\label{eq:5.3}
  \mbox{if } f\in X, \mbox{ then }z\mapsto f(\beta z)\in X\mbox{ for all }\beta\in\T. 
  \end{equation}     
  It follows from (\ref{eq:5.1}) and (\ref{eq:5.2}) that $\C[z]\subset X$. Moreover, for each $\beta\in\T$, $(U_\beta f)(z)=f(\beta z),z\in\D$ defines an isomorphism $U_\beta$ on $X$. 
  
 Let $m\in\Hol(\D)$ such that 
  \begin{equation}\label{eq:5.4}
 mf\in X\mbox{ for all }f\in X.
 \end{equation}   
 Then, given $\beta\in\T$, we consider the operator $T:\Hol(\D)\to\Hol(\D)$ given by 
 \[  (Tf)(z)=m(z)f(\beta z).\]
It leaves $X$ invariant and we denote by $T_X$ the restriction of $T$ to $X$. Then $T_X\in{\mathcal L}(X)$ by the closed graph theorem. We are mainly interested in the \emph{aperiodic case}; i.e. when $\beta$ is not a root  of the unity. Then the spectrum of $T_X$ is rotationally invariant as we prove in Proposition~\ref{prop:5.2}. 

We start by a technical lemma which will be used later. Recall from Remark~\ref{rem:4.3} that it may happen that $\lambda\in\rho(T)$ but $\lambda \beta\in \sigma_p(T)$. 
\begin{lem}\label{lem:5.1}
Let $\mu\in\C\setminus \{ \overline{\beta} m(0)\}$, $\mu\neq 0$. 
\begin{itemize}
	\item[a)] If $\mu\Id -T$ is surjective, then $\beta\mu\Id -T$ is also surjective. 
	\item[b)] If $\mu\in\rho(T)$ and $\beta\mu\not\in\sigma_p(T)$, then $R_{\beta\mu}$ can be obtained from $R_\mu$ by the following:\\
	for $g\in\Hol(\D)$ of the form $g(z)=zg_1(z)$ with $g_1\in\Hol(\D)$ one has 
	\begin{equation}\label{eq:5.5}
	R_{\beta \mu}g=e_1 R_\mu (\overline{\beta} g_1);	  
	\end{equation}	  
	\begin{equation}\label{eq:5.6}
	R_{\beta \mu}e_0=  \frac{1}{\beta \mu -m(0)}e_0  + R_{\beta \mu}\left(  \frac{m-m(0)}{\beta \mu - m(0)}\right) ;  
	\end{equation}
	 \item[c)] If $\mu\in \rho(T_X)$ and $\beta \mu\not\in \sigma_p(T_X)$ then $\beta\mu\in \rho(T_X)$ and $R_{\beta \mu}$ is given by (\ref{eq:5.5}) and (\ref{eq:5.6}). 
\end{itemize}	
\end{lem}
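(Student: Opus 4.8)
The plan is to reduce the whole statement to two elementary identities coming straight from the definition of $T$: first, $T(e_1f)=\beta\,e_1(Tf)$ for every $f\in\Hol(\D)$ (this is just Lemma~\ref{lem:2.3} rewritten, with $\Phi f=e_1f$), and second, $Te_0=m$ (since $(Te_0)(z)=m(z)e_0(\beta z)=m(z)$). With these at hand one builds preimages under $\beta\mu\Id-T$ out of preimages under $\mu\Id-T$ by multiplying by $e_1$, after isolating the constant term of the right-hand side.

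For part a) I would write an arbitrary $g\in\Hol(\D)$ as $g=g(0)e_0+e_1g_1$ with $g_1\in\Hol(\D)$. For the summand $e_1g_1$: choosing $h\in\Hol(\D)$ with $(\mu\Id-T)h=\overline{\beta}g_1$ (possible since $\mu\Id-T$ is assumed surjective) one computes, using the first identity and $|\beta|=1$,
\[
(\beta\mu\Id-T)(e_1h)=\beta\mu\,e_1h-\beta\,e_1(Th)=\beta\,e_1\big((\mu\Id-T)h\big)=\beta\,e_1(\overline{\beta}g_1)=e_1g_1 .
\]
For the summand $g(0)e_0$: I use the ansatz $f_2=\frac{1}{\beta\mu-m(0)}e_0+v$, which is legitimate because $\beta\mu\neq m(0)$ by hypothesis; from $Te_0=m$ one gets $(\beta\mu\Id-T)\big(\frac{1}{\beta\mu-m(0)}e_0\big)=e_0-\frac{m-m(0)e_0}{\beta\mu-m(0)}$, so it remains to solve $(\beta\mu\Id-T)v=\frac{m-m(0)e_0}{\beta\mu-m(0)}$; the right-hand side vanishes at $0$, hence is of the form $e_1\tilde g$ with $\tilde g\in\Hol(\D)$, and $v$ is produced exactly as in the first case. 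Then $f=g(0)f_2+e_1h$ solves $(\beta\mu\Id-T)f=g$, which proves surjectivity of $\beta\mu\Id-T$.

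For part b), part a) gives surjectivity of $\beta\mu\Id-T$, and the hypothesis $\beta\mu\notin\sigma_p(T)$ gives injectivity, so $\beta\mu\in\rho(T)$ and $R_{\beta\mu}$ is defined; the solutions built in part a) are then the \emph{unique} preimages, which is exactly (\ref{eq:5.5}) (take $h=R_\mu(\overline{\beta}g_1)$) and, for $g=e_0$, (\ref{eq:5.6}) — note $\frac{m-m(0)}{\beta\mu-m(0)}$ vanishes at $0$, so $R_{\beta\mu}$ of it is itself given by (\ref{eq:5.5}). For part c) I would rerun the argument verbatim inside $X$, with $R_\mu$ denoting the resolvent of $T_X$; the only addition is to track membership in $X$, which (\ref{eq:5.1})--(\ref{eq:5.4}) supply: if $g\in X$ then $g_1\in X$ by (\ref{eq:5.1}) and (\ref{eq:5.2}); then $\overline{\beta}g_1\in X$, so $h=R_\mu(\overline{\beta}g_1)\in X$ and $e_1h\in X$ by (\ref{eq:5.2}); and $m=me_0\in X$ by (\ref{eq:5.4}), so $\frac{m-m(0)e_0}{\beta\mu-m(0)}\in X$ and vanishes at $0$, whence $v\in X$ as before. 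Thus $\beta\mu\Id-T$ maps $X$ onto $X$, and together with injectivity on $X$ (the hypothesis $\beta\mu\notin\sigma_p(T_X)$) this gives $\beta\mu\in\rho(T_X)$, the formulas following again from uniqueness.

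There is no serious obstacle here: the points demanding care are the treatment of the constant component, where the assumption $\mu\neq\overline{\beta}m(0)$ enters, and, in part c), the routine but necessary verification that every function occurring in the construction remains in $X$.
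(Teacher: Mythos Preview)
Your proof is correct and follows essentially the same approach as the paper: split $g$ into its constant term and a function vanishing at $0$, use the conjugation identity $T(e_1f)=\beta\,e_1(Tf)$ (which the paper writes out as a direct computation) to lift preimages through multiplication by $e_1$, and handle $e_0$ via the ansatz $\frac{1}{\beta\mu-m(0)}e_0+v$. Your treatment of parts b) and c), in particular the explicit tracking of $X$-membership via (\ref{eq:5.1})--(\ref{eq:5.4}), is slightly more detailed than the paper's, but the substance is identical.
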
     
\begin{proof}
Let $\mu\in\C$ such that $\mu\neq 0$ and $\beta\mu\neq m(0)$. \\
a) Assume that $(\mu\Id -T)$ is surjective. \\
\emph{First case}: Let $g\in\Hol(\D)$ such that $g(0)=0$. Then there exists $g_1\in\Hol(\D)$ such that $g(z)=zg_1(z)$, $z\in\D$. By hypothesis there exists $f_1\in\Hol(\D)$ such that $(\mu\Id -T)f_1=\overline{\beta} g_1$. Let $f(z)=zf_1(z)$. Then 
\begin{eqnarray*}
((\beta\mu \Id -T)f)(z) & = & \beta\mu zf_1(z)-m(z)\beta z f_1(\beta z)\\
  &  = & \beta z ((\mu\Id -T)f_1)(z)=zg_1(z)=g(z).  	
\end{eqnarray*}	
\emph{Second case}: Let $g=e_0$. By the first case there exists $\tilde{f} \in\Hol(\D)$ such that 
  \[  (\beta\mu \Id -T)\tilde{f} =\frac{m-m(0)}{\beta \mu - m(0)} \mbox{ and }\tilde{f}(0)=0.   \]
Let $f(z)=\frac{1}{\beta \mu -m(0)} + \tilde{f}(z)$. Then $f(0)=  \frac{1}{\beta \mu -m(0)} $ and consequently, 
\begin{eqnarray*}
(\beta\mu\Id -T)f & = & (\beta\mu -m)f(0)+(\beta\mu\Id -T)\tilde{f} \\
  & = & \beta\mu f(0) - mf(0) +(m-m(0))f(0)=1. 	
\end{eqnarray*}
Since $\Hol(\D)=\Hol_0(\D)\oplus \C e_0$ with $\Hol_0(\D):=\{  f\in\Hol(\D):f(0)=0 \}$, a) and b) are proved. \\
c) By our hypothesis on $X$ we have $X=X_0\oplus \C e_0$ where $X_0=\{ f\in X:f(0)=0 \}$. Moreover, since $mX\subset X$, $m=me_0\in X$. Now the above proof also gives the statement of c).   
\end{proof}	
Now we show rotational invariance of the spectrum of $T_X$. We let $\T=\partial \D=\{  \lambda \in\C :|\lambda |=1\}$.
\begin{prop}\label{prop:5.2}
Assume that $|\beta|=1$ and $\beta^n\neq 1$ for all $n\in\N$. Then 
\begin{itemize}
	\item[a)] $m(0)\T\subset \sigma(T_X)$.
	\item[b)] $\lambda\in\sigma(T_X)$  implies $\lambda\T\subset \sigma(T_X)$.  
\end{itemize}	
\end{prop}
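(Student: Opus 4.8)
The plan is to adapt the $\Hol(\D)$-arguments from Section~\ref{sec:2} to the Banach space $X$, using the extra structure $X = X_0 \oplus \C e_0$ with $X_0 = \{f \in X : f(0) = 0\}$. For part a), I would first observe that $m(0)\Id - T_X$ is never surjective by exactly the same reason as in Lemma~\ref{lem:2.4}a): for every $f \in X$ one has $((m(0)\Id - T)f)(0) = m(0)f(0) - m(0)f(0) = 0$, so the range misses every function that does not vanish at $0$; in particular $e_0 \notin (m(0)\Id - T_X)X$. Hence $m(0) \in \sigma(T_X)$. To propagate this to the whole circle $m(0)\T$, I would invoke Lemma~\ref{lem:5.1}c) contrapositively: if $\beta\mu \in \rho(T_X)$ and $\mu \in \rho(T_X)$ then everything is consistent, but the useful direction is that if $\mu = m(0)$ (so $\mu \neq 0$ provided $m(0) \neq 0$; the case $m(0) = 0$ makes $m(0)\T = \{0\}$ and is handled separately via Proposition~\ref{prop:3.1}-type reasoning or directly since $T$ has $0$ in its spectrum anyway through non-surjectivity) and $\beta\mu = \beta m(0) \neq m(0)$, then the chain runs the other way. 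Concretely: I claim $\beta^n m(0) \in \sigma(T_X)$ for all $n \in \N_0$ by induction, using part a) of Lemma~\ref{lem:5.1}: if $\beta^n m(0)\Id - T$ is \emph{not} surjective then, since $\beta^{n}m(0) = \beta\cdot(\beta^{n-1}m(0))$ and $\beta^{n-1}m(0) \neq \overline\beta m(0)$ would fail only if $\beta^n = 1$ (excluded), the contrapositive of Lemma~\ref{lem:5.1}a) forces $\beta^{n-1}m(0)\Id - T$ not surjective; running this from $n=0$ gives all $\beta^n m(0) \in \sigma(T_X)$. Then density of $\{\beta^n : n \in \N_0\}$ in $\T$ (aperiodicity) together with closedness of $\sigma(T_X)$ in the Banach space setting yields $m(0)\T \subset \sigma(T_X)$.

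For part b), suppose $\lambda \in \sigma(T_X)$. I want $\beta\lambda \in \sigma(T_X)$, because then iterating and closing up gives $\lambda\T \subset \sigma(T_X)$ by the same density-plus-closedness argument. If $\lambda = 0$ then $0 \in \sigma(T_X)$ and $\lambda\T = \{0\}$, trivially done; if $\lambda = \overline\beta m(0)$ then $\lambda\T = m(0)\T$, already covered by a). So assume $\lambda \neq 0$ and $\beta\lambda \neq m(0)$, i.e. the hypotheses of Lemma~\ref{lem:5.1}c) with $\mu = \lambda$. Now I argue by contradiction: if $\beta\lambda \in \rho(T_X)$, I would like to conclude $\lambda \in \rho(T_X)$, contradicting $\lambda \in \sigma(T_X)$. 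The point is that Lemma~\ref{lem:5.1}c) is essentially reversible: the substitution $g(z) = z g_1(z) \leftrightarrow f(z) = z f_1(z)$ together with the splitting off of the $e_0$-component sets up a bijective correspondence between solving $(\lambda\Id - T)f_1 = \overline\beta g_1$ and solving $(\beta\lambda\Id - T)f = g$. Explicitly, from $R_{\beta\lambda}$ one recovers $R_\lambda$ on $X_0$ via $R_\lambda h = \overline\beta\, e_1^{-1} R_{\beta\lambda}(e_1 \beta h)$ — i.e. $R_\lambda h = \overline\beta\,\Phi^{-1}R_{\beta\lambda}\Phi(\beta h)$ in the notation of Lemma~\ref{lem:2.3} — which lands in $X_0$ by property \eqref{eq:5.2}, and on $e_0$ by a formula inverse to \eqref{eq:5.6}; surjectivity of $\lambda\Id - T_X$ then follows, and injectivity is automatic since $\lambda \neq m(0)$ (from $\beta\lambda \neq m(0)$... wait, one needs $\lambda \notin \sigma_p$, which here we may assume, else the claim is immediate).

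Actually the cleanest route for b) is to first dispose of the point-spectrum case: if $\lambda \in \sigma_p(T_X)$ then $\beta^n\lambda \in \sigma_p(T_X) \subset \sigma(T_X)$ for all $n$ by the argument of Proposition~\ref{prop:3.1}b) (the functions $e_k f$ stay in $X$ by \eqref{eq:5.2}), and we close up. Otherwise $\lambda\Id - T_X$ is injective but not surjective, and I apply Lemma~\ref{lem:5.1}a): since $\lambda \neq 0$ and $\beta\lambda \neq m(0)$, non-surjectivity of $\lambda\Id - T$ on $\Hol(\D)$ would follow from non-surjectivity of $\beta\lambda\Id - T$ — but I need this at the level of $X_0 \oplus \C e_0$, which is exactly what the proof of Lemma~\ref{lem:5.1}c) provides once read backwards. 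I expect the main obstacle to be this bookkeeping: verifying that the $X$-membership is preserved in both directions under the maps $f \mapsto e_1 f$, $f \mapsto e_1^{-1}f$ (clean, by \eqref{eq:5.2}) and under the $e_0$-correction term $\frac{m - m(0)}{\beta\lambda - m(0)}$ (clean, since $m = m e_0 \in X$ by \eqref{eq:5.4}), and then assembling these into a genuine two-sided implication "$\lambda \in \rho(T_X) \Leftrightarrow \beta\lambda \in \rho(T_X)$" valid whenever $\lambda \neq 0$, $\beta\lambda \neq m(0)$, $\lambda \notin \sigma_p(T_X)$, $\beta\lambda \notin \sigma_p(T_X)$. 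Once that equivalence is in hand, both a) and b) reduce to the density of $\{\beta^n\}_{n \geq 0}$ in $\T$ and the closedness of the Banach-space spectrum.
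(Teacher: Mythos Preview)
Your overall architecture for b) --- splitting into the point-spectrum case (handled via the argument of Proposition~\ref{prop:3.1}b), which does carry over to $X$ thanks to \eqref{eq:5.2}) and the injective-but-not-surjective case, then closing up by density of the $\beta$-orbit and closedness of $\sigma(T_X)$ --- matches the paper exactly. But in both parts you are fighting the direction of Lemma~\ref{lem:5.1}, and in a) this actually breaks the argument.

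For a), your induction step reads: if $\beta^n m(0)\Id - T$ is not surjective then $\beta^{n-1}m(0)\Id - T$ is not surjective. That is indeed the contrapositive of Lemma~\ref{lem:5.1}a), but it runs \emph{downward} in $n$; starting from $n=0$ it produces nothing. Trying instead to propagate toward $\overline\beta^k m(0)$ hits the excluded value $\mu = \overline\beta m(0)$ at the very first step, so Lemma~\ref{lem:5.1} alone cannot get you off the ground here. The paper sidesteps this by invoking Lemma~\ref{lem:2.5}, which shows directly that $e_k \notin (\beta^k m(0)\Id - T)\Hol(\D)$ for every $k\in\N_0$; since $e_k \in X$ and $X \subset \Hol(\D)$, non-surjectivity of $\beta^k m(0)\Id - T_X$ on $X$ follows at once, and closedness of $\sigma(T_X)$ finishes.

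For b) in the non-surjective case, you aim for $\beta\lambda \in \sigma(T_X)$ and therefore have to ``read Lemma~\ref{lem:5.1} backwards'', rebuilding $R_\lambda$ from $R_{\beta\lambda}$. This can be made to work with the bookkeeping you outline, but the paper avoids it entirely: after reducing to $|\lambda|\neq |m(0)|$ (the circle $|m(0)|\T$ being covered by a)), it applies the contrapositive of Lemma~\ref{lem:5.1}a) with $\mu = \overline\beta\lambda$ to deduce that $\overline\beta\lambda\Id - T_X$ is not surjective, and iterates to get $\overline\beta^k\lambda \in \sigma(T_X)$ for all $k$. Since $\{\overline\beta^k : k\in\N_0\}$ is just as dense in $\T$ as $\{\beta^k\}$, this closes up to $\lambda\T \subset \sigma(T_X)$ without ever needing the two-sided equivalence.
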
  
\begin{proof}
a) It follows from Lemma~\ref{lem:2.5} that $\beta^k m(0)\Id -T_X$ is not surjective. Since $\sigma(T_X)$ is closed, it follows that $\T m(0)\subset \sigma(T_X)$. \\
b) Let $\lambda\in\sigma(T_X)$, $|\lambda|\neq |m(0)|$. \\
\emph{First case}: $\lambda\in\sigma_p(T)$. Then $\beta^k\lambda \in\sigma_p(T_X)$ for all $k\in\N_0$ by the proof of Proposition~\ref{prop:3.1}. Since $\sigma(T_X)$ is closed, this implies that $\lambda \T\subset \sigma(T_X)$. \\
\emph{Second case}: $(\lambda\Id -T_X)$ is not surjective. Applying Lemma~\ref{lem:5.1} to $\mu=\overline{\beta}\lambda$, we deduce that $(\overline{\beta} \lambda -T_X)$ is not surjective either. Thus, by iteration, $\overline{\beta}^k \lambda \in\sigma(T_X)$ for all $k\in\N$. This implies that $\lambda\T\subset\sigma(T_X)$.    
\end{proof}	
Next we show that the spectrum of $T_X$ is a disc whenever $m$ has a zero in $\D$. By $r_\sigma (T_X)$ we denote the spectral radius of $T_X$. 
\begin{thm}\label{th:5.3}
	Assume that $|\beta|=1$, $\beta^n\neq 1$ for all $n\in\N$ and that there exists $z_0\in\D$ such that $m(z_0)=0$. Then \[\sigma(T_X)=\{  \lambda\in\C  : |\lambda|\leq r_\sigma(T_X) \} .\] 
\end{thm}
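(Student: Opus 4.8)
The plan is to leverage what is already known about $\sigma(T_X)$ from Proposition~\ref{prop:5.2}. Since $\sigma(T_X)$ is nonempty and compact, it is contained in the closed disc $\overline{D(0,\rho)}$ with $\rho:=r_\sigma(T_X)$, and since the spectral radius is attained, there is $\lambda_0\in\sigma(T_X)$ with $|\lambda_0|=\rho$; by Proposition~\ref{prop:5.2}b) the whole circle $\{|\lambda|=\rho\}$ then lies in $\sigma(T_X)$. So it suffices to show that for every $\rho'\in(0,\rho)$ the circle $\{|\lambda|=\rho'\}$ is contained in $\sigma(T_X)$: closedness of $\sigma(T_X)$ then forces $\overline{D(0,\rho)}\subset\sigma(T_X)$, and the reverse inclusion is definitional. (If $\rho=0$ there is nothing to prove, and note $0\in\sigma(T_X)$ will come out for free.)

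Fix $\rho'\in(0,\rho)$ and argue by contradiction, assuming $\{|\lambda|=\rho'\}\subset\rho(T_X)$. As this circle is a compact subset of the open set $\rho(T_X)$, it has a neighbourhood inside $\rho(T_X)$, so there is $\eps\in(0,\rho')$ with the annulus $\{\rho'-\eps<|\lambda|<\rho'+\eps\}\subset\rho(T_X)$. This annulus separates $\sigma(T_X)$ into two disjoint compact pieces $\sigma_1:=\sigma(T_X)\cap D(0,\rho')$ and $\sigma_2:=\sigma(T_X)\cap\{|\lambda|>\rho'\}$, and $\sigma_2\neq\emptyset$ because it contains the circle $\{|\lambda|=\rho\}$ (as $\rho>\rho'$). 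Applying the Riesz functional calculus with the projection $P=\frac{1}{2\pi i}\int_{|\lambda|=\rho'}R_\lambda\,d\lambda$ yields a decomposition $X=Y_1\oplus Y_2$ into closed $T_X$-invariant subspaces with $\sigma(T_X|_{Y_j})=\sigma_j$. Since $\sigma_2\neq\emptyset$ we get $Y_2\neq\{0\}$, and since $0\notin\sigma_2$ the operator $T_X|_{Y_2}\colon Y_2\to Y_2$ is bijective; in particular it maps $Y_2$ onto $Y_2$.

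The crux is to derive a contradiction from the existence of $z_0\in\D$ with $m(z_0)=0$, using only surjectivity of $T_X|_{Y_2}$ and that every element of $Y_2\subset X\subset\Hol(\D)$ is holomorphic on $\D$. Suppose first $z_0\neq 0$. For $f\in Y_2$, write $f=T_Xg$ with $g\in Y_2$; then $f(z_0)=m(z_0)g(\beta z_0)=0$. An induction on $k$ then gives $f(\overline{\beta}^{\,k}z_0)=0$ for all $k\in\N_0$ and all $f\in Y_2$: writing again $f=T_Xg$ with $g\in Y_2$, one has $f(\overline{\beta}^{\,k+1}z_0)=m(\overline{\beta}^{\,k+1}z_0)\,g(\overline{\beta}^{\,k}z_0)=0$ by the inductive hypothesis applied to $g$. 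Since $0<|z_0|<1$ and $\beta$ is aperiodic, $\{\overline{\beta}^{\,k}z_0:k\in\N_0\}$ is an infinite subset of the compact circle $\{|z|=|z_0|\}\subset\D$ and hence has an accumulation point in $\D$; by the identity theorem every such $f$ vanishes identically, so $Y_2=\{0\}$, a contradiction. If instead $m(0)=0$ (which covers the remaining case $z_0=0$), one shows by induction that every $f\in Y_2$ vanishes to order $\geq n$ at $0$ for all $n$: if $g\in Y_2$ vanishes to order $\geq n$ at $0$ and $f=T_Xg$, then $f(z)=m(z)g(\beta z)$ vanishes to order $\geq n+1$ at $0$ because $m$ vanishes there. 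Again every $f\in Y_2$ is identically zero, so $Y_2=\{0\}$, a contradiction.

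This contradiction shows $\{|\lambda|=\rho'\}\subset\sigma(T_X)$ for every $\rho'\in(0,\rho)$, which completes the proof. I expect the main obstacle to be precisely this last step — establishing that a weighted composition operator whose weight has a zero in $\D$ cannot be invertible on any nonzero closed $T_X$-invariant subspace of $X$ — while the reduction to it via the Riesz decomposition is routine spectral theory (and it is exactly the "spectral decomposition" argument advertised in the introduction, which will be reused in $\Hol(\D)$ in Section~\ref{sec:7}).
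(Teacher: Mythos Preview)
Your proof is correct and follows essentially the same route as the paper: use Proposition~\ref{prop:5.2} to reduce to showing that no circle of radius strictly below $r_\sigma(T_X)$ can lie in $\rho(T_X)$, apply the Riesz spectral decomposition along such a circle, and then exploit the zero of $m$ to force the ``outer'' invariant subspace to be trivial. The only cosmetic difference is that the paper writes $f=T_2^n g_n$ and reads off $f(\overline{\beta}^{\,n-1}z_0)=0$ directly from the product formula $f(z)=m(z)m(\beta z)\cdots m(\beta^{n-1}z)\,g_n(\beta^n z)$, whereas you obtain the same vanishing by an induction over all of $Y_2$; both arguments are equivalent.
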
  
\begin{proof}
Since $(T_X f)(z_0)=0$ for all $f\in X$, $T$ is not invertible and so $0\in\sigma(T_X)$. Assume that there exists $\lambda_0\in\rho(T_X)$ such that \[0<r_0:=|\lambda_0|<r_\sigma (T_X).\]
Then, by Proposition~\ref{prop:5.2}, $r_0\T\subset \rho(T_X)$. Thus, by the spectral decomposition theorem \cite[VII Theorem 19]{DS88}, there exist closed subspaces $X_j$ of $X$ such that $TX_j\subset X_j$, $j=1,2$, $X=X_1\oplus X_2$ and for $T_j:=T_{|X_j}$, 
\[\sigma(T_1)=\{   \lambda\in\sigma(T_X):|\lambda|<r_0  \} \mbox{ and }   \sigma(T_2)=\{   \lambda\in\sigma(T_X):|\lambda|>r_0  \}.\]
Since $\sigma(T_X)\cap \{   \lambda\in\C :|\lambda|=r_\sigma (T_X)\} \neq \emptyset$, one has $X_2\neq \{0\}$. The operator $T_2$ is invertible. Let $0\neq f\in X_2$. Then for each $n\in\N$ there exists $g_n\in X_2$ such that $f=T_2^n g_n$. Thus 
\begin{equation}\label{eq:5.7}
   f(z)=m(z)m(\beta z)\cdots m(\beta^{n-1}z) g_n(\beta^n z).
 \end{equation}  	
 \emph{First case}: $z_0\neq 0$. It follows from (\ref{eq:5.7}) that $f(\overline{\beta}^{n-1}z_0)=0$. Since $n$ is arbitrary, $f$ vanishes on $z_0\T$ and hence $f=0$, a contradiction. \\
 \emph{Second case}: $z_0= 0$. Then there exists $m_1\in\Hol(\D)$ such that $m(z)=zm_1(z)$. It follows from (\ref{eq:5.7}) that $0$ is a zero of $f$ of order $n$. Since $n$ is arbitrary $f=0$, a contradiction. 
 
 We have shown that $D(0,r_\sigma(T_X))\subset \sigma(T_X)$. Since $\sigma(T_X)$ is closed, the claim follows.    
\end{proof}	
We consider three examples:  
the \emph{disc algebra} $A(\D):={\mathcal C}(\overline{\D})\cap \Hol(\D)$, the \emph{Wiener algebra} $W^+(\D):=\{ f\in\Hol(\D) :\sum_{n=0}^\infty|\widehat{f}(n)|<\infty\}$, where for $f(z)=\sum_{n=0}^\infty a_n z^n$, $f\in\Hol(\D)$, we let $\widehat{f}(n):=a_n$, and the \emph{Hardy spaces} $H^p(\D)$, $1\leq p\leq \infty$.   Recall that $H^\infty(\D)$ is the algebra of all bounded holomorphic functions on $\D$ equipped with the sup norm whereas for $1\leq p<\infty$, 
\[ H^p(\D):=\{  f\in\Hol(\D): \|f\|^p_p:=\sup_{0<r<1}  \frac{1}{2\pi}\int_0^{2\pi}|f(re^{it})|^p dt<\infty\}.\]

They all satisfy (\ref{eq:5.1})-(\ref{eq:5.3}). It follows that $U_\beta$ is an isometry on $X$. 

Let $m\in\Hol(\D)$. Then $mX\subset X $ if and only if \\ 

\hspace{2cm} $m\in A(\D)$ for $X=A(\D)$,  \\

 \hspace{2cm}  $m\in W^+(\D)$ for $X=W^+(\D)$ and \\
 
\hspace{2cm}  $m\in H^\infty (\D)$ for $X=H^p(\D)$ if $1\leq p\leq \infty$. \\

In the first two cases we have $X\subset {\mathcal C}(\overline{\D})$ and the proof of Theorem~\ref{th:5.3} remains true whenever $m$ has a $0$ in $\overline{\D}$ (and not merely in $\D$). We state this as a corollary (from the proof). 
\begin{cor}\label{cor:5.4}
Let $X=A(\D)$ and $m\in A(\D)$, or $X=W^+(\D)$ and $m\in W^+(\D)$. Let $\beta\in\C$ such that $|\beta|=1$, $\beta^n\neq 1$ for all $n\in\N$. If there exists $z_0\in\overline{\D}$ such that $m(z_0)=0$, then 
\[  \sigma(T_X)=\{  \lambda\in\C :|\lambda|\leq r_\sigma(T_X)  \}.   \] 
\end{cor}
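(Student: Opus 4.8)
The plan is to run the proof of Theorem~\ref{th:5.3} essentially verbatim; the only new ingredient is that functions in $A(\D)$ — and in $W^+(\D)$, since $W^+(\D)\subset{\mathcal C}(\overline{\D})$ so $W^+(\D)\subset A(\D)$ — extend continuously to $\overline{\D}$, which lets us evaluate at a zero $z_0\in\T$ of $m$.

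First I would record that $0\in\sigma(T_X)$. Indeed $(T_Xf)(z_0)=m(z_0)f(\beta z_0)=0$ for every $f\in X$ (using the continuous extension when $|z_0|=1$), so the range of $T_X$ is contained in $\{g\in X:g(z_0)=0\}$, a proper subspace since $e_0$ does not belong to it; hence $T_X$ is not surjective. Next, since $\sigma(T_X)\subset\{|\lambda|\leq r_\sigma(T_X)\}$ and $\sigma(T_X)$ is closed, it suffices to show $D(0,r_\sigma(T_X))\subset\sigma(T_X)$. Suppose not: there is $\lambda_0\in\rho(T_X)$ with $|\lambda_0|<r_\sigma(T_X)$, and by the previous paragraph $\lambda_0\neq 0$, so $0<r_0:=|\lambda_0|<r_\sigma(T_X)$. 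By Proposition~\ref{prop:5.2} the whole circle $r_0\T$ lies in $\rho(T_X)$, so the spectral decomposition theorem \cite[VII Theorem 19]{DS88} gives a splitting $X=X_1\oplus X_2$ into closed $T_X$-invariant subspaces with $\sigma(T_{|X_1})=\{\lambda\in\sigma(T_X):|\lambda|<r_0\}$ and $\sigma(T_{|X_2})=\{\lambda\in\sigma(T_X):|\lambda|>r_0\}$. Since the spectral radius is attained on $\sigma(T_X)$, one has $X_2\neq\{0\}$ and $T_2:=T_{|X_2}$ is invertible.

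Then I would fix $0\neq f\in X_2$ and write $f=T_2^ng_n$ with $g_n\in X_2$, which yields
\[  f(z)=m(z)m(\beta z)\cdots m(\beta^{n-1}z)\,g_n(\beta^n z),\qquad z\in\D,\ n\in\N. \]
If $z_0\neq 0$, evaluating at $z=\overline{\beta}^{\,n-1}z_0\in\overline{\D}$ (legitimate because $f\in A(\D)$) makes the factor $m(\beta^{n-1}z)=m(z_0)$ vanish, so $f(\overline{\beta}^{\,n-1}z_0)=0$ for all $n\in\N$; thus $f$ vanishes on $\{\overline{\beta}^{\,k}z_0:k\in\N_0\}$, whose closure is the circle $|z_0|\,\T$. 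If $|z_0|<1$ this set has an accumulation point in $\D$, so $f\equiv 0$ by the identity theorem; if $|z_0|=1$, then $f$ vanishes on a dense subset of $\T$, hence on all of $\T$ by continuity on $\overline{\D}$, hence $f\equiv 0$ by the maximum modulus principle. If $z_0=0$, then $m=e_1m_1$ for some $m_1\in\Hol(\D)$, and the displayed identity shows $f$ has a zero of order at least $n$ at $0$ for every $n$, so again $f\equiv 0$. In each case we contradict $f\neq 0$; hence $D(0,r_\sigma(T_X))\subset\sigma(T_X)$ and the corollary follows.

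The only step that is not already contained in the proof of Theorem~\ref{th:5.3} is handling a zero $z_0$ lying on $\T$, and this is exactly where the inclusion $X\subset{\mathcal C}(\overline{\D})$ enters; it requires a small amount of extra care but is not a genuine obstacle.
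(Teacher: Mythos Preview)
Your proof is correct and follows exactly the approach the paper intends: the paper states Corollary~\ref{cor:5.4} as ``a corollary (from the proof)'' of Theorem~\ref{th:5.3}, noting only that $X\subset{\mathcal C}(\overline{\D})$ in these two cases so the same argument goes through for a zero on the boundary. You have spelled out precisely the extra detail needed when $|z_0|=1$ (vanishing on a dense subset of $\T$, hence on $\T$ by continuity, hence identically by the maximum principle), which is exactly what the paper leaves implicit.
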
 
For $X=A(\D)$, Corollary~\ref{cor:5.4} is given by Kamowitz \cite[Theorem 4.8 and Theorem 4.9]{Ka78} whose proof uses a change of limits (7 lines after (4.2)) and arguments concerning negligible sets which seem difficult to be justified. Our proof is very different. Note that $T^n$ is given by 
\[ (T^n f)(z)=m_n(z)f(\beta^n z),\,\,\,\, (f\in\Hol(\D))    \]
where $m_n(z)=m(z)m(\beta z)\cdots m(\beta^{n-1}z)$. This allows one to calculate, or at least estimate, the spectral radius $r_\sigma (T_X)$ for $X=A(\D),W^+(\D)$ or $H^p(\D)$. In fact 
\[r_\sigma (T_X)=\lim_{n\to\infty}\|T_X^n\|^{1/n}   =   \lim_{n\to\infty} \sup_{z\in \D}|m_n(z)|^{1/n}.\]
 In the case of $X=A(\D)$ or $X=W^+(\D)\subset A(\D)$, this expression can be computed. The following lemma is due to Kamowitz \cite[Lemma 4.2 and 4.4]{Ka78}.
\begin{lem}
\label{lem:5.5}
 Let $m\in A(\D)$, $\beta\in\C$, $|\beta|=1$, $\beta^n\neq 1$ for all $n\in\N$.
 \begin{itemize}
 	\item[a)] Then 
 	\[  r_\sigma(T_{A(\D)})=\lim_{n\to\infty} \sup_{|z|\leq 1}|m_n(z)|^{1/n} =\exp\left(  \frac{1}{2\pi} \int_0^{2\pi}\log|m(e^{it})|dt\right) =:M^*.  \] 
 	\item[b)] If $m(z)\neq 0$ for all $|z|=1$, then
 	\[  \lim_{n\to\infty}|m_n(z)|^{1/n}=M^*\mbox{ uniformly in }z\in\T.\]
 \end{itemize} 
\end{lem}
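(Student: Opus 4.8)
The plan is to prove parts a) and b) of Lemma~\ref{lem:5.5} together, starting from the already-established formula
\[ r_\sigma(T_{A(\D)}) = \lim_{n\to\infty}\sup_{|z|\leq 1}|m_n(z)|^{1/n}, \]
where $m_n(z)=m(z)m(\beta z)\cdots m(\beta^{n-1}z)$. Since $m\in A(\D)$, by the maximum principle $\sup_{|z|\leq 1}|m_n(z)| = \sup_{|z|=1}|m_n(z)|$, so everything reduces to the behaviour of $|m_n|$ on the circle $\T$. Writing $\psi(t)=\log|m(e^{it})|$, we have $\log|m_n(e^{it})| = \sum_{k=0}^{n-1}\psi(t+k\alpha)$ where $\beta = e^{i\alpha}$ and $\alpha/2\pi$ is irrational since $\beta$ is aperiodic. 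By Weyl's equidistribution theorem the orbit $\{t, t+\alpha,\dots,t+(n-1)\alpha\}$ equidistributes, so one expects $\frac{1}{n}\sum_{k=0}^{n-1}\psi(t+k\alpha)\to \frac{1}{2\pi}\int_0^{2\pi}\psi(s)\,ds = \log M^*$.

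For part b), where $m$ has no zero on $\T$, the function $\psi$ is continuous on the compact set $\T$, and for continuous functions the unique ergodicity of the irrational rotation gives \emph{uniform} convergence of the Birkhoff averages: $\frac{1}{n}\sum_{k=0}^{n-1}\psi(t+k\alpha)\to \log M^*$ uniformly in $t$. Exponentiating yields $|m_n(e^{it})|^{1/n}\to M^*$ uniformly in $t\in\T$, which is exactly b), and taking the supremum over $t$ then also gives a). One could alternatively avoid invoking unique ergodicity as a black box: pick a continuous periodic $h$ with $\int_0^{2\pi}h = \int_0^{2\pi}\psi$ and small $\sup\bigl|\psi - h - c\bigr|$ ... actually the cleanest elementary route is to approximate $\psi$ uniformly by trigonometric polynomials $P$ (Fej\'er/Weierstrass), for which $\frac{1}{n}\sum_{k=0}^{n-1}P(t+k\alpha)$ is computed explicitly using the finite geometric sums $\sum_{k=0}^{n-1}e^{ij(t+k\alpha)} = e^{ijt}\frac{1-e^{ijn\alpha}}{1-e^{ij\alpha}}$, which is $O(1/|1-e^{ij\alpha}|)$ for each fixed $j\neq 0$ and hence $\frac1n(\cdots)\to 0$ uniformly in $t$; then an $\varepsilon/3$ argument transfers the uniform convergence to $\psi$.

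The part requiring care (part a) in general, when $m$ may vanish on $\T$) is that $\psi = \log|m(e^{it})|$ is then only in $L^1(\T)$ — by the assumption $m\in A(\D)$, $m\not\equiv 0$, the zeros of $m$ on $\T$ are isolated and $\log|m|$ is integrable (indeed $M^* = \frac{1}{2\pi}\int\log|m(e^{it})|dt > -\infty$ by the fact that $\log|m|$ equals its boundary trace of an $H^1$-type integrability, since $\log|m(z)|$ is subharmonic and $m\in A(\D)$ forces $m$ to be, up to a non-vanishing factor in $A(\D)$, a finite Blaschke-type product times ...). The subtle point is that one cannot directly approximate an unbounded $\psi$ uniformly by continuous functions. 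This is exactly the obstacle Kamowitz faced. \emph{The hard part will be} handling the integrand near the zeros of $m$ on $\T$: one must show $\limsup_n \sup_t \frac1n\sum_{k=0}^{n-1}\psi(t+k\alpha) \le \log M^*$ (the $\liminf\ge$ direction being easy by Jensen, since $\sup_t$ makes it enough to fix one lucky $t$ where equidistribution holds). For the $\limsup$ direction I would split $\psi = \psi_+ - \psi_-$ and note $\psi_+$ is bounded (as $m\in A(\D)$ is bounded), so only $\psi_- = \max(-\psi,0) = \log^+\frac{1}{|m|}$ needs control; for this I would use that near each boundary zero $e^{it_0}$ of order $\ell$, $\psi_-(t)\le \ell\log\frac{C}{|t-t_0|}$, whose orbit averages $\frac1n\sum_k \psi_-(t+k\alpha)$ are controlled uniformly because the irrational rotation keeps the orbit points from clustering too fast near $t_0$ — quantitatively one can truncate: $\frac1n\sum_k \min(\psi_-(t+k\alpha),N)\to\frac{1}{2\pi}\int\min(\psi_-,N)$ uniformly (min$(\psi_-,N)$ is continuous), and the tail $\frac1n\#\{k<n: \psi_-(t+k\alpha)>N\}\cdot(\text{something})$ is made small using $\int\psi_-<\infty$ together with the fact that on any arc $I$ the orbit spends at most $\approx n|I| + o(n)$ (uniformly, again via the trigonometric-polynomial estimate on indicator functions) of its time in $I$. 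Assembling these estimates gives $\limsup_n\sup_t\frac1n\sum\psi(t+k\alpha)\le \log M^*$, closing the proof of a).

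\medskip\noindent Thus the structure is: (1) reduce to boundary behaviour via the maximum principle and the spectral-radius formula already proved; (2) translate into Birkhoff averages of $\psi=\log|m(e^{it})|$ over an irrational rotation; (3) prove uniform convergence when $\psi$ is continuous (case b) via trigonometric approximation and finite geometric sums; (4) in the general case (a), prove $\liminf\ge\log M^*$ by Jensen at a single equidistributed point, and $\limsup\le\log M^*$ by truncation of the negative part plus uniform control of orbit occupation times of small arcs. The main obstacle is step (4).
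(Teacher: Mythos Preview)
The paper does not give its own proof of this lemma; it simply attributes the result to Kamowitz \cite[Lemma 4.2 and 4.4]{Ka78}. So there is no in-paper argument to compare against, and your proposal would supply a proof where the paper offers none.

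Your strategy is sound and essentially complete. The reduction to Birkhoff averages of $\psi(t)=\log|m(e^{it})|$ under the irrational rotation is the right framing, and for part b) the unique-ergodicity argument (or its elementary trigonometric-polynomial version) is correct. For part a), the lower bound $\|m_n\|_\infty^{1/n}\ge M^*$ follows immediately from Jensen's inequality, since $\frac{1}{2\pi}\int_0^{2\pi}\log|m_n(e^{it})|\,dt = n\log M^*$ by rotation invariance of Lebesgue measure on $\T$. For the upper bound your truncation idea is right, and in fact simpler than you indicate: since $\psi\le \psi_+ - \min(\psi_-,N)$ and the right-hand side is \emph{continuous} on $\T$, unique ergodicity gives
\[
\limsup_{n\to\infty}\sup_t\frac{1}{n}\sum_{k=0}^{n-1}\psi(t+k\alpha)\ \le\ \frac{1}{2\pi}\int_0^{2\pi}\bigl(\psi_+ - \min(\psi_-,N)\bigr)\,dt,
\]
and letting $N\to\infty$ (monotone convergence, using $\psi_-\in L^1(\T)$) yields $\log M^*$. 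You do not need any quantitative control on orbit occupation times of small arcs; that part of your sketch is unnecessary.

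Two small corrections to your side remarks, neither of which affects the argument. First, boundary zeros of a general $m\in A(\D)$ need not be isolated: there exist nonzero functions in $A(\D)$ vanishing on a Cantor-type subset of $\T$. Second, and consequently, such zeros need not have a finite ``order'' $\ell$, so the local estimate $\psi_-(t)\le \ell\log\frac{C}{|t-t_0|}$ is not available in general. What you actually need---and what is true---is only that $\log|m|\in L^1(\T)$, which follows from $A(\D)\subset H^\infty(\D)$ and the standard theory of boundary values. Your truncation argument uses only this integrability, so the proof goes through once you drop the unneeded local-order discussion.
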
 	 
If $m\in H^\infty (\D)$, then we set 
\[   M^* :=\exp\left(  \frac{1}{2\pi} \int_0^{2\pi}\log |m^*(e^{it})|dt\right) <\infty ,  \]
where $m^*$ denotes the radial limit of $m$. In that case we merely have a lower estimate of the spectral radius. 
\begin{lem}\label{lem:5.6}
Let $m\in H^\infty(\D)$, $|\beta|=1$, $\beta^n\neq 1$ for all $n\in\N$, and $X=H^p(\D)$ with $1\leq p\leq \infty$. 
Then \[  r_\sigma (T_X) \geq M^*.    \]  
\end{lem}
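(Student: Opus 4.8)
The plan is to bound the iterates $\|T_X^n\|_{\LX}$ from below and then invoke the Gelfand formula $r_\sigma(T_X)=\lim_{n\to\infty}\|T_X^n\|_{\LX}^{1/n}$ recalled just before the statement. Recall that $(T^nf)(z)=m_n(z)f(\beta^nz)$ with $m_n(z)=m(z)m(\beta z)\cdots m(\beta^{n-1}z)\in H^\infty(\D)$. We may assume $m\not\equiv0$, the case $m\equiv0$ being trivial. Since $\|e_0\|_{H^p(\D)}=1$ for every $p\in[1,\infty]$ and $(T^ne_0)(z)=m_n(z)$, testing on the single vector $e_0$ gives
\[
\|T_X^n\|_{\LX}\;\geq\;\|m_n\|_{H^p(\D)}\;=\;\left(\frac{1}{2\pi}\int_0^{2\pi}|m_n^*(e^{it})|^p\,dt\right)^{1/p}\qquad(1\leq p<\infty),
\]
and $\|T_X^n\|_{\LX}\geq\|m_n\|_{H^\infty(\D)}=\|m_n^*\|_{L^\infty(\T)}$ when $p=\infty$, where $m_n^*$ denotes the radial boundary function of $m_n$.

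Next I would replace the $L^p$-mean of the boundary values by their geometric mean. Since $m_n$ is a non-zero $H^\infty$ function (as $m\not\equiv0$), $\log|m_n^*|$ is bounded above and integrable on $\T$, a classical fact. By concavity of $\log$ one has $\exp\left(\frac{1}{2\pi}\int_0^{2\pi}\log h(t)\,dt\right)\leq\frac{1}{2\pi}\int_0^{2\pi}h(t)\,dt$ for every non-negative $h\in L^1(\T)$; applying this with $h=|m_n^*|^p$ yields
\[
\|m_n\|_{H^p(\D)}\;\geq\;\exp\left(\frac{1}{2\pi}\int_0^{2\pi}\log|m_n^*(e^{it})|\,dt\right),
\]
and the same bound holds trivially for $p=\infty$.

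It then remains to evaluate the exponent. Writing $m_n=\prod_{k=0}^{n-1}U_{\beta^k}m$ as a finite product of $H^\infty$ functions, its radial limit satisfies $m_n^*(e^{it})=\prod_{k=0}^{n-1}m^*(\beta^ke^{it})$ for almost every $t$, hence $\log|m_n^*(e^{it})|=\sum_{k=0}^{n-1}\log|m^*(\beta^ke^{it})|$ a.e. Integrating term by term and using the invariance of normalized Lebesgue measure on $\T$ under the rotation $e^{it}\mapsto\beta^ke^{it}$, each summand contributes $\frac{1}{2\pi}\int_0^{2\pi}\log|m^*(e^{it})|\,dt=\log M^*$, so that $\frac{1}{2\pi}\int_0^{2\pi}\log|m_n^*(e^{it})|\,dt=n\log M^*$. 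Combining the displays gives $\|T_X^n\|_{\LX}\geq(M^*)^n$ for every $n\in\N$, and therefore $r_\sigma(T_X)=\lim_{n\to\infty}\|T_X^n\|_{\LX}^{1/n}\geq M^*$.

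There is no substantial obstacle here; the argument is essentially routine. The only points that deserve some care are the almost-everywhere identification of the boundary function of the product $m_n$ with the product of the rotated boundary functions of $m$, which rests only on the fact that rotations preserve Lebesgue-null subsets of $\T$, and the integrability of $\log|m^*|$, which makes $M^*$ finite and strictly positive and legitimizes the application of Jensen's inequality. Using the single test vector $e_0$ instead of the full multiplier norm of $M_{m_n}$ on $H^p$ is also what lets one handle all $p\in[1,\infty]$ at once.
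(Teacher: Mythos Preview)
Your argument is correct, but it follows a genuinely different route from the paper's.

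The paper first uses the identity $r_\sigma(T_X)=\lim_{n\to\infty}\|m_n\|_{H^\infty(\D)}^{1/n}$ (relying on the fact, stated just before Lemma~5.5, that the multiplier norm on $H^p$ equals the $H^\infty$ norm, so $\|T_X^n\|=\|m_n\|_{H^\infty}$ for every $p$). It then rewrites $\|m_n\|_{H^\infty}=\mathop{\mathrm{ess\,sup}}_\theta|m_n^*(e^{i\theta})|$ and observes that
\[
\frac{1}{n}\log|m_n^*(e^{i\theta})|=\frac{1}{n}\sum_{k=0}^{n-1}\log|m^*(\beta^k e^{i\theta})|\longrightarrow \frac{1}{2\pi}\int_0^{2\pi}\log|m^*(e^{it})|\,dt
\]
for a.e.\ $\theta$ by Birkhoff's pointwise ergodic theorem (the rotation by $\beta$ being ergodic since $\beta$ is aperiodic), which immediately yields $\liminf_n\|m_n\|_{H^\infty}^{1/n}\geq M^*$.

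Your proof instead tests on $e_0$, passes from the $L^p$-mean of $|m_n^*|$ to its geometric mean via Jensen's inequality, and then computes that geometric mean exactly using nothing more than the rotation invariance of Lebesgue measure on $\T$. This avoids the ergodic theorem entirely and produces the sharper non-asymptotic inequality $\|T_X^n\|\geq (M^*)^n$ for every $n$. The paper's proof is a one-liner once Birkhoff is quoted, but yours is more elementary and self-contained; it also never needs the identification of the operator norm with $\|m_n\|_{H^\infty}$, since the lower bound $\|T_X^n\|\geq\|m_n\|_{H^p}$ from the single test vector already suffices.
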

\begin{proof}
One has 
\[ \sup_{|z|< 1} |m_n(z)|^{1/n}=\mathop{ess}\sup_{\theta\in\R}|m_n^*(e^{i\theta})|^{1/n}    \]
since $\|f\|_{H^\infty}	=\|f^*\|_{L^\infty (\T)}$. But 
\begin{eqnarray*}
	 \log|m_n^*(e^{i\theta})|^{1/n} & = & \frac{1}{n}(\log |m^*(e^{i\theta})| + \log |m^*(\beta e^{i\theta})|+\cdots +   \log |m^*(\beta^{n-1} e^{i\theta})|)  \\
	  &  & \to \frac{1}{2\pi} \int_0^{2\pi} \log|  m^*(e^{it})|dt  \,\, \mbox{  $\theta$-a.e. as }n\to\infty
	\end{eqnarray*}
by the pointwise ergodic theorem.  This implies the claim.  
\end{proof}	
In any case, for $m\in A(\D)$, by Lemma~\ref{lem:5.5}, 
\begin{equation}\label{eq:5.8}
r_\sigma (T_{A(\D)}) =r_\sigma (T_{H^p(\D)});
\end{equation} 
and for $m\in W^+(\D)\subset A(\D)$, 
\begin{equation}\label{eq:5.9}
r_\sigma (T_{W^+(\D)}) =r_\sigma (T_{A(\D)})=r_\sigma (T_{H^p(\D)}).
\end{equation} 
For $m\in A(\D)$, the operator $T_{A(\D)}$ is invertible if and only if $m(z)\neq 0$ for all $z\in \overline{\D}$. In that case $\frac{1}{m}\in A(\D)$ and 
\[     (T^{-1}f)(z)=\frac{1}{m(\overline{\beta} z)}f(\overline{\beta}z).      \] 
If $m\in W^+(\D)$ such that $m(z)\neq 0$ for all $z\in \overline{\D}$, then by Wiener's theorem, 
$\frac{1}{m}\in W^+(\D)$. Thus  $T_{W^+(\D)}$ is invertible (see for instance \cite[Chap.18, exercise 8]{Ru74}).

Note that for $m\in A(\D)$ such that $m(z)\neq 0$ for all $z\in \overline{\D}$, 
\begin{equation}\label{eq:5.10}
\exp\left( \frac{1}{2\pi}\int_0^{2\pi}\log |m(e^{it})|dt\right)  =|m(0)|,
\end{equation}
(see \cite[15.18]{Ru74}). 
From this we obtain the following result (due to Kamowitz \cite[Theorem 4.7]{Ka78} in the case $X=A(\D)$) whose short proof is repeated here. 
\begin{prop}\label{prop:5.7}
  Let 
  \begin{itemize}
      \item[a)] 
  $X=A(\D)$ or $H^p(\D)$ ($1\leq p\leq \infty$) and $m\in A(\D)$; or
    \item[b)]$X=W^+(\D)$ and $m\in W^+(\D)$. 
\end{itemize}    
  Assume that $\vert \beta \vert = 1$, $\beta^n \neq 1$ for all $n \in \N$, and that $m(z)\neq 0$ for all $z\in\overline{\D}$. Then 
  \[  \sigma(T_X)=\{  \lambda \in\C :|\lambda|=|m(0)|  \}.   \]
 \end{prop}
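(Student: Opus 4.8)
The plan is to deduce this from the two ingredients already assembled: the rotational invariance of the spectrum (Proposition~\ref{prop:5.2}) and the computation of the spectral radius (Lemmas~\ref{lem:5.5} and \ref{lem:5.6}, together with the identity (\ref{eq:5.10})). Since $m(z)\neq 0$ for all $z\in\overline{\D}$, the operator $T_X$ is invertible in all three cases (using $1/m\in A(\D)$, resp. $1/m\in W^+(\D)$ by Wiener's theorem, resp. $1/m\in H^\infty(\D)$), so $0\in\rho(T_X)$ and the same inverse formula $(T_X^{-1}f)(z)=f(\overline{\beta}z)/m(\overline{\beta}z)$ shows that $T_X^{-1}$ is again a weighted composition operator induced by the aperiodic rotation $\overline{\beta}$ with weight $\widetilde m(z)=1/m(\overline{\beta}z)$, which again vanishes nowhere on $\overline{\D}$.

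First I would pin down the spectral radius of $T_X$. By Lemma~\ref{lem:5.5}(a) (resp. the chain of equalities (\ref{eq:5.8})--(\ref{eq:5.9})) one has $r_\sigma(T_X)=M^*=\exp\!\left(\frac{1}{2\pi}\int_0^{2\pi}\log|m(e^{it})|\,dt\right)$ in every case, and since $m$ has no zero on $\overline{\D}$, the identity (\ref{eq:5.10}) gives $M^*=|m(0)|$. Hence $r_\sigma(T_X)=|m(0)|$, which already shows $\sigma(T_X)\subset\{\lambda:|\lambda|\leq|m(0)|\}$. Applying the same reasoning to the invertible operator $T_X^{-1}$ — whose weight $\widetilde m$ satisfies $\widetilde m(0)=1/m(\overline{\beta}\cdot 0)=1/m(0)$ — yields $r_\sigma(T_X^{-1})=|\widetilde m(0)|=1/|m(0)|$, so $\sigma(T_X^{-1})\subset\{\mu:|\mu|\leq 1/|m(0)|\}$. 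By the spectral mapping theorem for the inverse, $\sigma(T_X)=\{1/\mu:\mu\in\sigma(T_X^{-1})\}$, which forces $|\lambda|\geq|m(0)|$ for every $\lambda\in\sigma(T_X)$. Combining the two inclusions gives $\sigma(T_X)\subset\{\lambda:|\lambda|=|m(0)|\}$.

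It remains to check the reverse inclusion $\{\lambda:|\lambda|=|m(0)|\}\subset\sigma(T_X)$. This is immediate from Proposition~\ref{prop:5.2}(a): since $m(0)\neq 0$, the circle $m(0)\T=\{\lambda:|\lambda|=|m(0)|\}$ is contained in $\sigma(T_X)$. (Alternatively one may note that $\sigma(T_X)$ is a nonempty closed subset of the circle $|\lambda|=|m(0)|$ which is rotation-invariant by Proposition~\ref{prop:5.2}(b), hence equals the whole circle.) Putting everything together yields $\sigma(T_X)=\{\lambda\in\C:|\lambda|=|m(0)|\}$, as claimed.

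I do not expect any serious obstacle here: the statement is essentially a corollary of the machinery already developed, and the only minor points to be careful about are (i) verifying that $T_X^{-1}$ is genuinely a weighted composition operator of the same type so that Lemma~\ref{lem:5.5} applies to it, and (ii) correctly identifying the weight of $T_X^{-1}$ at the origin as $1/m(0)$. Both are routine. The mild subtlety in case (a) with $X=H^p(\D)$ is that Lemma~\ref{lem:5.6} only gives a lower bound $r_\sigma(T_X)\geq M^*$ on its own, but the equality (\ref{eq:5.8}) supplies the matching upper bound via the disc-algebra computation, so the argument goes through uniformly.
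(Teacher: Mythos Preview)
Your proof is correct and follows essentially the same approach as the paper: compute $r_\sigma(T_X)=|m(0)|$ via Lemma~\ref{lem:5.5} and (\ref{eq:5.10}), apply the same to $T_X^{-1}$ to get $r_\sigma(T_X^{-1})=1/|m(0)|$, combine to force $|\lambda|=|m(0)|$ for all $\lambda\in\sigma(T_X)$, and obtain the reverse inclusion from rotational invariance. Your write-up is in fact more explicit than the paper's on two points---the identification of $T_X^{-1}$ as a weighted composition operator of the same type, and the handling of the $H^p$ case via (\ref{eq:5.8})---but the underlying argument is identical.
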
 
\begin{proof}
By (\ref{eq:5.10}) and Lemma~\ref{eq:5.5} we have \[r_\sigma (T_X) =|m(0)| \mbox { and } r_\sigma(T^{-1})=\frac{1}{|m(0)|}.\] 	
If $\lambda\in \sigma(T_X)$ then $\frac{1}{|\lambda|}\leq r_\sigma (T_X^{-1})= \frac{1}{|m(0)|}$. Thus 
\[  |m(0)|\leq |\lambda|\leq r_\sigma (T_X)=|m(0)|.    \]
Now the claim follows from Corollary~\ref{cor:2.2}. 
\end{proof}
\section{ Spectral decomposition and Waelbroeck spectrum}\label{sec:6}
We now continue our study on $\Hol(\D)$. It is a Fr\'echet space for the topology of compact convergence which is defined by the seminorms 
\[ \|f\|_{A(r\D)}:=\sup_{|z|\leq r}|f(z)|, \,\,\, f\in \Hol(\D)  \]
for $r\in (0,1)$. Thus $f_n\to f$ in $\Hol(\D)$ if and only if $\|f_n-f\|_{A(r\D)}\to 0$ for all $0<r<1$. 

Let $T:\Hol(\D)\to \Hol(\D)$ be linear and continuous. If $\lambda\in\rho(T)$, then $R_\lambda=(\lambda\Id -T)^{-1}$ is continuous (by the closed graph theorem). This means the following:\\
for all $r\in (0,1)$ there exist $r'\in (0,1)$ and $c>0$ such that 
\begin{equation}\label{eq:6.1}
\|  R_\lambda f\|_{A(r\D)}\leq c\|f\|_{A(r'\D)}\mbox{ for all }f\in\Hol(\D).
\end{equation}  
We have seen that the spectrum $\sigma(T)$ of $T$ is not closed, in general, and the resolvent may not be strongly continuous on $\rho(T)$, see Section~\ref{sec:4}.  For that reason we will also consider the \emph{Waelbroeck spectrum}. Here we define the \emph{Waelbroeck resolvent set} by 
\begin{eqnarray*}
\rho_W(T) & = & \{  \lambda\in\rho(T): \exists \delta >0\mbox{ such that }\overline{D}(\lambda,\delta) \subset \rho(T)\mbox{ and } \\
 &  & \sup_{|\lambda-\mu|\leq \delta} \|R_\mu f\|_{A(r\D)}<\infty\mbox{ for all }f\in\Hol(\D)\mbox{ and }r<1  \}	
	\end{eqnarray*}   
Then the \emph{Waelbroeck spectrum} is by definition $\sigma_W(T)=\C\setminus \rho_W(T)$. 

It is clear that $\sigma_W(T)$ is an open subset of $\C$. One may express $\rho_W(T)$ also by an \emph{equicontinuity property}, i.e. (\ref{eq:6.1}) is asked to hold uniformly with respect to $\lambda$ (see (\ref{eq:6.2})). 
\begin{lem}\label{lem:6.1}
Let $\Lambda\subset \rho_W(T)$ be compact. Then for every $0<r<1$ there exist $0<r'<1$ and $c\geq 0$ such that 
\begin{equation}\label{eq:6.2}
\|R_\lambda f\|_{A(r\D)}\leq c\|f\|_{A(r'\D)}
\end{equation}  
for all $f\in \Hol(\D)$ and all $\lambda\in\Lambda$. 
\end{lem}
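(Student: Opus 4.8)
The plan is to deduce the uniform estimate from the definition of $\rho_W(T)$ by a compactness argument coupled with the uniform boundedness principle. Fix $0<r<1$. For each $\lambda\in\Lambda$, since $\lambda\in\rho_W(T)$ there is $\delta_\lambda>0$ with $\overline{D}(\lambda,\delta_\lambda)\subset\rho(T)$ and
\[
\sup_{\mu\in\overline{D}(\lambda,\delta_\lambda)}\|R_\mu f\|_{A(r\D)}<\infty\quad\text{for all }f\in\Hol(\D).
\]
The open discs $D(\lambda,\delta_\lambda)$, $\lambda\in\Lambda$, cover the compact set $\Lambda$, so finitely many of them, say $D(\lambda_1,\delta_{\lambda_1}),\dots,D(\lambda_k,\delta_{\lambda_k})$, already cover $\Lambda$. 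It therefore suffices to produce, for each $j$, a pair $(r'_j,c_j)$ witnessing \eqref{eq:6.2} on $D(\lambda_j,\delta_{\lambda_j})$, and then take $r'=\max_j r'_j$ and $c=\max_j c_j$ (using that $\|\cdot\|_{A(r'\D)}\leq\|\cdot\|_{A(\rho\D)}$ whenever $r'\leq\rho$).

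So fix one $j$ and write $\Delta=\overline{D}(\lambda_j,\delta_{\lambda_j})$. First I would reduce to a single seminorm on the target side and a single seminorm on the source side. For the target this is already done (we fixed $r$). For the source, the key point is that $\Hol(\D)$ is a Fréchet space and the family $\{R_\mu:\mu\in\Delta\}$ is, by hypothesis, pointwise bounded: for each $f\in\Hol(\D)$, $\sup_{\mu\in\Delta}\|R_\mu f\|_{A(r\D)}<\infty$. Consider the maps $f\mapsto R_\mu f$ as continuous linear maps from the Fréchet space $\Hol(\D)$ into the Banach space $A(r\D)$ (equipped with $\|\cdot\|_{A(r\D)}$); each $R_\mu$ is continuous since $R_\mu\in\mathcal L(\Hol(\D))$ and the inclusion $\Hol(\D)\hookrightarrow A(r\D)$ is continuous. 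Pointwise boundedness of this family plus the Banach--Steinhaus theorem for a Fréchet domain and normed range yields equicontinuity of $\{R_\mu:\mu\in\Delta\}$ as maps $\Hol(\D)\to A(r\D)$. Equicontinuity at $0$ for a Fréchet space means exactly: there exist a continuous seminorm $p$ on $\Hol(\D)$ and a constant $c_j\geq0$ with $\|R_\mu f\|_{A(r\D)}\leq c_j\,p(f)$ for all $f\in\Hol(\D)$ and all $\mu\in\Delta$. Since the seminorms $\|\cdot\|_{A(\rho\D)}$, $\rho<1$, form an increasing fundamental system, $p$ is dominated by $C\|\cdot\|_{A(r'_j\D)}$ for some $r'_j<1$ and $C>0$; absorbing $C$ into the constant gives \eqref{eq:6.2} on $\Delta$, as desired.

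The main obstacle is the careful invocation of the uniform boundedness principle in the Fréchet setting: one must check that $\Hol(\D)$ is barrelled (every Fréchet space is, being a complete metrizable locally convex space) so that the Banach--Steinhaus theorem applies to the pointwise bounded family $\{R_\mu\}$, and one must phrase the conclusion of equicontinuity correctly in terms of a single seminorm of the Fréchet structure. Once that is in place, the passage from a general continuous seminorm on $\Hol(\D)$ to one of the distinguished seminorms $\|\cdot\|_{A(r'\D)}$, and the final gluing over the finite subcover of $\Lambda$, are routine; the monotonicity $\|\cdot\|_{A(r'\D)}\leq\|\cdot\|_{A(\rho\D)}$ for $r'\leq\rho$ is all that is needed to combine the finitely many pairs $(r'_j,c_j)$ into one pair $(r',c)$.
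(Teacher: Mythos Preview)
Your proof is correct and follows essentially the same strategy as the paper: compactness to obtain pointwise boundedness of the family $\{R_\mu\}$, then the uniform boundedness principle in the Fr\'echet space $\Hol(\D)$ to upgrade to equicontinuity. The only differences are organizational: the paper first deduces $\sup_{\lambda\in\Lambda}\|R_\lambda f\|_{A(r\D)}<\infty$ over all of $\Lambda$ and then carries out the Baire category argument by hand, whereas you cover $\Lambda$ by finitely many discs, invoke Banach--Steinhaus as a black box on each, and combine at the end.
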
 
\begin{proof}
Let $0<r<1$. By the compactness of $\Lambda$ and the definition of $\rho_W(T)$, 
\[\sup_{\lambda\in\Lambda} \|R_\lambda f\|_{A(r\D)}<\infty \mbox{ for all }f\in \Hol(\D) \mbox{ and } r \in (0;1).\]
Now one proceeds as for the proof of the uniform boundedness principle. For $n \in \N$, let
\[   X_n:=\{  f\in\Hol(\D):   \|R_\lambda f\|_{A(r\D)}\leq n\,\,\,  \forall \lambda\in\Lambda  \}.\]    	
Since $X_n$ is closed and $\Hol(\D)=\cup_{n\in\N} X_n$, by Baire's theorem, there exists $n_0\in\N$ such that the interior of 
$X_{n_0}$ is nonempty. Thus there exist $f_0\in X_{n_0}$, $r'\in (0,1)$, $\varepsilon>0$ such that 
\[  {\mathcal U} :=\{  f\in\Hol(\D): \|f-f_0\|_{A(r'\D)}\leq \varepsilon \}\subset X_{n_0}.     \]
Let $g\in\Hol(\D)$ such that $\|g\|_{A(r'\D)}\leq 1$. Then $f_0 +\varepsilon g\in{\mathcal U}$. 
Hence 
\[  \varepsilon \|R_\lambda g\|_{A(r\D)}\leq \|  R_\lambda (f_0 +\varepsilon g)\|_{A(r\D)} +\|R_\lambda f_0\|_{A(r\D)}\leq 2n_0\mbox{ for all }\lambda\in\Lambda.    \] 
Thus $\|R_\lambda f\|_{A(r\D)}\leq \frac{2n_0}{\varepsilon}\|f\|_{A(r'\D)}$ for all $f\in \Hol(\D)$ and $\lambda\in\Lambda$. 
\end{proof}	
If $\Omega\subset \C$ is open, a function $F:\Omega\to \Hol(\D)$ is called \emph{holomorphic}  if 
\[   \lim_{\lambda \to\lambda_0} \frac{F(\lambda)-F(\lambda_0)}{\lambda-\lambda_0}=:F'(\lambda_0)\]
exists in $\Hol(\D)$ for all $\lambda_0\in\Omega$. 
\begin{lem}\label{lem:6.2}
Let $f\in\Hol(\D)$. The function 
\[\lambda\mapsto R_\lambda f:\rho_W(T)\to\Hol(\D)\]
 is holomorphic and thus continuous. In particular, for each $z\in\D$ the function  $\lambda\mapsto (R_\lambda f)(z)$ is holomorphic on $\rho_W(T)$.    
\end{lem}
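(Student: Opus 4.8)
The plan is to prove holomorphy directly from the definition, working through the resolvent identity and the equicontinuity supplied by Lemma~\ref{lem:6.1}. Fix $f\in\Hol(\D)$ and $\lambda_0\in\rho_W(T)$, and choose $\delta>0$ so that $\overline{D}(\lambda_0,\delta)\subset\rho(T)$ together with the uniform boundedness of $\{R_\mu f' : |\mu-\lambda_0|\le\delta\}$ on compacts, which holds for every $f'\in\Hol(\D)$ by the definition of $\rho_W(T)$. Since $\overline{D}(\lambda_0,\delta)$ is a compact subset of $\rho_W(T)$ (one checks easily that $\rho_W(T)$ is open, so after shrinking $\delta$ we may assume this closed disc lies in $\rho_W(T)$), Lemma~\ref{lem:6.1} gives, for each $0<r<1$, constants $0<r'<1$ and $c\ge 0$ with $\|R_\mu g\|_{A(r\D)}\le c\,\|g\|_{A(r'\D)}$ for all $g\in\Hol(\D)$ and all $\mu$ in that disc.

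Next I would use the resolvent identity $R_\mu - R_{\lambda_0} = (\lambda_0-\mu)R_\mu R_{\lambda_0}$, valid for $\mu,\lambda_0\in\rho(T)$, to write
\[
\frac{R_\mu f - R_{\lambda_0}f}{\mu-\lambda_0} = -R_\mu R_{\lambda_0}f = -R_{\lambda_0}^2 f + (\lambda_0-\mu)R_\mu R_{\lambda_0}^2 f .
\]
The candidate derivative is $F'(\lambda_0) = -R_{\lambda_0}^2 f$, which indeed lies in $\Hol(\D)$. To conclude it suffices to show that the remainder term $(\lambda_0-\mu)R_\mu R_{\lambda_0}^2 f$ tends to $0$ in $\Hol(\D)$ as $\mu\to\lambda_0$. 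For any $0<r<1$, applying the equicontinuity estimate with $g=R_{\lambda_0}^2 f\in\Hol(\D)$ yields
\[
\bigl\|(\lambda_0-\mu)R_\mu R_{\lambda_0}^2 f\bigr\|_{A(r\D)} \le |\lambda_0-\mu|\,c\,\bigl\|R_{\lambda_0}^2 f\bigr\|_{A(r'\D)} \xrightarrow[\mu\to\lambda_0]{} 0,
\]
so the difference quotient converges to $-R_{\lambda_0}^2 f$ in every seminorm $\|\cdot\|_{A(r\D)}$, i.e. in $\Hol(\D)$. Thus $\lambda\mapsto R_\lambda f$ is holomorphic on $\rho_W(T)$, hence continuous; and composing with the (continuous linear) evaluation functional $g\mapsto g(z)$ shows $\lambda\mapsto (R_\lambda f)(z)$ is holomorphic on $\rho_W(T)$ for each $z\in\D$.

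The main obstacle is not any single computation but making sure the quantitative input is in place: one must verify that $\rho_W(T)$ is open so that a full closed disc around $\lambda_0$ sits inside it (this is the remark made just before Lemma~\ref{lem:6.1} in the text), and then invoke Lemma~\ref{lem:6.1} on that compact disc to get the \emph{uniform} estimate $\|R_\mu g\|_{A(r\D)}\le c\|g\|_{A(r'\D)}$ with constants independent of $\mu$. Without that uniformity in $\mu$, the bound on the remainder term would not obviously go to zero. Everything else — the resolvent identity and the seminorm estimate — is routine.
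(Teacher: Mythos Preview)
Your proof is correct and follows essentially the same route as the paper's: pick a closed disc in $\rho_W(T)$ around $\lambda_0$, invoke Lemma~\ref{lem:6.1} for equicontinuity, and use the resolvent identity to identify the derivative as $-R_{\lambda_0}^2 f$. (Minor slip: iterating $R_\mu-R_{\lambda_0}=(\lambda_0-\mu)R_\mu R_{\lambda_0}$ gives the remainder $-(\lambda_0-\mu)R_\mu R_{\lambda_0}^2 f$, not $+(\lambda_0-\mu)R_\mu R_{\lambda_0}^2 f$; the sign is irrelevant to the convergence argument.)
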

\begin{proof}
Let $\lambda_0\in\rho_W(T)$. Choose $\delta>0$ such that $\overline{D}(\lambda_0,\delta)\subset \rho_W(T)$. Let $0<r<1$ be arbitrary. By Lemma~\ref{lem:6.1} there exist $c>0$, $0<r'<1$ such that 
\[   \|R_\lambda f\|_{A(r\D)}\leq c\|f\|_{A(r'\D)}  \]
for all $f\in\Hol(\D)$ and $\lambda\in \overline{D}(\lambda_0,\delta)$.  	
Thus, by the resolvent identity for $\lambda\in \overline{D}(\lambda_0,\delta)$, 
\begin{eqnarray*}
\| R_\lambda f-R_{\lambda_0}f\|_{A(r\D)} & = & |\lambda -\lambda_0| \|R_\lambda R_{\lambda_0} f\|_{A(r\D)}\\
  & \leq & |\lambda-\lambda_0|c\|R_{\lambda_0} f\|_{A(r'\D)}	
\end{eqnarray*}
Hence $R_\lambda f\to R_{\lambda_0}f$ in $\Hol(\D)$ as $\lambda\to\lambda_0$. In particular
\[  \frac{R_\lambda f-R_{\lambda_0}f}{\lambda-\lambda_0}=-R_\lambda R_{\lambda_0}f\to -R_{\lambda_0}^2 f    \]
in $\Hol(\D)$ as $\lambda\to\lambda_0$. 
\end{proof}	
We will need the following spectral decomposition which is well-known in the Banach space case. To be complete we give a proof in this special situation. For  a much more general result  in Fr\'echet spaces we refer to 
Th\'eor\`eme III. 3.11   in the monograph \cite{Va82} by Vasilescu. 
\begin{thm}\label{th:6.3}[Spectral decomposition]
 Let $T\in{\mathcal L}(\Hol(\D))$ and $r_0\in (0,\infty)$ such that $r_0\T\subset \rho_W(T)$. Then there exist closed subspaces  $X_1,X_2$ of $\Hol(\D)$ such that  $TX_j\subset X_j$, $j=1,2$, $\Hol(\D)=X_1\oplus X_2$ and for $T_j:=T_{|X_j}$, 
 \[\sigma_W(T_1)=\{   \lambda\in\sigma_W(T_X):|\lambda|<r_0  \} \mbox{ and }   \sigma_W(T_2)=\{   \lambda\in\sigma_W(T_X):|\lambda|>r_0  \}.\]
\end{thm}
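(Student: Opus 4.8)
The plan is to mimic the classical Riesz–Dunford functional calculus, but to be careful about the functional-analytic subtleties in the Fréchet space $\Hol(\D)$. The spectral projection is defined by a contour integral
\[
  P := \frac{1}{2\pi i}\int_{|\mu|=r_0} R_\mu \, d\mu,
\]
interpreted as an operator on $\Hol(\D)$; for fixed $f\in\Hol(\D)$ and fixed $z\in\D$ the scalar integral $\frac{1}{2\pi i}\int_{|\mu|=r_0}(R_\mu f)(z)\,d\mu$ makes sense because by Lemma~\ref{lem:6.2} the integrand is a continuous (indeed holomorphic) function of $\mu$ on a neighbourhood of the circle $r_0\T\subset\rho_W(T)$. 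First I would check that $Pf$ so defined is again holomorphic in $z$ and that $P\in\mathcal L(\Hol(\D))$: continuity follows from the equicontinuity estimate of Lemma~\ref{lem:6.1} applied with $\Lambda=r_0\T$, since
\[
  \|Pf\|_{A(r\D)}\le \frac{1}{2\pi}\int_{|\mu|=r_0}\|R_\mu f\|_{A(r\D)}\,|d\mu|\le r_0\, c\,\|f\|_{A(r'\D)}.
\]

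Next I would establish the algebraic properties of $P$. Using the resolvent identity $R_\mu R_\nu=(R_\nu-R_\mu)/(\mu-\nu)$ together with Cauchy's theorem applied to the holomorphic (Lemma~\ref{lem:6.2}) resolvent on two concentric circles of radii $r_0$ and $r_0'$ (slightly smaller, still a subset of $\rho_W(T)$ after perturbing $r_0$, which is legitimate since $\rho_W(T)$ is open), one gets $P^2=P$. Likewise $TP=PT$ because $T$ commutes with every $R_\mu$. Then set $X_1:=P\,\Hol(\D)=\ker(\Id-P)$ and $X_2:=(\Id-P)\,\Hol(\D)=\ker P$. Both are closed (kernels of continuous idempotents), $\Hol(\D)=X_1\oplus X_2$, and both are $T$-invariant by $TP=PT$. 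Writing $T_j=T_{|X_j}$, the restrictions $R_\mu|_{X_j}$ are easily seen to be the resolvents of $T_j$ for $\mu\in r_0\T$ and, more importantly, the equicontinuity of Lemma~\ref{lem:6.1} passes to the subspaces, so one obtains $r_0\T\subset\rho_W(T_j)$ as well.

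The remaining point is to identify the Waelbroeck spectra of $T_1$ and $T_2$. For $|\mu|<r_0$ one shows directly that on $X_1$ the operator $\mu\Id-T_1$ is invertible, with resolvent given by the standard contour formula $R_\mu^{(1)}=\frac{1}{2\pi i}\int_{|\nu|=r_0}\frac{1}{\mu-\nu}R_\nu|_{X_1}\,d\nu$ (absolutely convergent, with equicontinuous bounds locally uniform in $\mu$ on $D(0,r_0)$, again by Lemma~\ref{lem:6.1}); symmetrically, for $|\mu|>r_0$ one inverts $\mu\Id-T_2$ on $X_2$ by the analogous formula with the opposite sign, using that $\frac{1}{\mu-\nu}$ is bounded for $|\nu|=r_0$, $|\mu|>r_0$. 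Conversely, if $\mu$ with $|\mu|<r_0$ were in $\rho_W(T_2)$ (resp. $|\mu|>r_0$ in $\rho_W(T_1)$), one patches together the resolvents on $X_1$ and $X_2$ to produce a Waelbroeck-resolvent of $T$ at $\mu$ — contradicting nothing by itself, so the real content is that $\sigma_W(T_1)\subset\overline{D}(0,r_0)$ and $\sigma_W(T_2)\subset\C\setminus D(0,r_0)$, combined with $\sigma_W(T)=\sigma_W(T_1)\cup\sigma_W(T_2)$ (which follows from the direct-sum decomposition and the patching argument), yields the stated splitting $\sigma_W(T_1)=\{\lambda\in\sigma_W(T):|\lambda|<r_0\}$ and $\sigma_W(T_2)=\{\lambda\in\sigma_W(T):|\lambda|>r_0\}$.

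I expect the main obstacle to be purely the bookkeeping of \emph{equicontinuity uniformly in the spectral parameter}: every time a contour integral of $R_\mu$ is formed, one must know the family $\{R_\mu : \mu\in\Lambda\}$ is equicontinuous on the relevant compact $\Lambda\subset\rho_W(T)$, and one must know this is inherited by the restrictions to $X_1,X_2$ and survives the passage to the subspace-resolvents. Lemma~\ref{lem:6.1} is exactly the tool for this, but applying it correctly at each step (choosing the auxiliary radii, the seminorm indices $r,r'$, and checking the integrals converge in the Fréchet topology rather than just pointwise in $z$) is where the care is needed; the algebraic identities $P^2=P$, $TP=PT$ are then formally identical to the Banach-space case.
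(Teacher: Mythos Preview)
Your approach is essentially the paper's: define the Riesz projection $P=\frac{1}{2\pi i}\int_{|\mu|=r_0}R_\mu\,d\mu$, verify $P^2=P$ via two concentric circles and the resolvent identity, use $TP=PT$, and produce explicit contour formulas for the resolvents on the two summands, invoking Lemma~\ref{lem:6.1} for the equicontinuity needed to stay inside $\rho_W$.

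There is one concrete slip to fix. In your third paragraph you have the roles of $X_1$ and $X_2$ reversed. With $X_1=P\,\Hol(\D)$ and $X_2=\ker P$, the formula
\[
  S_\mu=\frac{1}{2\pi i}\int_{|\nu|=r_0}\frac{1}{\mu-\nu}R_\nu\,d\nu\qquad(|\mu|<r_0)
\]
satisfies $(\mu\Id-T)S_\mu=P-\Id$ (since $\frac{1}{2\pi i}\int_{|\nu|=r_0}\frac{d\nu}{\mu-\nu}=-1$), which is $-\Id$ on $X_2=\ker P$ and zero on $X_1$. Thus $-S_\mu$ inverts $\mu\Id-T_2$ on $X_2$, not $\mu\Id-T_1$ on $X_1$. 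Symmetrically, for $|\mu|>r_0$ the analogous integral inverts $\mu\Id-T_1$ on $X_1$. Your stated conclusions $\sigma_W(T_1)\subset\overline{D}(0,r_0)$ and $\sigma_W(T_2)\subset\C\setminus D(0,r_0)$ are correct and match the paper; only the sentence assigning the resolvent formulas to the subspaces needs to be swapped. (As a side remark, the paper itself proves only these inclusions; the patching argument you sketch for $\sigma_W(T)=\sigma_W(T_1)\cup\sigma_W(T_2)$, needed for the stated equalities, is a point you handle more carefully than the paper does.)
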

For the proof we need two lemmas.
\begin{lem}\label{lem:6.4}
Let $\Lambda\subset\C$ and let $g:\Lambda\times \D\to\C$ such that $g(t,\cdot)\in\Hol(\D)$ for all $t\in\Lambda$. Then $g$ is continuous if and only if 
\[   t\mapsto   g(t,\cdot):\Lambda\to \Hol(\D) \]
is continuous.  	
\end{lem}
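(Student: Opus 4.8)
The plan is to prove the two implications separately. The only structural facts I would use are that the topology of $\Hol(\D)$ is exactly the one given by the seminorms $f\mapsto\|f\|_{A(r\D)}=\sup_{|z|\le r}|f(z)|$ for $0<r<1$, and that each slice $g(t,\cdot)$, being holomorphic on $\D$, is in particular continuous on $\D$. In fact, beyond this last point no use of holomorphy is needed.

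First I would show that if $t\mapsto g(t,\cdot)\colon\Lambda\to\Hol(\D)$ is continuous, then $g$ is continuous on $\Lambda\times\D$. Fix $(t_0,z_0)\in\Lambda\times\D$ and pick $r$ with $|z_0|<r<1$. For $t\in\Lambda$ and $|z|\le r$ the triangle inequality gives
\[ |g(t,z)-g(t_0,z_0)|\ \le\ \|g(t,\cdot)-g(t_0,\cdot)\|_{A(r\D)}\ +\ |g(t_0,z)-g(t_0,z_0)| . \]
As $t\to t_0$ the first term tends to $0$ by the assumed continuity into $\Hol(\D)$, and as $z\to z_0$ inside $r\overline{\D}$ the second term tends to $0$ since $g(t_0,\cdot)$ is continuous; hence $g$ is continuous at $(t_0,z_0)$.

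For the converse, assume $g$ is continuous on $\Lambda\times\D$, fix $t_0\in\Lambda$, $0<r<1$ and $\eps>0$, and look for a neighbourhood $U$ of $t_0$ in $\Lambda$ such that $\|g(t,\cdot)-g(t_0,\cdot)\|_{A(r\D)}\le\eps$ for all $t\in U$. For each $w\in r\overline{\D}$ the joint continuity of $g$ at $(t_0,w)$ provides $\delta_w>0$ and $\rho_w>0$ with $|g(t,z)-g(t_0,w)|<\eps/2$ whenever $|t-t_0|<\delta_w$ and $|z-w|<\rho_w$. Since $r\overline{\D}$ is compact, finitely many discs $D(w_i,\rho_{w_i})$, $i=1,\dots,N$, cover it; set $\delta:=\min_{1\le i\le N}\delta_{w_i}>0$. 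Then for $|t-t_0|<\delta$ and $|z|\le r$, choosing $i$ with $|z-w_i|<\rho_{w_i}$ and combining $|g(t,z)-g(t_0,w_i)|<\eps/2$ with the special case $t=t_0$, namely $|g(t_0,z)-g(t_0,w_i)|<\eps/2$, yields $|g(t,z)-g(t_0,z)|<\eps$; taking the supremum over $|z|\le r$ completes the argument. I do not expect a genuine obstacle here: the only point requiring mild care is that $\Lambda$ need not be compact, so the covering argument must be carried out solely in the $z$-variable over the compact set $r\overline{\D}$, while all estimates are made locally around the fixed $t_0$.
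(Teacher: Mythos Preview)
Your proof is correct and follows essentially the same approach as the paper: both directions rest on compactness of the closed discs $r\overline{\D}$ and a triangle-inequality splitting. The only cosmetic difference is that the paper phrases the harder direction via a sequential contradiction (extract a convergent subsequence $z_n\to z_0$ in the compact set and derive discontinuity of $g$ at $(t_0,z_0)$), whereas you give the equivalent direct $\eps$--$\delta$ covering argument.
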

\begin{proof}
$\Rightarrow$ Let $t_n\to t_0$, $K\subset \D$ compact. Assume that $g(t_n,z)$ does not converge to $g(t_0,z)$ uniformly on $K$.  Then, passing to a subsequence we find $\varepsilon>0$, $z_n\in K$ such that $|g(t_n,z_n)-g(t_0,z_n)|\geq \varepsilon$. We may also assume that $z_n\to z_0$. Thus $g$ is not continuous at $(t_0,z_0)$. \\
$\Leftarrow$ If $t_n\to t_0$ in $\Lambda$ and $z_n\to z_0$ in $\D$, then 
\[|g(t_n,z_n)-g(t_0,z_0)|\leq |g(t_n,z_n)-g(t_0,z_n)| +|g(t_0,z_n)-g(t_0,z_0)|\to 0  \] 
as $n\to\infty$ by hypothesis.    	
\end{proof}	
\begin{rem}\label{rem:6.5}
Lemma~\ref{lem:6.4} allows us to describe $\rho_W(T)$ yet in a different way. It is the largest open set $\Omega$ in $\rho(T)$ such that the mapping 
\[     (\lambda,z)\mapsto (R_\lambda f)(z):\Omega\times \D\to\C   \] 
is continuous for all $f\in\Hol(\D)$. 
\end{rem}
Let $g:[0,2\pi]\to \Hol(\D)$ be continuous. Then we define the Riemann integral of $g$ as 
\begin{equation}\label{eq:6.3}
\int_0^{2\pi} g(t)dt =\lim_{n\to\infty}S(\Pi_n,g).
\end{equation} 
Here, $(\Pi_n)_n$ is a sequence of partitions such that %
$\lim_{n\to\infty}
\delta_{\Pi_n}=0$.  For an arbitrary partition $\Pi=\{ 0=t_0<t_1<\cdots < t_m=2\pi \}$, the Riemann sum is defined as $S(\Pi ,g)=\sum_{i=1}^m g(t_{i-1})(t_i-t_{i-1})$ and $\delta_{\Pi}:=\max_{1\leq i\leq m}(t_{i}-t_{i-1})$.
Since  $g(t)_{|r\D}\in A(r\D)$, the convergence of (\ref{eq:6.3}) follows from the corresponding result in the Banach space $A(r\D)$. 
\begin{lem}\label{lem:6.6}
Let $g:[0,2\pi]\to\Hol(\D)$ be continuous and $R\in {\mathcal L}(\Hol(\D))$. Then 
\[R\int_0^{2\pi} g(t)dt =\int_0^{2\pi} Rg(t)dt.\]
\end{lem}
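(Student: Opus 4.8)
The plan is to unwind the definition (\ref{eq:6.3}) of the Riemann integral as a limit of Riemann sums and to exploit that $R$ is linear and continuous. First I would observe that the right-hand side $\int_0^{2\pi} Rg(t)\,dt$ is well defined: since $R\in{\mathcal L}(\Hol(\D))$ is continuous and $g:[0,2\pi]\to\Hol(\D)$ is continuous, the map $t\mapsto Rg(t)$ is continuous from $[0,2\pi]$ to $\Hol(\D)$, so its Riemann integral is defined exactly as in (\ref{eq:6.3}).

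Next, fix a sequence $(\Pi_n)_n$ of partitions of $[0,2\pi]$ with $\delta_{\Pi_n}\to 0$, say $\Pi_n=\{0=t_0^{(n)}<\cdots<t_{m_n}^{(n)}=2\pi\}$. For each $n$ the Riemann sum $S(\Pi_n,g)=\sum_{i=1}^{m_n} g(t_{i-1}^{(n)})(t_i^{(n)}-t_{i-1}^{(n)})$ is a finite linear combination of elements of $\Hol(\D)$, so by linearity of $R$,
\[ R\,S(\Pi_n,g)=\sum_{i=1}^{m_n} Rg(t_{i-1}^{(n)})\,(t_i^{(n)}-t_{i-1}^{(n)})=S(\Pi_n,Rg). \]

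Finally I would pass to the limit $n\to\infty$. By (\ref{eq:6.3}), $S(\Pi_n,g)\to\int_0^{2\pi} g(t)\,dt$ in $\Hol(\D)$, and since $R$ is continuous, $R\,S(\Pi_n,g)\to R\int_0^{2\pi} g(t)\,dt$. On the other hand, applying (\ref{eq:6.3}) to the continuous map $Rg$ gives $S(\Pi_n,Rg)\to\int_0^{2\pi} Rg(t)\,dt$. As the two sequences agree for every $n$, their limits agree, which is the claimed identity. There is essentially no obstacle; the only point deserving a word is the continuity of $t\mapsto Rg(t)$, which is needed so that the integral on the right is defined, and which is immediate from the continuity of $R$.
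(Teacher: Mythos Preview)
Your proof is correct and follows essentially the same approach as the paper's: both rely on the identity $R\,S(\Pi,g)=S(\Pi,Rg)$ for Riemann sums and then pass to the limit via (\ref{eq:6.3}). Your version is simply more explicit, spelling out the continuity of $t\mapsto Rg(t)$ and the use of continuity of $R$ in the limit step.
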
 
\begin{proof}
One has $RS(\Pi,g)=S(\Pi,Rg)$. The result follows from (\ref{eq:6.3}).
\end{proof}	
The same argument shows that 
\begin{equation}\label{eq:6.4}
\int_0^{2\pi} g(t)dt (z)=\int_0^{2\pi} g(t)(z)dt \,\,\, \mbox{ for all }z\in\D.
\end{equation}
\begin{proof}[Proof of Theorem~\ref{th:6.3}]
There exists $r_0<r_1$ such that $\lambda\in\rho_W(T)$ whenever $r_0\leq |\lambda|\leq r_1$. Define $P\in {\mathcal L}(\Hol(\D))$ by 
\[    (Pf)(z)=\frac{1}{2i\pi}\int_{|\lambda|=r_0} (R_\lambda f)(z)d\lambda     \]
for all $z\in\D$, $f\in\Hol(\D)$.  We show that $P^2=P$. Note that 
\[  Pf (z)=\frac{1}{2i\pi}\int_{  |\mu|=r_1} (R_\mu f)(z)d\mu  \]
by Cauchy's theorem. Moreover, using the resolvent identity, Cauchy's integral formula and Lemma~\ref{lem:6.6}, we obtain  
\begin{eqnarray*}
P(Pf)(z)& = & \frac{1}{2i\pi}\int_{|\lambda|=r_0}(R_\lambda (Pf))(z)d\lambda\\
  &  = &  \frac{1}{2i\pi}\int_{|\lambda|=r_0} \frac{1}{2i\pi}\int_{|\mu|=r_1}(R_\lambda R_\mu f)(z)d\mu d\lambda\\
   & = & \frac{1}{2i\pi}\int_{|\lambda|=r_0} \frac{1}{2i\pi}\int_{|\mu|=r_1}\frac{ (R_\lambda f)(z)}{\mu-\lambda}d\mu d\lambda \\ 
   &  & - \frac{1}{2i\pi}\int_{|\mu|=r_1} \frac{1}{2i\pi}\int_{|\lambda|=r_0}\frac{ (R_\mu f)(z)}{\mu-\lambda} d\lambda d\mu\\
    & = &  \frac{1}{2i\pi}\int_{|\lambda|=r_0} (R_\lambda f)(z)d\lambda -0=(Pf)(z).
\end{eqnarray*}	
It follows from Lemma~\ref{lem:6.6} that $TP=PT$. Let $X_2=\ker P$, and $T_2=T_{|X_2}$. We show that 
$\sigma_W(T_2)\subset \{   \lambda\in\C :|\lambda|>r_0\}$. In fact, let $|\mu|<r_0$. Define 
\[    S_\mu:\Hol(\D)\to\Hol(\D)\mbox{ by }S_\mu f:=\frac{1}{2i\pi}\int_{|\lambda|=r_0} \frac{1}{\mu-\lambda}R_\lambda f d\lambda  . \]
Since $(\mu\Id -T)R_\lambda =(\mu-\lambda)R_\lambda +\Id$, 
\begin{eqnarray*}
(\mu\Id -T)Sf & = & \frac{1}{2i\pi}\int_{|\lambda|=r_0}R_\lambda f d\lambda + \frac{1}{2i\pi}\int_{|\lambda|=r_0}\frac{1}{\mu-\lambda} d\lambda f\\
 & = & Pf+f.
\end{eqnarray*} 
Thus, for $f\in X_2=\ker P$, $(\mu\Id-T)S_\mu f=f=S_\mu (\mu\Id -T)f$. 
This proves the claim. 

Let $X_1=P\Hol(\D)$, $T_1=T_{|X_1}$. We show that 
\[\sigma_W(T_1)\subset \{  \lambda \in\C:|\lambda |<r_0\}.\] 
In fact, let $|\mu|>r_0$. Define $\tilde{S_\mu}:\Hol(\D)\to\Hol(\D)$ by 
\[  \tilde{S_\mu} f=   \frac{1}{2i\pi}\int_{|\lambda|=r_0}\frac{1}{\lambda-\mu}R_\lambda f d\lambda .\] 
For $f \in X_1$,
\[f=Pf=\frac{1}{2i\pi}\int_{|\lambda'|=r_1} R_{\lambda'} f d\lambda'\in X_1.\]
Then 
\begin{eqnarray*}
(\mu\Id -T)	\tilde{S_\mu}f & = & (\mu\Id -T)	\tilde{S_\mu}Pf\\
& = & \frac{1}{2i\pi} \int_{|\lambda|=r_0} -R_\lambda f d\lambda + 0 \\
& = & \frac{1}{2i\pi} \int_{|\lambda|=r_0} -R_\lambda (Pf) d\lambda\\
 & = & \frac{1}{2i\pi}\int_{|\lambda|=r_0}\frac{1}{2i\pi}\int_{|\lambda'|=r_1}\frac{R_\lambda f}{\lambda'-\lambda}d\lambda' d\lambda\\
  & & - \frac{1}{2i\pi}\int_{|\lambda'|=r_1}\frac{1}{2i\pi}\int_{|\lambda|=r_0}\frac{R_\lambda' f}{\lambda'-\lambda}d\lambda d\lambda'\\
  & = &  \frac{1}{2i\pi}\int_{|\lambda|=r_0} R_\lambda fd\lambda -0=Pf=f.
\end{eqnarray*}
Since $(\mu\Id -T)\tilde{S_\mu}= 	\tilde{S_\mu}(\mu\Id -T)$, it follows that  $\tilde{S_\mu}=(\mu\Id -T)^{-1}$. We have shown 
that 
\[ \{ \mu\in\C : |\mu|>r_0\}\subset \rho_W(T_1)  \mbox{ and }  (\mu\Id -T_1)^{-1}=\tilde{S_\mu}_{|X_1}.   \]
\end{proof}	
\section{The Waelbroeck spectrum for composition operators induced by aperiodic rotations on $\Hol(\D)$}\label{sec:7}    
 Throughout this section we let $\beta\in\C$, $|\beta|=1$, such that $\beta^n\neq 1$ for all $n\in\N$. We consider $m\in\Hol(\D)$, $m\neq 0$, and define  $T\in{\mathcal L}(\Hol(D))$ by 
 \[   (Tf)(z)= m(z) f(\beta z)\mbox { for all }z\in\D .\]
 We already know from Theorem~\ref{th:2.1} that 
 \[   \sigma(T) = \beta \sigma(T)  \cup \{  m(0)\}.\]
 In particular $\beta^n m(0)\in\sigma(T)$ for all $n\in\N_0$.   
 
 Define for $0<r<1$ 
 \begin{equation}\label{eq:7.1}
 M_r:=\exp \left( \frac{1}{2\pi}\int_0^{2\pi}\log |m(re^{it})|dt\right).
 \end{equation}
 Then by Jensen's formula \cite[15.18]{Ru74}
 \begin{equation}\label{eq:7.2}
 M_r=|m_1(0)|r^N\prod_{k=1}^{m}\frac{r}{|\alpha_k|},
 \end{equation}
 where $\alpha_1,\cdots, \alpha_m$ are the zeros of $m$ in the closed unit disc $\overline{D}(0,r)\setminus \{0\}$ and where 
 $m(z)=z^N m_1(z)$ with $N\in\N_0$, $m_1\in\Hol(\D)$ such that $m_1(0)\neq 0$. In particular 
 \[  M_r=|m_1(0)|r^N\mbox{ if }m(z)\neq 0\mbox{ on }\overline{D}(0,r)\setminus \{0\}.   \] 
 It follows from (\ref{eq:7.2}) that 
 \begin{equation}\label{eq:7.3}
 M_r\leq M_s\mbox{ for }0<r<s<1.
 \end{equation}
 We let 
 \begin{equation}\label{eq:7.4}
 M_1:=\sup_{r<1} M_r \in [0,\infty].
 \end{equation}
 It follows from (\ref{eq:7.2}) that $M_1<\infty$ if and only if $\sum_k(1-|\alpha_k|)<\infty$, where now the $\alpha_k$ are the zeros of $m$ in the open unit disc $\D$ counted with multiplicities. Thus, by Weierstrass' theorem \cite[15.11]{Ru74}, there exists 
 $m\in\Hol(\D)$ such that $M_1=\infty$. However, if $m\in H^\infty(\D)$, or more generally, if $m$ is in the Nevanlinna class, then $M_1<\infty$. If $m\in H^\infty(\D)$, then 
 \begin{equation}\label{eq:7.5}
 M_1\leq M^*
 \mbox{ where }M^*:=\exp\left( \frac{1}{2\pi}\int_0^{2\pi} \log |m^*(e^{it})|dt\right) . 
 \end{equation}  
 Consider the unique (up to a constant of modulus one) factorization $m=BSF$ where $B$ is the Blaschke product associated with the zeros of $m$, $S$ is the singular inner part of $m$ and $F$ is the outer factor of $m$. Then 
 \[M_1=M^* \mbox{ if and only if }S=1,\]
 see \cite[p. 67 and p.68]{Ho62}.  At the end of this section we will give a concrete function $m$ such that in (\ref{eq:7.5}) merely the strict inequality holds. 
 
 For $0<r<1$ we let $A(r\D):={\mathcal C}(r\overline{D})\cap \Hol(r\D)$. Then 
 \[   (\tilde{T}_r  f)(z)=m(z)f(\beta z)   \]
 defines an operator $\tilde{T}_r\in {\mathcal L} (A(r\D))$. 
 \begin{prop}\label{prop:7.1}
 \begin{itemize}
 	\item[a)] $r_\sigma (\tilde{T}_r)=M_r$.
 	\item[b)] If $m(z)\neq 0$ for all $z\in r\overline{\D}$, then 
$ \sigma(\tilde{T}_r)=\{ \lambda\in\C :|\lambda|=|m(0)|  \}.  $
 	\item[c)] If $m(z_0)=0$ for some $z_0\in r\overline{\D}$, then 
	$  \sigma (\tilde{T}_r)=\{  \lambda\in\C : |\lambda|\leq M_r\}.  $
 \end{itemize}
 \end{prop}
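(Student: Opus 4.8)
The plan is to reduce Proposition~\ref{prop:7.1} to the Banach space results of Section~\ref{sec:5} by a dilation argument.

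First I would introduce the dilation operator $V_r:A(r\D)\to A(\D)$ defined by $(V_r f)(w)=f(rw)$ for $|w|\le 1$. Since $f\mapsto f(r\,\cdot)$ sends a function continuous on $r\overline{\D}$ and holomorphic on $r\D$ to a function continuous on $\overline{\D}$ and holomorphic on $\D$, with inverse $(V_r^{-1}g)(z)=g(z/r)$, the map $V_r$ is a surjective linear isometry, hence an isometric isomorphism of Banach spaces. A direct computation gives
\[ (V_r\,\tilde{T}_r\,V_r^{-1}g)(w)=m(rw)\,g(\beta w)\qquad (|w|\le 1),\]
so $V_r\,\tilde{T}_r\,V_r^{-1}$ is the weighted composition operator $T_{A(\D)}$ on $X=A(\D)$ induced by the rotation $\beta$ and by the weight $m_r(w):=m(rw)$. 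Since $m\in\Hol(\D)$, the function $m_r$ is holomorphic on $\{|w|<1/r\}\supset\overline{\D}$, so $m_r\in A(\D)$; consequently all hypotheses of Section~\ref{sec:5} are satisfied: $A(\D)$ fulfils (\ref{eq:5.1})--(\ref{eq:5.3}), $m_r$ fulfils (\ref{eq:5.4}), and $\beta$ is aperiodic by the standing assumption of this section. Because similarity preserves both the spectrum and the spectral radius, it suffices to establish a), b), c) for $T_{A(\D)}$ with the weight $m_r$, keeping in mind that $m_r(0)=m(0)$.

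For a), Lemma~\ref{lem:5.5}a) applied to the weight $m_r$ gives
\[ r_\sigma(\tilde{T}_r)=r_\sigma(T_{A(\D)})=\exp\left(\frac{1}{2\pi}\int_0^{2\pi}\log|m_r(e^{it})|\,dt\right)=\exp\left(\frac{1}{2\pi}\int_0^{2\pi}\log|m(re^{it})|\,dt\right)=M_r,\]
by the definition (\ref{eq:7.1}) of $M_r$. For b), if $m$ has no zero in $r\overline{\D}$ then $m_r$ has no zero in $\overline{\D}$, so Proposition~\ref{prop:5.7} together with (\ref{eq:5.10}) and $|m_r(0)|=|m(0)|$ yields $\sigma(\tilde{T}_r)=\{\lambda\in\C:|\lambda|=|m(0)|\}$. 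For c), if $m(z_0)=0$ with $|z_0|\le r$, then $z_0/r\in\overline{\D}$ is a zero of $m_r$, so Corollary~\ref{cor:5.4} gives $\sigma(\tilde{T}_r)=\{\lambda\in\C:|\lambda|\le r_\sigma(\tilde{T}_r)\}=\{\lambda\in\C:|\lambda|\le M_r\}$ by part a).

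I expect no genuine obstacle here: the only point demanding care is checking that $V_r$ is an isometric isomorphism of $A(r\D)$ onto $A(\D)$ conjugating $\tilde{T}_r$ to the announced operator; once this is verified, Proposition~\ref{prop:7.1} is a direct transcription of Lemma~\ref{lem:5.5}a), Proposition~\ref{prop:5.7}, and Corollary~\ref{cor:5.4}.
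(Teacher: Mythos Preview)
Your proposal is correct and follows essentially the same route as the paper: the paper also conjugates $\tilde{T}_r$ by the dilation $\Phi f(z)=f(rz)$ to obtain the weighted composition operator on $A(\D)$ with weight $m_r(z)=m(rz)$, and then invokes the Section~\ref{sec:5} results. Your citations are in fact slightly sharper than the paper's (you use Corollary~\ref{cor:5.4} rather than Theorem~\ref{th:5.3} for c), which correctly handles the case $|z_0|=r$, and you make explicit the appeal to Lemma~\ref{lem:5.5}a) for a)).
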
 
 \begin{proof}
 Define $\Phi:A(r\D)\to A(\D)$ by $(\Phi f)(z)=f(rz)$. Then $\Phi$ is bijective,  linear and $\Phi \tilde{T}_r \Phi^{-1}=T_r$ where 
 $T_r\in {\mathcal L}(A(\D))$ is given by 
 \[ (T_r f)(z)=m_r(z)f(\beta z),\,\forall z\in\D,  \]
 where we let $m_r(z)=m(rz)$, $(z\in\D)$. Now the claim follows from Theorem~\ref{th:5.3} and Proposition~\ref{prop:5.7}. 	
 \end{proof}	
 We first establish a spectral inclusion. Recall that the Waelbroeck spectrum $\sigma_W(T)$ is closed and $\sigma(T)\subset \sigma_W(T)$. 
 \begin{prop}\label{prop:7.2}  
 \begin{itemize}
 	\item[a)] If $|\lambda|>M_r$ for all $r<1$, then $\lambda\in\rho(T)$. 
 	\item[b)] $\sigma(T)\subset \{  \lambda\in\C : |\lambda|<M_1\}$ if $m$ has a zero in $\D$.
 	\item[c)] $\sigma_W(T)\subset \{  \lambda\in\C :|\lambda|\leq M_1\}$ if $M_1<\infty$.  
 \end{itemize}
 \end{prop}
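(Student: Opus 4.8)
The plan is to reduce everything to the Banach spaces $A(r\D)$, $0<r<1$, on which the bounded operators $\tilde{T}_r$ act, using the spectral radius identity $r_\sigma(\tilde{T}_r)=M_r$ from Proposition~\ref{prop:7.1} together with the fact that $\Hol(\D)$ is the inverse limit of the spaces $A(r\D)$ under the restriction maps. For a), fix $\lambda$ with $|\lambda|>M_r$ for all $r<1$; then $\lambda\in\rho(\tilde{T}_r)$ for every $r$. Given $g\in\Hol(\D)$, put $f_r:=(\lambda\Id-\tilde{T}_r)^{-1}(g|_{r\overline{\D}})\in A(r\D)$. If $r<s<1$, restricting the defining identity $\lambda f_s(z)-m(z)f_s(\beta z)=g(z)$ (which holds on $s\overline{\D}$ since $|\beta z|=|z|$) to $|z|\le r$ shows that $f_s|_{r\overline{\D}}$ solves the same equation as $f_r$ in $A(r\D)$, so injectivity of $\lambda\Id-\tilde{T}_r$ gives $f_s|_{r\overline{\D}}=f_r$. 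Hence the $f_r$ patch together to a function $f\in\Hol(\D)$ with $(\lambda\Id-T)f=g$, proving surjectivity of $\lambda\Id-T$; injectivity is immediate, since any $f$ in its kernel restricts into $\ker(\lambda\Id-\tilde{T}_r)=\{0\}$ for every $r$. Thus $\lambda\in\rho(T)$, the continuity of $R_\lambda$ being automatic by the closed graph theorem.

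For b) we may assume $M_1<\infty$. By a) it suffices to know that a zero $\alpha\in\D$ of $m$ forces $M_r<M_1$ for every $r<1$, since then $|\lambda|\ge M_1>M_r$ for all $r$ yields $\lambda\in\rho(T)$, whence $\sigma(T)\subset\{|\lambda|<M_1\}$. The strict inequality follows from Jensen's formula (\ref{eq:7.2}): on $(|\alpha|,1)$ the factor $r/|\alpha|$ is present and strictly increasing while the remaining factors are non-decreasing, so $r\mapsto M_r$ is strictly increasing there, and for any $r<1$ one picks $r'$ with $\max(r,|\alpha|)<r'<1$ to obtain $M_r\le M_{\max(r,|\alpha|)}<M_{r'}\le M_1$.

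For c), assume $M_1<\infty$ and let $|\lambda|>M_1$; I must exhibit $\delta>0$ with $\overline{D}(\lambda,\delta)\subset\rho(T)$ and $\sup_{|\mu-\lambda|\le\delta}\|R_\mu f\|_{A(r\D)}<\infty$ for all $f\in\Hol(\D)$, $r<1$. Take any $\delta\in(0,|\lambda|-M_1)$; then $|\mu-\lambda|\le\delta$ forces $|\mu|>M_1\ge M_r$ for all $r$, so $\mu\in\rho(T)$ by a). Fix $r<1$ and $f\in\Hol(\D)$, and choose $s\in(r,1)$. Exactly as in a) one has $(R_\mu f)|_{s\overline{\D}}=(\mu\Id-\tilde{T}_s)^{-1}(f|_{s\overline{\D}})$ for $\mu\in\overline{D}(\lambda,\delta)$, and since $\overline{D}(\lambda,\delta)$ is a compact subset of $\{|\mu|>M_1\ge M_s=r_\sigma(\tilde{T}_s)\}\subset\rho(\tilde{T}_s)$, norm‑continuity of the resolvent of the bounded operator $\tilde{T}_s$ provides $C<\infty$ with $\|(\mu\Id-\tilde{T}_s)^{-1}\|_{{\mathcal L}(A(s\D))}\le C$ on $\overline{D}(\lambda,\delta)$. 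Hence
\[
\|R_\mu f\|_{A(r\D)}\le\|R_\mu f\|_{A(s\D)}\le C\,\|f\|_{A(s\D)}\qquad(|\mu-\lambda|\le\delta),
\]
which is the desired uniform bound, so $\lambda\in\rho_W(T)$.

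The substantive input is entirely Proposition~\ref{prop:7.1}; the two points that need care are the compatibility of the local inverses $(\lambda\Id-\tilde{T}_r)^{-1}$ under restriction in a) (so that they genuinely glue to an element of $\Hol(\D)$), and in b) the passage from $|\lambda|\ge M_1$ to the strict estimates $|\lambda|>M_r$, which is precisely where the hypothesis that $m$ vanishes somewhere in $\D$ enters, via Jensen's formula.
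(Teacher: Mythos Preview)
Your proof is correct and follows essentially the same approach as the paper: reduce to the Banach spaces $A(r\D)$ via Proposition~\ref{prop:7.1}, glue the local resolvents in a), use Jensen's formula to get $M_r<M_1$ in b), and invoke norm-continuity of the Banach-space resolvent for the uniform bound in c). One small wording issue in your argument for b): when the zero of $m$ is at the origin, there is no factor $r/|\alpha|$ in (\ref{eq:7.2}); the strict monotonicity comes instead from the prefactor $r^N$ with $N\ge 1$, so you should treat that case separately (the conclusion is the same).
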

 \begin{proof}
 a) Let $|\lambda|>M_r$ for all $0<r<1$. Let $g\in\Hol(\D)$. Given $0<r<1$, by Proposition~\ref{eq:7.1} a), there exists a unique $f_r\in A(r\D)$ satisfying 
 \[  \lambda f_r(z)-f_r(\beta z)=g(z) \mbox{ for all }z\in r\D .\]
 Thus, for $r<r'<1$, ${f_{r'}}_{|r\D}=f_r$. Defining $f(z):=f_r(z)$ if $|z|\leq r$ we obtain a function $f\in\Hol(\D)$ such that 
 \[  \lambda f(z)-m(z)f(\beta z) =g(z) \]
 for all $z\in\D$; i.e. $\lambda f-Tf=g$. Since $f_{|r\overline{\D}} \in A(r\D)$, uniqueness follows from the fact that $\lambda\Id -\tilde{T}_r$ is injective.  This proves a). \\
 b) If $m$ has a zero in $\D$, then, by (\ref{eq:7.2}), $M_r<M_1$ for all $r<1$. So the claim follows from a).\\
 c) Let $|\lambda_0|>M_1$. Let $\delta>0$ such that $\overline{D}(\lambda_0,\delta)\subset \{\lambda\in\C:  |\lambda|>M_1\}$. 
 Let $0<r<1$, $f\in\Hol(\D)$. We have to show that 
 \[     \sup_{|\lambda-\lambda_0|\leq \delta}\sup_{|z|\leq r}  |(R_\lambda f)(z)|<\infty .   \]   
 But 
 \[(R_\lambda f)(z)=((\lambda \Id -\tilde{T}_r )^{-1} f_{|A(r\D)})(z)\mbox{ for }|z|\leq r.\]  	
 Since $r_\sigma (\tilde{T}_r)\leq M_r<M_1$ the claim follows. 
 \end{proof}	
\begin{thm}\label{th:7.3}
Assume that $m(z)\neq 0$ for all $z\in\D$. Then 
\begin{itemize}
	\item[a)] $\{\beta^n m(0):n\in\N_0\}\subset \sigma(T)\subset \{  \lambda\in\C :|\lambda|=|m(0)| \}$.
	\item[b)] $\sigma_W(T)=\{ \lambda\in\C :|\lambda|=|m(0)| \}$.  
\end{itemize}
\end{thm}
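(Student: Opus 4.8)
The plan is to reduce everything to the Banach-space operators $\tilde{T}_r\in{\mathcal L}(A(r\D))$, exploiting the crucial feature recorded in Proposition~\ref{prop:7.1} b): since $m$ has no zero in $\D$, it has none in $r\overline{\D}$ for any $r<1$, so $\sigma(\tilde{T}_r)=\{\lambda\in\C:|\lambda|=|m(0)|\}$ is \emph{the same circle for every} $r<1$. Note first that $m(0)\neq0$ (because $0\in\D$) and, by (\ref{eq:7.2}), $M_r=|m(0)|$ for all $r<1$, hence $M_1=|m(0)|<\infty$. The inclusion $\{\beta^n m(0):n\in\N_0\}\subset\sigma(T)$ in a) is exactly Corollary~\ref{cor:2.2} b), so only the upper inclusion requires work.

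For that upper inclusion, let $|\lambda|\neq|m(0)|$. By Proposition~\ref{prop:7.1} b), $\lambda\in\rho(\tilde{T}_r)$ for every $r<1$, so I would rerun the gluing argument from the proof of Proposition~\ref{prop:7.2} a) \emph{verbatim} (with Proposition~\ref{prop:7.1} b) replacing Proposition~\ref{prop:7.1} a)): given $g\in\Hol(\D)$, for each $r<1$ set $f_r:=(\lambda\Id-\tilde{T}_r)^{-1}g_{|A(r\D)}\in A(r\D)$; by uniqueness the restriction of $f_{r'}$ to $r\overline{\D}$ equals $f_r$ whenever $r<r'<1$, so the $f_r$ glue to an $f\in\Hol(\D)$ satisfying $(\lambda\Id-T)f=g$, and injectivity of each $\lambda\Id-\tilde{T}_r$ gives injectivity of $\lambda\Id-T$. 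Hence $\lambda\in\rho(T)$, which yields $\sigma(T)\subset\{|\lambda|=|m(0)|\}$ and proves a).

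For b), the inclusion $\sigma_W(T)\subset\{\lambda:|\lambda|\leq|m(0)|\}$ is Proposition~\ref{prop:7.2} c) applied with $M_1=|m(0)|<\infty$. Conversely, a) gives $\{\beta^n m(0):n\in\N_0\}\subset\sigma(T)\subset\sigma_W(T)$; since $\sigma_W(T)$ is closed and $\{\beta^n:n\in\N_0\}$ is dense in $\T$ ($\beta$ being aperiodic), we get $\{|\lambda|=|m(0)|\}\subset\sigma_W(T)$. It remains to show $\{|\lambda|<|m(0)|\}\subset\rho_W(T)$. Fix $\lambda_0$ with $|\lambda_0|<|m(0)|$ and set $\delta:=\tfrac{1}{2}(|m(0)|-|\lambda_0|)>0$; then $\overline{D}(\lambda_0,\delta)\subset\{|\mu|<|m(0)|\}$, which by a) is contained in $\rho(T)$ and, by Proposition~\ref{prop:7.1} b), in $\rho(\tilde{T}_r)$ for every $r<1$. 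For $\mu\in\rho(T)$ and $|z|\leq r$ one has $(R_\mu f)(z)=\big((\mu\Id-\tilde{T}_r)^{-1}f_{|A(r\D)}\big)(z)$ (both sides solve $\mu u-\tilde{T}_r u=f_{|A(r\D)}$ in $A(r\D)$, and $\mu\Id-\tilde{T}_r$ is injective), while $\mu\mapsto(\mu\Id-\tilde{T}_r)^{-1}\in{\mathcal L}(A(r\D))$ is holomorphic, hence bounded on the compact set $\overline{D}(\lambda_0,\delta)\subset\rho(\tilde{T}_r)$. Consequently
\[\sup_{|\mu-\lambda_0|\leq\delta}\|R_\mu f\|_{A(r\D)}\;\leq\;\Big(\sup_{|\mu-\lambda_0|\leq\delta}\|(\mu\Id-\tilde{T}_r)^{-1}\|_{{\mathcal L}(A(r\D))}\Big)\|f\|_{A(r\D)}\;<\;\infty\]
for every $f\in\Hol(\D)$ and every $r<1$, so $\lambda_0\in\rho_W(T)$; combining the two inclusions gives b).

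The only delicate point is this last estimate: the definition of $\rho_W(T)$ requires a \emph{single} $\delta$ valid simultaneously for all radii $r<1$ (the constant in the bound may depend on $r$, which is harmless). This is exactly what Proposition~\ref{prop:7.1} b) provides, since the resolvent set $\C\setminus\{|\mu|=|m(0)|\}$ of $\tilde{T}_r$ does not depend on $r$; all the rest is routine passage between $\Hol(\D)$ and the Banach spaces $A(r\D)$.
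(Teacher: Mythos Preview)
Your proof is correct, but it follows a different route from the paper's. The paper first observes (as you do) that $M_r=|m(0)|$ for all $r<1$, so Proposition~\ref{prop:7.2} c) gives $\sigma_W(T)\subset\{|\lambda|\le|m(0)|\}$. It then exploits that, since $m$ is zero-free, $T$ is \emph{invertible} with $(T^{-1}f)(z)=\frac{1}{m(\overline{\beta}z)}f(\overline{\beta}z)$; applying the same upper bound to $T^{-1}$ (whose weight $1/m(\overline{\beta}\cdot)$ is again zero-free, with value $1/m(0)$ at $0$) and the identity $\sigma_W(T^{-1})=\sigma_W(T)^{-1}$ yields $|\mu|\ge|m(0)|$ for every $\mu\in\sigma_W(T)$. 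The second inclusion in a) then follows from b) via $\sigma(T)\subset\sigma_W(T)$.

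By contrast, you bypass the inverse entirely and argue directly from Proposition~\ref{prop:7.1} b): because $\sigma(\tilde{T}_r)$ is the \emph{same} circle $\{|\lambda|=|m(0)|\}$ for every $r<1$, the gluing argument of Proposition~\ref{prop:7.2} a) works on both sides of the circle (giving a) outright), and the Banach-space resolvents $(\mu\Id-\tilde{T}_r)^{-1}$ are uniformly bounded on any compact disc inside the circle, yielding $\{|\lambda|<|m(0)|\}\subset\rho_W(T)$. The paper's approach is shorter and highlights the symmetry $T\leftrightarrow T^{-1}$; yours is more hands-on and avoids having to check that $\sigma_W(T^{-1})=\sigma_W(T)^{-1}$ in the Fr\'echet setting, relying only on standard Banach-space resolvent continuity.
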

 \begin{proof}
 The first inclusion is Corollary~\ref{cor:2.2}. Since $m(z)\neq 0$ for all $z\in\D$, $M_r=|m(0)|$ for all	$0<r<1$, see (\ref{eq:7.2}). Hence $M_1=|m(0)|$ and Proposition~\ref{prop:7.2} c) shows that $\sigma_W(T)\subset \{ \lambda\in\C :|\lambda|\leq |m(0)| \}$. But $T$ is invertible and 
 \[(T^{-1}f)(z)=\frac{1}{m(\overline{\beta} z)}f(\overline{\beta} z).\]  
 From the identity $(\mu\Id -T^{-1})=\frac{T}{\mu}(T-\frac{1}{\mu})^{-1}$ one sees that $\sigma_W(T^{-1})=\sigma_W (T)^{-1}$. It follows from the first part of the proof applied to $T^{-1}$ that 
 $\frac{1}{|\mu|}\leq \frac{1}{|m(0)|}$ for all $\mu\in \sigma_W(T)$. Hence $|\mu|\geq |m(0)|\geq |\mu|$ for all $\mu\in\sigma_W(T)$. This proves that 
 \[   \sigma_W(T) \subset \{   \lambda\in\C : |\lambda|=|m(0)|  \}, \]
 and the first part of a) implies the other inclusion since $\sigma_W(T)$ is closed. Thus b) is proved. The second inclusion in a) follows from b) since $\sigma(T)\subset \sigma_W(T)$.   
 \end{proof}
Next we want to determine the Waelbroeck spectrum of $T$ when $m$ has zeros in $\D$. 

We first prove rotational invariance of the Waelbroeck spectrum. 
\begin{lem}\label{lem:7.4}
\begin{itemize}
\item[a)] Let $\lambda\in\rho_W(T)$. Then $\beta\lambda\in\rho_W(T)$. 
\item[b)] If $\lambda\in\sigma_W(T)$, then $\T \lambda\subset\sigma_W(T)$. 
\item[c)] If $\lambda\in\rho_W(T)$, then $\T \lambda \subset \rho_W(T)$.  
\end{itemize}	
\end{lem}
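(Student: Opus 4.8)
The plan is to reduce everything to part a), since b) and c) follow formally: iterating a) gives $\beta^k\lambda\in\rho_W(T)$ for all $k\in\N_0$, and since $\{\beta^k:k\in\N_0\}$ is dense in $\T$ (because $\beta$ is aperiodic) and $\rho_W(T)$ is open, we get $\overline{\{\beta^k\lambda:k\in\N_0\}}=\T\lambda\subset\overline{\rho_W(T)}$; but we actually need $\T\lambda\subset\rho_W(T)$, so the argument must be that $\rho_W(T)$ being open and containing a dense subset of the circle $\T|\lambda|$ is not by itself enough — instead I would argue that once $\beta^k\lambda\in\rho_W(T)$, by openness there is a fixed $\delta>0$ (uniform in a neighborhood, using Lemma~\ref{lem:6.1} on a compact arc) so that the whole arc around each $\beta^k\lambda$ lies in $\rho_W(T)$, and these arcs cover $\T\lambda$. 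Concretely: fix $\lambda$, let $r_0=|\lambda|$; I first show the set $\{\mu\in\T\lambda:\mu\in\rho_W(T)\}$ is both open and closed in $\T\lambda$ and nonempty, hence all of $\T\lambda$. That gives c), and then b) is just the contrapositive of c).

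So the real content is part a): if $\lambda\in\rho_W(T)$ then $\beta\lambda\in\rho_W(T)$. Here I would follow the Banach-space template from Lemma~\ref{lem:5.1} and Proposition~\ref{prop:5.2}, transcribed to $\Hol(\D)$. First, $\lambda\in\rho_W(T)$ forces $\lambda\ne m(0)$ (since $m(0)\in\sigma(T)\subset\sigma_W(T)$ by Theorem~\ref{th:7.3} or Corollary~\ref{cor:2.2}), and also $\lambda\ne 0$ when $m$ has a zero; I should check $\beta\lambda\ne m(0)$, which may fail only if $\lambda=\overline\beta m(0)$, a single point, handled separately or absorbed since $\overline\beta m(0)\in\sigma(T)$ already when... actually by Corollary~\ref{cor:2.2} only $\beta^n m(0)$ are known to be in the spectrum, so I need the genuine algebraic identity. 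The explicit formulas (\ref{eq:5.5}) and (\ref{eq:5.6}) from Lemma~\ref{lem:5.1}(b) express $R_{\beta\lambda}$ in terms of $R_\lambda$: for $g(z)=zg_1(z)$, $R_{\beta\lambda}g=e_1R_\lambda(\overline\beta g_1)$, and $R_{\beta\lambda}e_0=\frac{1}{\beta\lambda-m(0)}e_0+R_{\beta\lambda}\bigl(\frac{m-m(0)}{\beta\lambda-m(0)}\bigr)$. These show $\beta\lambda\in\rho(T)$; what remains is the \emph{Waelbroeck} estimate, i.e. local boundedness of $\mu\mapsto R_\mu f$ on a small disc around $\beta\lambda$.

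For that I would use the same formulas uniformly in $\mu$ near $\beta\lambda$: for $\mu$ in a small disc $D(\beta\lambda,\delta')$, write $\mu=\beta\nu$ with $\nu$ in a small disc around $\lambda$ contained in $\rho_W(T)$ (shrinking $\delta$ from the definition of $\rho_W(T)$ at $\lambda$), and then $R_\mu g = e_1 R_\nu(\overline\beta g_1)$ for $g=e_1 g_1$, together with the $e_0$ formula. Multiplication by $e_1$ and the passage $g\mapsto g_1=g/e_1$ are bounded operations on each seminorm $\|\cdot\|_{A(r\D)}$ (with a harmless loss of radius, controlled e.g. by Cauchy estimates), and $U_{\overline\beta}:g_1\mapsto g_1(\overline\beta\,\cdot)$ is an isometry for $\|\cdot\|_{A(r\D)}$. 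Hence
\[
\sup_{|\mu-\beta\lambda|\le\delta'}\|R_\mu g\|_{A(r\D)}\le C\sup_{|\nu-\lambda|\le\delta}\|R_\nu(\overline\beta g_1)\|_{A(r\D)}<\infty
\]
by the hypothesis $\lambda\in\rho_W(T)$, plus the analogous bound for the $e_0$-component via (\ref{eq:5.6}); decomposing $\Hol(\D)=\Hol_0(\D)\oplus\C e_0$ finishes it. The main obstacle I anticipate is purely bookkeeping: tracking the radius loss in "$g\mapsto g/e_1$" (which needs $g(0)=0$, the reason for splitting off $e_0$) and making sure the constants are uniform in $\mu$, not just finite pointwise — but Lemma~\ref{lem:6.1} is exactly the tool that converts pointwise finiteness into the needed uniform seminorm estimate, so there is no genuine analytic difficulty beyond what is already in Section~\ref{sec:6}.
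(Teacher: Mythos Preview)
Your approach to a) is essentially the paper's: use the explicit formulas from Lemma~\ref{lem:5.1} together with the decomposition $\Hol(\D)=\Hol_0(\D)\oplus\C e_0$ to transfer the Waelbroeck bound from a $\delta$-neighbourhood of $\lambda$ to one of $\beta\lambda$. One point you gloss over: before you can write $R_{\beta\mu}$ you need $\beta\mu\in\rho(T)$, and Lemma~\ref{lem:5.1}(b) presupposes $\beta\mu\notin\sigma_p(T)$. The paper disposes of this with a preliminary observation: if $\nu\in\sigma(T)$ then by Corollary~\ref{cor:2.2} one has $\{\beta^n\nu:n\in\N_0\}\subset\sigma(T)\subset\sigma_W(T)$, and since $\sigma_W(T)$ is closed, $\T\nu\subset\sigma_W(T)$. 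Contrapositive: if some point of $\T\nu$ lies in $\rho_W(T)$, then $\nu\in\rho(T)$. Applied with $\nu=\beta\mu$ (and $\mu\in D(\lambda,\delta)\subset\rho_W(T)$ lying on the circle $\T\nu$), this gives $\beta\mu\in\rho(T)$ for all $\mu\in D(\lambda,\delta)$, and also dispatches your worry about $\beta\mu=m(0)$.

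For b) and c) you work harder than necessary. Your covering idea---iterate a) pointwise on $D(\lambda,\delta)\subset\rho_W(T)$ to get $D(\beta^k\lambda,\delta)=\beta^k D(\lambda,\delta)\subset\rho_W(T)$ for all $k$, and observe these discs of fixed radius $\delta$ centered at a dense subset of $\T\lambda$ cover $\T\lambda$---is valid and does not need Lemma~\ref{lem:6.1}. The clopen phrasing, however, is not: closedness of $\rho_W(T)\cap\T\lambda$ in $\T\lambda$ is not evident. The paper's route is shorter: derive b) first, by taking the contrapositive of a) (so $\mu\in\sigma_W(T)\Rightarrow\overline\beta\mu\in\sigma_W(T)$), iterating to get $\{\overline\beta^{\,k}\lambda:k\in\N_0\}\subset\sigma_W(T)$, and using that $\sigma_W(T)$ is \emph{closed}. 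Then c) is the contrapositive of b). Working on the spectrum side exploits closedness directly; your resolvent-side argument needs the extra uniform-radius covering step.
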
 	
\begin{proof}
1. By Corollary~\ref{cor:2.2} a) $\lambda\in\sigma(T)$ implies that 
\[  \{\beta^n\lambda:n\in\N\}\subset \sigma(T)\subset \sigma_W(T). \]
Since $\sigma_W(T)$ is closed, it follows that $\T \lambda \subset \sigma_W(T)$. \\
2. We prove a). To that aim, let $\lambda\in\rho_W(T)$, and we show that $\beta\lambda\in \rho_W(T)$.   \\
There exists $\delta>0$ such that   $D(\lambda, \delta)\subset \rho(T)$ and for all $f\in\Hol(\D)$, 
\begin{equation}\label{eq:7.6}
\sup_{|\lambda-\mu|< \delta}\sup_{|z|\leq r}|R_\mu f(z)|<\infty.
\end{equation}	
By 1. this implies that $\beta\mu\in\rho(T)$   for all $\mu\in D(\lambda,\delta )$. Note that $\beta D(\lambda,\delta)=D(\beta \lambda,\delta)$.  Thus it suffices to show that for all $r<1$, 
\begin{equation}\label{eq:7.7}
\sup_{|\lambda-\mu|< \delta}\sup_{|z|\leq r}|R_{\beta\mu} g(z)|<\infty.
\end{equation}
 for all $g\in\Hol(\D)$. Let $g\in \Hol(\D)$, $0<r<1$. \\
 \emph{First case}: $g(0)=0$. Then there exists $g_1\in\Hol(\D)$ such that $g=e_1g_1$. By (\ref{eq:5.5}) we have
 \begin{eqnarray*}
 \sup_{|\lambda-\mu|< \delta}\sup_{|z|\leq r} |R_{\beta \mu}g(z)| & =  & 	\sup_{|\lambda-\mu|< \delta}\sup_{|z|\leq r}  |e_1 R_\mu (\overline{\beta}g_1))(z)|\\
  & \leq & \sup_{|\lambda-\mu|< \delta} r|(R_\mu g_1)(z)|<\infty
\end{eqnarray*}	
 by (\ref{eq:7.6}).  \\
 \emph{Second case}: $g=e_0$. By (\ref{eq:5.6}),
 \[ \sup_{|\lambda-\mu|<\delta}\sup_{|z|\leq r} \left\vert (R_{\beta \mu} e_0) (z) \right\vert   =\]
 
 \[ \sup_{|\lambda-\mu|<\delta}\sup_{|z|\leq r} \left|   \frac{1}{\beta \mu -m(0)} + (R_{\beta\mu} (m-m(0))f(0))(z)\right|<\infty  \]

 by the first case. Since $\Hol(\D)=\Hol_0(\D)\oplus \C e_0$, (\ref{eq:7.7}) is proved.\\
 3. We prove b). Let $\lambda\in \sigma_W(T)$. Then by a) $\overline{\beta}\lambda \in \sigma_W(T)$. Hence 
 \[  \{  \overline{\beta}^k\lambda:k\in\N \}\subset \sigma_W(T).  \]
 This implies that $\lambda\T\subset \sigma_W(T)$. \\
 4. c) follows from b). 
 \end{proof}	
\begin{thm}\label{th:7.5}
Let $\beta\in\C$, $|\beta|=1$, $\beta^n\neq 1$ for all $n\in\N$. Let $m\in \Hol(\D)$ such that $m(z_0)=0$ for some $z_0\in\D$. Consider $T:\Hol(\D)\to \Hol(\D)$ given by 
\[  (Tf)(z)=m(z)f (\beta z)\,\,\, z\in\D . \]
Then $\sigma_W(T)=\overline{D}(0,M_1)$ if $M_1<\infty$ and $\sigma_W(T)=\C$ if $M_1=\infty$. 
\end{thm}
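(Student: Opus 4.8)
The plan is to prove the two inclusions separately. The inclusion $\sigma_W(T)\subseteq\overline{D}(0,M_1)$ when $M_1<\infty$ is exactly Proposition~\ref{prop:7.2}~c), and is vacuous when $M_1=\infty$; so the work lies in the reverse inclusion $\overline{D}(0,M_1)\subseteq\sigma_W(T)$ (with the convention $\overline{D}(0,\infty)=\C$). Since $T$ is not surjective (because $(Tf)(z_0)=0$ for every $f$, while $e_0(z_0)=1$), we have $0\in\sigma(T)\subseteq\sigma_W(T)$; and $\sigma_W(T)$ is closed and rotationally invariant by Lemma~\ref{lem:7.4}. Hence it suffices to show that $r_0\T\cap\sigma_W(T)\neq\emptyset$ for every $r_0\in(0,M_1)$: then $r_0\T\subseteq\sigma_W(T)$ for all such $r_0$, so $D(0,M_1)\subseteq\sigma_W(T)$, and closedness gives the claim.

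Fix $r_0\in(0,M_1)$ and suppose, for contradiction, that $r_0\T\subseteq\rho_W(T)$. The first step is to upgrade this to $\rho_W(T)\supseteq\{\mu\in\C:|\mu|\geq r_0\}$, and here the spectral decomposition of Section~\ref{sec:6} is the right tool: Theorem~\ref{th:6.3} applied at radius $r_0$ gives $T$-invariant closed subspaces $X_1,X_2$ with $\Hol(\D)=X_1\oplus X_2$ and, writing $T_j:=T_{|X_j}$, $\sigma_W(T_2)=\sigma_W(T)\cap\{|\mu|>r_0\}$; thus $0\in\rho_W(T_2)\subseteq\rho(T_2)$ and $T_2$ is invertible on $X_2$. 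Now the argument from the proof of Theorem~\ref{th:5.3} applies verbatim: for $0\neq f\in X_2$ and each $n$ write $f=T_2^ng_n$ with $g_n\in X_2$, so that $f(z)=m(z)m(\beta z)\cdots m(\beta^{n-1}z)\,g_n(\beta^nz)$; if $z_0\neq0$ this makes $f$ vanish on the circle $|z|=|z_0|$, and if $z_0=0$ it forces a zero of order $\geq n$ at $0$ — in both cases $f\equiv0$, a contradiction. Hence $X_2=\{0\}$, so $\sigma_W(T)=\sigma_W(T_1)\subseteq\{|\mu|<r_0\}$, i.e. $\rho_W(T)\supseteq\{|\mu|\geq r_0\}$.

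The second step contradicts this by producing a point of $\sigma_W(T)$ of modulus $>r_0$, using the growth of $m_n(z)=m(z)m(\beta z)\cdots m(\beta^{n-1}z)$. Choose $\rho<1$ with $M_\rho>r_0$, possible since $M_1=\sup_{\rho<1}M_\rho$. As $t\mapsto\log|m(\rho e^{it})|$ lies in $L^1(0,2\pi)$ (Jensen's inequality), the pointwise (Birkhoff) ergodic theorem for the aperiodic rotation by $\beta$ yields a $\theta$ such that, setting $z:=\rho e^{i\theta}$, one has $|m_n(z)|^{1/n}\to M_\rho$ (in particular $m_n(z)\neq0$ for large $n$). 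Then the power series $h(w):=\sum_{n\geq0}m_n(z)w^{n+1}$ (with $m_0(z):=1$) has radius of convergence exactly $1/M_\rho\in(0,\infty)$, hence a singularity at some point $1/\mu_0$ with $|\mu_0|=M_\rho$; equivalently $\mu\mapsto h(1/\mu)=\sum_{n\geq0}m_n(z)\mu^{-n-1}$ is holomorphic on $\{|\mu|>M_\rho\}$ but singular at $\mu_0$. On the other hand, for $|\mu|>M_\rho$ we have $\mu\in\rho_W(T)\subseteq\rho(T)$ (by the first step, since $M_\rho>r_0$) and $r_\sigma(\tilde{T}_\rho)=M_\rho<|\mu|$ by Proposition~\ref{prop:7.1}~a), so $\mu\Id-\tilde{T}_\rho$ is injective on $A(\rho\D)$; restricting the identity $(\mu\Id-T)R_\mu e_0=e_0$ to $\rho\overline{\D}$, using the Neumann series for $(\mu\Id-\tilde{T}_\rho)^{-1}$, and evaluating at $z$ gives $(R_\mu e_0)(z)=\sum_{n\geq0}(T^ne_0)(z)\mu^{-n-1}=\sum_{n\geq0}m_n(z)\mu^{-n-1}=h(1/\mu)$. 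Thus $\mu\mapsto(R_\mu e_0)(z)$ coincides with $h(1/\mu)$ on $\{|\mu|>M_\rho\}$; but $\mu_0\in\{|\mu|\geq M_\rho\}\subseteq\{|\mu|>r_0\}\subseteq\rho_W(T)$, an open set, so $\mu\mapsto(R_\mu e_0)(z)$ is holomorphic near $\mu_0$ by Lemma~\ref{lem:6.2}, forcing $h(1/\mu)$ to continue holomorphically across $\mu_0$ — contradicting the singularity. This contradiction yields $r_0\T\cap\sigma_W(T)\neq\emptyset$, finishing the proof.

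I expect the second step to be the crux: one must convert the ergodic growth rate $|m_n(z)|^{1/n}\to M_\rho$ into a genuine singularity of the scalar resolvent function $\mu\mapsto(R_\mu e_0)(z)$ and then extract the contradiction by analytic continuation against the holomorphy of the Waelbroeck resolvent (Lemma~\ref{lem:6.2}), taking care that the identity $(R_\mu e_0)(z)=h(1/\mu)$ is legitimate on $\{|\mu|>M_\rho\}$ via injectivity of $\mu\Id-\tilde{T}_\rho$. By contrast, the upgrade ``$r_0\T\subseteq\rho_W(T)\Rightarrow\rho_W(T)\supseteq\{|\mu|\geq r_0\}$'' is a routine transposition of the Banach-space argument of Theorem~\ref{th:5.3} to the Fréchet setting, made available by the spectral decomposition Theorem~\ref{th:6.3}.
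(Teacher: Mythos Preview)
Your proof is correct and follows essentially the same strategy as the paper: spectral decomposition at radius $r_0$ (Theorem~\ref{th:6.3}), the zero-of-$m$ argument from Theorem~\ref{th:5.3} to force $X_2=\{0\}$, and a singularity of the Neumann series $\sum_{n\geq0} m_n(z)\mu^{-n-1}$ to manufacture a point of $\sigma_W(T)$ of modulus $>r_0$. The only differences are cosmetic: the paper runs the two contradictions in the opposite order (first showing $X_2\neq\{0\}$ via the singularity, then $X_2=\{0\}$ via $m(z_0)=0$), and it obtains the pointwise limit $|m_n(z)|^{1/n}\to M_r$ from Lemma~\ref{lem:5.5}~b) (after choosing $r$ with $m\neq0$ on $|z|=r$) rather than directly from Birkhoff's ergodic theorem as you do.
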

\begin{proof}
a) We already know from Proposition~\ref{prop:7.2}	 that $\sigma_W(T)\subset \overline{D}(0,M_1)$ if $M_1<\infty$.\\
b) Assume that there exists $\lambda_0\in\C$ such that $0<|\lambda_0|<M_1$, $\lambda_0\not\in \sigma_W(T)$. Then by Lemma~\ref{lem:7.4}, $\lambda_0\T\subset \rho_W(T)$.  By the spectral decomposition theorem, Theorem~\ref{th:6.3}, there exist closed subspaces $X_1,X_2$ of $\Hol(\D)$ such that $TX_j\subset X_j$ and, for $T_j=T_{|X_j}$, $j=1,2$, one has
\[   \sigma_W (T_1)   = \{    \lambda \in\sigma_W (T) :|\lambda|<|\lambda_0|  \} \mbox{ and }\sigma_W (T_2)   = \{    \lambda \in\sigma_W (T) :|\lambda|>|\lambda_0|  \} .    \]    
Assume that $X_2=\{ 0\}$. Then $X_1=X$ and $|\lambda|<|\lambda_0|$ for all $\lambda\in\sigma_W(T)$. Let $0<r<1$ such that 
$|\lambda_0|<M_r$ and such that $m(z)\neq 0$ whenever $|z|=r$. Consider the operator $\tilde{T}_r\in{\mathcal L}(A(r\D))$ from Proposition~\ref{prop:7.1}. Then $r_\sigma (\tilde{T}_r)=M_r$. For $|\lambda|>M_r$, 
\begin{equation}\label{eq:7.8}
g_\lambda (z):=((\lambda\Id -\tilde{T}_r)^{-1}e_0)(z)=\sum_{n\geq 0}\frac{m_n(z)}{\lambda^{n+1}}
\end{equation} 
defines a function $g_\lambda\in A(r\D)$. Moreover, for $|z|=r$, $\lim_{n\to\infty} |m_n(z)|^{1/n}=M_r$ by Lemma~\ref{lem:5.5}. Thus, for fixed $z\in\C$ with $|z|=r$, the series  (\ref{eq:7.8}) has a singular point $\lambda_1\in\C$ with $|\lambda_1|=M_r$. This means that the function $\lambda\mapsto g_\lambda (z)$ does not have a holomorphic extension to an open set $\Omega$ containing $\{ \lambda\in\C :|\lambda|>M_1  \}\cup \{ \lambda_1\}$. \\
We show that $\lambda_1\in \sigma_W(T)$. In fact, assume that $\lambda_1\in \rho_W(T)$. Then $\lambda\mapsto (R_\lambda e_0)(z)$ is holomorphic on $\rho_W(T)$ and coincides with $g_\lambda$ on $\{\lambda\in \C :|\lambda|>M_1\}$. This is a contradiction. So we have proved that $\lambda_1\in\sigma_W(T)$. This implies that $X_2\neq \{ 0\}$. Let $0\neq f\in X_2$. Since $T_2$ is bijective for all $n\in\N$, there exists $g_n\in X_2$ such that $f=T_2^n g_2$. Since $m$ has a zero $z_0$ in $\D$, this implies   $f=0$ as in the proof of Theorem~\ref{th:5.3}, a contradiction.      
	
\end{proof}

We want to give a concrete example where $M_1<M^*$ (see the beginning of Section~\ref{sec:7}). 
\begin{exam}\label{ex:7.6}
	Let 
	\begin{equation}\label{eq:7.9}
	m(z)=(1-z)\exp\left (   -\frac{1+z}{1-z}\right).
	\end{equation}
	Then $m\in A(\D)$. One has $M_1=|m(0)|=\frac{1}{e}$ since $m(z)\neq 0$ for all $z\in\D$. We claim that $M_1<M^*$. 
	Indeed, $|m(e^{it})|=|1-e^{it}|=2|\sin(t/2)|$, and thus 
	\begin{eqnarray*}
	 M^* & = & \exp \left(  \frac{1}{2\pi} \int_0^{2\pi} \log |2(\sin(t/2))|dt  \right)\\
		    & = & 2\exp \left(     \frac{1}{2\pi} \int_0^{\pi}\log |2(\sin(u))|2du\ \right)\\
		    & = & 2\exp \left(     \frac{2}{\pi} \int_0^{\pi/2}\log (\sin(\theta))d\theta \right)\\
		    & \geq & 2\exp \left(     \frac{2}{\pi} \int_0^{\pi/2}\log (2\theta /\pi))d\theta \right)\\
		    & = & 2\exp \left(     \frac{2}{\pi} \int_0^{1}\log (u)\frac{\pi}{2}du \right)\\
		    & = & \frac{2}{e},
\end{eqnarray*}	  
where we used the fact that $\sin (\theta)\geq \frac{2}{\pi} \theta$ on $[0,\pi/2]$ by concavity. Thus $M^*\geq \frac{2}{e}>\frac{1}{e}=M_1$. 
Consequently, for the function $m$ given by (\ref{eq:7.4}), 
\[  \sigma(T_{A(\D)})=D(0,M^*)\mbox{ whereas }\sigma(T)=D(0,1/e)   \]
is a smaller disc.  
If $M_1<|\lambda|\leq M^*$, then $(\lambda\Id -T)$ is invertible on $\Hol(\D)$ but 
\[   (\lambda\Id -T)^{-1}A(\D)\not\subset A(\D).  \]
For this function $m$ we also have 
\[  r_\sigma (T_{H^p(\D)})  =M^*.  \]
Thus 
\[   (\lambda\Id -T)^{-1} H^p(\D)\not\subset H^p(\D) ,\,\,\, 1\leq p\leq \infty,   \]
either when $M_1< \vert \lambda \vert \leq M^*$. 
\end{exam}
We do not know whether in Theorem~\ref{th:7.5}
\[   \sigma_W(T)=\overline{\sigma(T)}.\]
This is the case if $m$ is constant. If $m$ is not constant, it can happen that the spectrum itself coincides with the open unit disc of radius $M_1$. We give an example.
  \begin{exam}\label{ex:7.7}
  	Let $\beta\in\C$, $|\beta|=1$, $\beta^n \neq 1$ for all $n\in\N$, $m(z)=z$. 
  	Then $\sigma(T)=\{  \lambda\in\C : |\lambda|<1\}$. Note that $M_r=r$ for $r \in ]0;1[$. Thus $M_1=1$. 
  \end{exam} 
\begin{proof}
    Since $0 = m(0)$, we have $0 \in \sigma(T)$ (see Theorem \ref{th:2.1}).
    
    Now, let $\lambda \neq 0$ with $\vert \lambda \vert < 1 $. We claim that $\lambda \in \sigma(T)$. Indeed, assume that $\lambda \notin \sigma (T)$. Then, there exists $f \in \Hol(\D)$ such that
    \begin{equation}\label{eq:7.42}
        z f(\beta z) - \lambda f(z) = e_0(z) ~ (\forall z \in \D).
    \end{equation}
    Let's write $f(z) = \sum_{n \geqslant 0} a_n z^n$. Then, equation (\ref{eq:7.42}) becomes
    \[ \sum_{n \geqslant 0} \beta^n a_n z^{n+1} - \lambda \sum_{n \geqslant 0} a_n z_n = 1, \]
    which gives
    \[
    \left\{  
    \begin{array}{l}
    - \lambda a_0 = b_0 \\
    \forall n \in \N_0, ~ \beta^n a_n - \lambda a_{n+1} = 0
     \end{array}.
    \right.
    \]
    We now get $a_n = \frac{- \beta^{\frac{n(n-1)}{2}}}{\lambda^{n+1}}$ for all $n \in \N_0$. Hence, $\limsup_{n \rightarrow + \infty} \vert a_n \vert^{\frac{1}{n}} = \frac{1}{\vert \lambda \vert} >1$, thus providing a contradiction.
    
    We now have proved that $\left\{ \lambda \in \C ~:~ \vert \lambda \vert < 1 \right\} \subset \sigma(T)$. Furthermore, since $M_r = r$ for $0<r<1$ and $M_1 = 1$, we have by Proposition \ref{prop:7.2} (b) that $\sigma(T) \subset \left\{ \lambda \in \C ~:~ \vert \lambda \vert < 1 \right\}$.
\end{proof}	

\section{Weighted composition operators induced by elliptic automorphisms}\label{sec:8} 
In this concluding section we describe some of our results if the rotation is replaced by an arbitrary elliptic automorphism with a unique fixed point $\alpha\in\D$. This means that  we consider $\varphi:\D\to\D$ given by 
\[  \varphi(z)=\Psi_\alpha \circ r_\beta \circ \Psi_\alpha\]
where $\beta\in\T$, $\beta\neq 1$, $r_\beta(z)=\beta z$ and $\Psi_\alpha (z)=\frac{\alpha-z}{1-\overline{\alpha}z}=\Psi_\alpha^{-1}(z).$
Thus $\varphi$ is conjugated to the rotation $r_\beta$. 

Let $m\in\Hol(\D)$ and define 
\[ T:\Hol(\D)\to\Hol(\D)\mbox{ by }Tf=mf\circ\varphi.   \]
Let $U:\Hol(\D)\to\Hol(\D)$ be given by 
$  Uf=f\circ \Psi_\alpha.$
Then $U$ is an isomorphism and $U=U^{-1}$. Let 
$\tilde{T}=UTU^{-1}=UTU.$
Then 
\[  \tilde{T}f=\tilde{m} f\circ r_\beta \mbox{ where }\tilde{m}=m\circ \Psi_\alpha.\]
Thus $T$ is similar to a weighted composition operator induced by a rotation $\tilde{T}$ as considered before and consequently
\[  \sigma(T)=\sigma(\tilde{T}),\,\,\, \sigma_W(T)=\sigma_W(\tilde{T}).\]
We call $\varphi$ \emph{aperiodic} if $\varphi_n\neq \Id$ for all $n\in\N$, where $\varphi_n:=\varphi\circ \cdots \circ \varphi$ ($n$ times). This is equivalent to $\beta^n\neq 1$ for all $n\in\N$. 
Thus Theorem~\ref{th:7.3} and \ref{th:7.5} give the following result.
\begin{thm}\label{th:8.1}
Assume that $\varphi$ is aperiodic.
\begin{itemize}
\item[a)] If $m(z)\neq 0$ for all $z\in\D$, then 
\[\sigma_W(T)=\{\lambda\in\C:|\lambda|=|m(\alpha)|  \}.\]
\item[b)] If there exists $z_0\in\D$ such that $m(z_0)=0$, then there exists $0<R\leq \infty$ such that 
\[  \sigma_W(T)=\{ \lambda\in\C:|\lambda|\leq R\}\mbox{ if }R<\infty 
\mbox{ and }  \sigma_W(T)=\C\mbox{ otherwise}. \]
\end{itemize}
\end{thm}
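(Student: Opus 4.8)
The plan is to transport everything through the similarity set up at the beginning of this section. Recall that with $Uf=f\circ\Psi_\alpha$ (so $U=U^{-1}$) and $\tilde T=UTU$ one has $\tilde Tf=\tilde m\,f\circ r_\beta$ with $\tilde m=m\circ\Psi_\alpha$, and $\sigma_W(T)=\sigma_W(\tilde T)$. Since $\varphi$ is assumed aperiodic, $\beta^n\neq 1$ for all $n\in\N$, so $\tilde T$ is exactly a weighted composition operator induced by an aperiodic rotation of the kind studied in Section~\ref{sec:7}; moreover $\tilde m\in\Hol(\D)$ because $\Psi_\alpha$ is a holomorphic automorphism of $\D$. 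Thus the only work left is to translate the hypotheses on $m$ into hypotheses on $\tilde m$, using that $\Psi_\alpha$ maps $\D$ bijectively onto $\D$ and that $\Psi_\alpha(0)=\alpha$, $\Psi_\alpha\circ\Psi_\alpha=\Id$.

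For part~a): if $m(z)\neq 0$ for all $z\in\D$, then since $\Psi_\alpha(\D)=\D$ we get $\tilde m(z)=m(\Psi_\alpha(z))\neq 0$ for all $z\in\D$. Theorem~\ref{th:7.3}~b) applied to $\tilde T$ then yields $\sigma_W(\tilde T)=\{\lambda\in\C:|\lambda|=|\tilde m(0)|\}$, and $\tilde m(0)=m(\Psi_\alpha(0))=m(\alpha)$. Hence $\sigma_W(T)=\sigma_W(\tilde T)=\{\lambda\in\C:|\lambda|=|m(\alpha)|\}$.

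For part~b): if $m(z_0)=0$ for some $z_0\in\D$, put $w_0:=\Psi_\alpha(z_0)\in\D$; then $\tilde m(w_0)=m(\Psi_\alpha(w_0))=m(\Psi_\alpha(\Psi_\alpha(z_0)))=m(z_0)=0$, so $\tilde m$ has a zero in $\D$. Define
\[ R:=\sup_{0<r<1}\exp\left(\frac{1}{2\pi}\int_0^{2\pi}\log|\tilde m(re^{it})|\,dt\right)\in(0,\infty],\]
i.e. $R$ is the quantity "$M_1$" of Section~\ref{sec:7} attached to $\tilde m$; it is strictly positive since $\tilde m\not\equiv 0$, which makes each of these averages $>-\infty$ by Jensen's formula (equivalently by formula~(\ref{eq:7.2})). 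Theorem~\ref{th:7.5} applied to $\tilde T$ then gives $\sigma_W(\tilde T)=\overline D(0,R)$ if $R<\infty$ and $\sigma_W(\tilde T)=\C$ if $R=\infty$; since $\sigma_W(T)=\sigma_W(\tilde T)$ this is precisely the assertion.

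There is essentially no analytic obstacle: the substance is already contained in Sections~\ref{sec:6} and~\ref{sec:7}, and the present statement is a bookkeeping consequence of the conjugation $T\sim\tilde T$. The points that merely need to be recorded carefully are that the aperiodicity hypothesis is genuinely used (it enters via $\beta^n\neq 1$, which is part of the assumption on $\varphi$), that $\tilde m$ stays holomorphic on all of $\D$ (automatic, $\Psi_\alpha$ being an automorphism), and that $R>0$ so that the disc $\overline D(0,R)$ in part~b) is nondegenerate.
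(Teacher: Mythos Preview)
Your proposal is correct and follows exactly the route the paper indicates: the paper does not give a separate proof of Theorem~\ref{th:8.1} but simply states that it follows from Theorems~\ref{th:7.3} and~\ref{th:7.5} via the similarity $T\sim\tilde T$ set up at the beginning of Section~\ref{sec:8}. You have written out explicitly the translation of hypotheses (in particular $\tilde m(0)=m(\alpha)$ and the transfer of zeros through the involution $\Psi_\alpha$) and the verification that $R>0$, which is all that is required.
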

Concerning the point spectrum we merely note Proposition~\ref{prop:3.3} and \ref{prop:3.6} in the situation considered here.
\begin{thm}\label{th:8.2}
If $m(z_0)=0$ for some $z_0\in \D$, then $\sigma_p(T)=\emptyset$. This is always valid, no matter whether $\varphi$ is periodic or aperiodic.
\end{thm}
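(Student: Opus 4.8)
The plan is to reduce the statement to the rotation case already settled in Section~\ref{sec:3}. Recall from the discussion preceding Theorem~\ref{th:8.1} that $T$ is similar to the operator $\tilde T=UTU^{-1}$, where $Uf=f\circ\Psi_\alpha$ is an isomorphism of $\Hol(\D)$ with $U=U^{-1}$, and that $\tilde Tf=\tilde m\,(f\circ r_\beta)$ with $\tilde m=m\circ\Psi_\alpha$. Since similar operators have the same point spectrum, it suffices to prove $\sigma_p(\tilde T)=\emptyset$.

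The key observation is that $\tilde m$ again has a zero in the open disc $\D$. Indeed, $\Psi_\alpha$ is an automorphism of $\D$ with $\Psi_\alpha^{-1}=\Psi_\alpha$, so $w_0:=\Psi_\alpha(z_0)\in\D$ and $\tilde m(w_0)=m(\Psi_\alpha(w_0))=m(z_0)=0$. Hence $\tilde T$ is exactly a weighted composition operator induced by the rotation $r_\beta$ whose weight vanishes somewhere in $\D$.

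Now the conclusion follows by a case distinction on $\beta$, and here the only point needing a line of care is that the hypothesis $\beta\neq 1$ of Section~\ref{sec:8} really does split into the two cases covered earlier. If $\beta$ is a root of unity, then since $\beta\neq 1$ it has order $N\geq 2$, and Proposition~\ref{prop:3.3} applied to $\tilde T$ gives $\sigma_p(\tilde T)=\emptyset$. If instead $\beta^n\neq 1$ for all $n\in\N$, Proposition~\ref{prop:3.6} applied to $\tilde T$ yields the same conclusion. In either case $\sigma_p(T)=\sigma_p(\tilde T)=\emptyset$, which is the assertion; there is no serious obstacle, the proof being a two-step chain of the similarity invariance of $\sigma_p$ together with the already established rotation results.
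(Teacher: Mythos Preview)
Your argument is correct and is exactly the approach the paper intends: it states Theorem~\ref{th:8.2} as a direct consequence of Propositions~\ref{prop:3.3} and~\ref{prop:3.6} via the similarity $T\sim\tilde T$ set up at the beginning of Section~\ref{sec:8}. You have simply made explicit the routine verification that $\tilde m=m\circ\Psi_\alpha$ inherits a zero in $\D$ and that $\sigma_p$ is preserved under similarity.
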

 
\noindent \textbf{Acknowledgments:}  This research is partly supported by the B\'ezout Labex, funded by ANR, reference ANR-10-LABX-58.    
We are grateful to Professor Vasilescu for information on the spectral projection in Fr\'echet spaces.  

\bibliographystyle{abbrv}

\end{document}